\documentclass[a4paper,DIV=12]{scrartcl}

\usepackage[utf8]{inputenc}
\usepackage[T1]{fontenc}
\usepackage{lmodern}
\usepackage[svgnames,dvipsnames,rgb]{xcolor} 
\usepackage{amsfonts}
\usepackage{amsmath}
\usepackage{amssymb}
\usepackage{amsthm}
\usepackage{graphicx}
\usepackage{hyperref}
\definecolor{colR}{HTML}{CC6677}
\definecolor{colB}{HTML}{6699CC}
\colorlet{colG}{DarkSeaGreen}
\colorlet{colY}{Gold}
\hypersetup{linktocpage=true,colorlinks=true,linkcolor=colB!80!black,citecolor=DarkSeaGreen!80!black,urlcolor=colB!80!black}
\usepackage{float}
\usepackage{stackrel}
\usepackage{algorithm,algorithmicx,algpseudocode}

\usepackage{tikz,pgf}
\usetikzlibrary{external}
\tikzexternalize
\tikzsetexternalprefix{tikz/}
\usetikzlibrary{calc}
\usetikzlibrary{patterns}
\usetikzlibrary{backgrounds}
\usetikzlibrary{arrows.meta}
\usetikzlibrary{decorations.pathmorphing}
\usetikzlibrary{decorations.pathreplacing}
\usetikzlibrary{fadings}

\usepackage[nameinlink]{cleveref}
\usepackage{subcaption}


\newcommand{\union}[1]{\textsc{union}($#1$)}

\newtheorem{theorem}{Theorem}[section]
\newtheorem{lemma}[theorem]{Lemma}
\newtheorem{proposition}[theorem]{Proposition}
\newtheorem{corollary}[theorem]{Corollary}
\theoremstyle{definition}\newtheorem{definition}[theorem]{Definition}
\theoremstyle{definition}\newtheorem{remark}[theorem]{Remark}

\newtheoremstyle{mytheorem}{}{}{\itshape}{}{\bfseries}{}{ }{\thmname{#1}.\thmnote{ \textup{(#3)}}}
\theoremstyle{mytheorem}\newtheorem{definition*}[theorem]{Definition}

\newcommand{\Trel}{\triangle}
\newcommand{\Prel}{\square}
\newcommand{\TPrel}{\sim}
\newcommand{\constAngle}{\measuredangle_\text{const}\,}
\newcommand{\infConstAngle}{\measuredangle_\text{inf}\,}
\DeclareRobustCommand{\asc}[1]{\tikz[scale=0.25,baseline={(0,0)}]{\draw[](0, 0)--(1, 0)--(1, 1)--(0, 1)--cycle;\draw[](0, 1)--(1, 0);}(#1)}

\newcommand{\Pframework}{P-framework}
\newcommand{\Pframeworks}{P-frameworks}
\newcommand{\TPframework}{TP-framework}
\newcommand{\TPframeworks}{TP-frameworks}
\newcommand{\APclass}{angle-preserving class}
\newcommand{\APclasses}{angle-preserving classes}

\newcommand{\flexVec}[2]{\varphi_{#1,#2}}
\newcommand{\realizVec}[2]{\rho_{#1,#2}}
\newcommand{\piHalfRotation}{R}
\newcommand{\piHalfRotationMatrix}{\begin{pmatrix}
    0 & -1 \\
    1 & 0
\end{pmatrix}}

\newcommand{\Rot}[1]{\Theta\left(#1\right)}

\newcommand{\oriented}[2]{(#1,#2)}
\newcommand{\Goriented}{\vec{G}}
\newcommand{\circulation}[1]{\vec{#1}}

\newcommand{\eqwithreference}[1]{\underset{\text{\scriptsize\mbox{#1}}}{=}}

\newcommand{\cartProd}{\mathbin{\square}}

\newcommand{\sst}[2]{\left\lbrace #1\,\textbf{\upshape\textbar}\,#2 \right\rbrace} 
\newcommand{\transpose}{\intercal}

\newcommand{\blue}{\text{blue}}
\newcommand{\red}{\text{red}}

\newcommand{\NN}{\mathbb{N}}
\newcommand{\RR}{\mathbb{R}}

\newcommand{\QQ}{\mathbb{Q}}

\newcommand{\ttt}{\mathbf{t}}
\newcommand{\zerovec}{\mathbf{0}}

\newcommand{\Cn}{\mathcal{C}_n}

\newcommand{\CnAPclass}{$\Cn$-\APclass{}}
\newcommand{\CnAPclasses}{$\Cn$-\APclasses{}}
\newcommand{\CnSymmetric}{$\Cn$-symmetric}
\newcommand{\RotCn}{\Theta_{\omega}}
\newcommand{\CnRibbonPart}[1]{\rho_{#1}}

\colorlet{col1}{DarkSeaGreen}
\colorlet{col2}{colR}
\colorlet{col3}{colY!90!black}
\colorlet{col4}{colB}
\colorlet{col5}{BurlyWood!90!black}
\colorlet{col6}{MediumPurple!90!black}
\colorlet{col7}{black}
\colorlet{ncol}{DarkSeaGreen}
\colorlet{tauL}{col4}
\colorlet{tauR1}{col2}
\colorlet{tauR2}{col5}
\colorlet{colbg}{white}
\colorlet{colfg}{black}
\colorlet{colgraphv}{colfg!75!colbg}
\colorlet{colgraphe}{colfg!65!colbg}
\colorlet{colline}{colfg!50!colbg}

\tikzstyle{gvertex}=[circle, draw=colbg, fill=colgraphv, inner sep=0pt, minimum size=4pt]
\tikzstyle{smallgvertex}=[circle, draw=colbg, fill=colgraphv, inner sep=0pt, minimum size=2pt]

\tikzstyle{fvertex}=[circle,inner sep=0pt,minimum size=2.5pt,fill=colbg!80!colfg,draw=colgraphv,line width=1pt,outer sep=1pt]
\tikzstyle{smallfvertex}=[circle,inner sep=0pt,minimum size=1.5pt,fill=colbg!80!colfg,draw=colgraphv,line width=0.5pt,outer sep=0.5pt]

\tikzstyle{edge}=[line width=1.5pt,colgraphe]
\tikzstyle{smalledge}=[line width=0.75pt,colgraphe]
\tikzstyle{dedge}=[edge,-{Latex[width=3.45pt,length=5pt]}] 
\tikzstyle{nonedge}=[edge,dotted]
\tikzstyle{nonribbon}=[edge,dashed]
\tikzstyle{redge}=[edge,colR]
\tikzstyle{bedge}=[edge,colB]

\tikzstyle{hedge2}=[line width=2.5pt]
\tikzstyle{ledge}=[edge,opacity=0.25]
\tikzstyle{lgvertex}=[gvertex,opacity=0.25]
\tikzstyle{lfvertex}=[fvertex,opacity=0.25]

\tikzstyle{taur}=[line width=1.5pt,col3,-latex]
\tikzstyle{taurZero}=[circle,thick,draw=col3, fill=col3, inner sep=0pt, minimum size=4.5pt]

\tikzstyle{axes}=[colline,-latex]

\tikzstyle{brace}=[line width=1pt,colgraphe]
\tikzstyle{face}=[opacity=0.3]

\pgfdeclaredecoration{simple line}{start}
{
  \state{start}[width = +0pt,
                next state=step]{
    \pgfpathmoveto{\pgfpoint{0pt}{0pt}}
  }
  \state{step}[auto end on length    = 3pt,
               auto corner on length = 3pt,
               width=+1pt]
  {
    \pgfpathlineto{\pgfpoint{1pt}{0pt}}
  }
  \state{final}
  {}
}

\setlength{\parskip}{1ex plus 0.5ex minus 0.2ex}
\setlength{\parindent}{0ex}


\title{\vspace{-1cm}Flexibility and rigidity of frameworks \\consisting of triangles and parallelograms}
\author{Georg Grasegger\thanks{Johann Radon Institute for Computational and Applied Mathematics (RICAM), Austrian Academy of Sciences} \and
Jan Legerský\thanks{Department of Applied Mathematics, Faculty of Information Technology, Czech Technical University in Prague}}
\date{\vspace{-0.75cm}}
\renewcommand{\thefootnote}{\fnsymbol{footnote}}
\begin{document}

\maketitle
\footnotetext{This research was funded in whole, or in part, by the Austrian Science Fund (FWF) I6233. For the purpose of open access, the authors have applied a CC BY public copyright license to any Author Accepted Manuscript version arising from this submission.}
\footnotetext{The research was partially funded by the Czech Science Foundation (GAČR) project 22-04381L.}
\renewcommand{\thefootnote}{\arabic{footnote}}
\tikzexternaldisable
\begin{abstract}
A framework, which is a (possibly infinite) graph with a realization of its vertices in the plane,
is called flexible if it can be continuously deformed while preserving the edge lengths.
We focus on flexibility of frameworks in which 4-cycles form parallelograms.
For the class of frameworks considered in this paper (allowing triangles),
we prove that the following are equivalent: flexibility, infinitesimal flexibility,
the existence of at least two classes of an equivalence relation based on 3- and 4-cycles
and being a non-trivial subgraph of the Cartesian product of graphs.
We study the algorithmic aspects and the rotationally symmetric version of the problem.
The results are illustrated on frameworks obtained from tessellations by regular polygons.
\end{abstract}

\begin{figure}[ht]
    \centering
    \tikzfading[name=fade out,inner color=transparent!100,outer color=transparent!0]
    \tikzexternalenable
    \foreach \w in {-20,0,20}
    {
        \tikzsetnextfilename{333333-33434-33434-intro\w}
        \begin{tikzpicture}[scale=0.24]
            \begin{pgfonlayer}{background}
                \clip circle[radius=9.5cm];
            \end{pgfonlayer}
            \begin{scope}
            \clip circle[radius=9.5cm];
            \begin{scope}
            \foreach \y [evaluate=\y as \xs using {Mod(\y,2)/2-2},evaluate=\xs as \xse using \xs+3] in {-5,...,6}
            {
                \foreach \x in {\xs,...,\xse}
                {
                    \begin{scope}[shift={($\y*(90:1)+\y*0.5*(\w+60:1)+\y*0.5*(\w+120:1)+\x*2*(30:1)+\x*2*(-30:1)+\x*3*(\w:1)$)}]
                        \node[smallfvertex] (a0) at (0,0) {};
                        \foreach \r [evaluate=\r as \rr using 60*\r-30] in {1,2,...,6}
                        {
                            \node[smallfvertex] (a\r) at (\rr:1) {};
                            \draw[smalledge,colB]  (a0)edge(a\r);
                        }
                        \draw[smalledge,colB] (a1)edge(a2) (a2)edge(a3) (a3)edge(a4) (a4)edge(a5) (a5)edge(a6) (a6)edge(a1);
                        \node[smallfvertex] (b1) at ($(a1)+(\w:1)$) {};
                        \node[smallfvertex] (b2) at ($(a1)+(\w+60:1)$) {};
                        \node[smallfvertex] (b3) at ($(b1)+(b2)-(a1)$) {};
                        \draw[smalledge,colR] (a1)edge(b1) (a1)edge(b2) (b1)edge(b2) (b1)edge(b3) (b2)edge(b3);
                        \node[smallfvertex] (c1) at ($(a6)+(\w:1)$) {};
                        \node[smallfvertex] (c2) at ($(a6)+(\w-60:1)$) {};
                        \node[smallfvertex] (c3) at ($(c1)+(c2)-(a6)$) {};
                        \draw[smalledge,colR] (a6)edge(c1) (a6)edge(c2) (c1)edge(c2) (c1)edge(c3) (c2)edge(c3);
                        \node[smallfvertex] (d1) at ($(a5)+(\w-60:1)$) {};
                        \node[smallfvertex] (d2) at ($(a5)+(\w-120:1)$) {};
                        \node[smallfvertex] (d3) at ($(d1)+(d2)-(a5)$) {};
                        \draw[smalledge,colR] (a5)edge(d1) (a5)edge(d2) (d1)edge(d2) (d1)edge(d3) (d2)edge(d3);

                        \node[smallfvertex] (d2s) at ($(c1)+(30:1)$) {};
                        \draw[smalledge,colB] (b1)edge(c1) (b1)edge(d2s) (c1)edge(d2s);
                        \node[smallfvertex] (b2s) at ($(d1)+(-30:1)$) {};
                        \draw[smalledge,colB] (d1)edge(c2) (d1)edge(b2s) (c2)edge(b2s);
                        \begin{pgfonlayer}{background}
                            \fill[colB,face] (a1.center)--(a2.center)--(a3.center)--(a4.center)--(a5.center)--(a6.center)--cycle;
                            \fill[colB,face] (b1.center)--(c1.center)--(d2s.center)--cycle;
                            \fill[colB,face] (d1.center)--(c2.center)--(b2s.center)--cycle;
                            \foreach \bcd/\a in {b/1,c/6,d/5}
                            {
                                \fill[colR,face] (a\a.center)--(\bcd1.center)--(\bcd3.center)--(\bcd2.center)--cycle;
                            }
                        \end{pgfonlayer}

                    \end{scope}
                }
            }
            \end{scope}
            \end{scope}
            \fill[white,path fading=fuzzy ring 15 percent] circle[radius=10.2cm];
        \end{tikzpicture}
    }
    \tikzexternaldisable
\end{figure}

When we take a look at a scaffold from the front we see a rectangular grid with a certain amount of diagonal bars. Scaffolds are intended to be rigid. From the mathematical point of view such a construction can be considered as a graph where the bars of the scaffold form the edges and their joints are the vertices.

Such a graph together with a placement of the vertices in the plane or in space gives a framework.
A framework is called flexible if there is a non-trivial flex (a deformation of the placement preserving the distances between adjacent vertices that is not induced by a rigid motion). Otherwise it is called rigid.
In this paper, all placements are in the plane.

There is an amount of papers, see below, dealing with the question
where to put diagonal bars in a grid construction in order to make it rigid.
In this paper we go a step further and allow the grid to be more general.
In particular we allow non-rectangular parallelograms and triangles in the the underlying graph
that do not come from inserting a diagonal. We show when such a framework is flexible
and give an algorithm for finding a parametrization of a flex.
We apply the theory to rotationally symmetric frameworks and illustrate it on periodic tilings by regular polygons.

\minisec{Previous work}
Grid graphs can be realized as grids of squares.
Infinitesimal rigidity thereof has been studied by Bolker and Crapo~\cite{BolkerCrapo}
considering some of the squares being braced by adding diagonals (see \Cref{fig:grids} left).
Rigidity is related to the connectivity of another graph
which consists of vertices representing rows and columns of the grid and edges when two of them are connected by a brace.
This initial setting has been generalized in different ways.

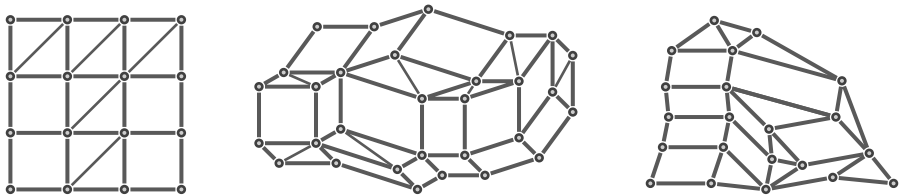
\begin{figure}[ht]
 \centering
 \begin{tikzpicture}[scale=0.75]
    \foreach \x [remember=\x as \xr (initially 1)] in {1,2,3,4}
    {
        \foreach \y [remember=\y as \yr (initially 1)] in {1,2,3,4}
        {
            \node[fvertex] (v\x\y) at (\x,\y) {};
            \ifnum\x>1
                \ifnum\y>1
                    \draw[edge] (v\x\yr)--(v\x\y);
                    \draw[edge] (v\x\y)--(v\xr\y);
                \else
                    \draw[edge] (v\x\y)--(v\xr\y);
                \fi
            \else
                \ifnum\y>1
                    \draw[edge] (v\x\yr)--(v\x\y);
                \fi
            \fi
        }
    }
    \draw[brace] (v13)--(v24);
    \draw[brace] (v23)--(v34);
    \draw[brace] (v33)--(v44);
    \draw[brace] (v22)--(v33);
    \draw[brace] (v21)--(v32);
 \end{tikzpicture}
 \qquad
 \begin{tikzpicture}[scale=0.75]
		\begin{scope}[rotate=-90]
			\node[fvertex] (0) at (0,0) {};
			\node[fvertex] (1) at (0,1) {};
			\node[fvertex,rotate around=0:(0)] (2) at ($(0)+(1.00,0)$) {};
			\node[fvertex] (3) at ($(2)+(1)-(0)$) {};
			\node[fvertex,rotate around=45:(2)] (4) at ($(2)+(0.50,0)$) {};
			\node[fvertex] (5) at ($(4)+(3)-(2)$) {};
			\node[fvertex,rotate around=120:(1)] (6) at ($(1)+(0.50,0)$) {};
			\node[fvertex] (7) at ($(6)+(3)-(1)$) {};
			\node[fvertex,rotate around=72:(7)] (8) at ($(7)+(1.50,0)$) {};
			\node[fvertex] (9) at ($(8)+(3)-(7)$) {};
			\node[fvertex] (10) at ($(5)+(9)-(3)$) {};
			\node[fvertex] (11) at ($(8)+(10)-(9)$) {};
			\node[fvertex] (12) at ($(6)+(8)-(7)$) {};
			\node[fvertex] (13) at ($(0)+(6)-(1)$) {};
			\node[fvertex,rotate around=144:(6)] (14) at ($(6)+(1.00,0)$) {};
			\node[fvertex] (15) at ($(14)+(13)-(6)$) {};
			\node[fvertex,rotate around=108:(6)] (16) at ($(6)+(1.00,0)$) {};
			\node[fvertex] (17) at ($(16)+(14)-(6)$) {};
			\node[fvertex] (18) at ($(12)+(16)-(6)$) {};
			\node[fvertex] (19) at ($(17)+(18)-(16)$) {};
			\node[fvertex,rotate around=90:(12)] (20) at ($(12)+(0.75,0)$) {};
			\node[fvertex] (21) at ($(20)+(8)-(12)$) {};
			\node[fvertex] (22) at ($(11)+(21)-(8)$) {};
			\node[fvertex] (23) at ($(18)+(20)-(12)$) {};
			\node[fvertex] (24) at ($(23)+(21)-(20)$) {};
			\node[fvertex] (25) at ($(19)+(23)-(18)$) {};
			\node[fvertex] (26) at ($(24)+(22)-(21)$) {};
			\node[fvertex] (27) at ($(25)+(24)-(23)$) {};
			\node[fvertex] (28) at ($(27)+(26)-(24)$) {};
			\node[fvertex] (29) at ($(25)+(28)-(27)$) {};
			\draw[edge] (0)--(1)
					(0)--(2) (2)--(3) (3)--(1)
					(2)--(4) (4)--(5) (5)--(3)
					(1)--(6) (6)--(7) (7)--(3)
					(7)--(8) (8)--(9) (9)--(3)
					(5)--(10) (10)--(9)
					(8)--(11) (11)--(10)
					(6)--(12) (12)--(8)
					(0)--(13) (13)--(6)
					(6)--(14) (14)--(15) (15)--(13)
					(6)--(16) (16)--(17) (17)--(14)
					(12)--(18) (18)--(16)
					(17)--(19) (19)--(18)
					(12)--(20) (20)--(21) (21)--(8)
					(11)--(22) (22)--(21)
					(18)--(23) (23)--(20)
					(23)--(24) (24)--(21)
					(19)--(25) (25)--(23)
					(24)--(26) (26)--(22)
					(25)--(27) (27)--(24)
					(27)--(28) (28)--(26)
					(25)--(29) (29)--(28)
					;
			\draw[brace] (27)edge(29) (3)edge(4) (1)edge(13) (7)edge(9) (12)edge(16) (20)edge(18) (23)edge(19);
		\end{scope}
 \end{tikzpicture}
 \qquad
 \begin{tikzpicture}[scale=0.8]
    \node[fvertex] (1) at (0,0) {};
    \node[fvertex] (2) at (1,0) {};
    \node[fvertex] (3) at (1.9,-0.1) {};
    \node[fvertex] (4) at (3,0.1) {};
    \node[fvertex] (5) at (4,0) {};

    \node[fvertex] (6) at ($(1)+(0.2,0.6)$) {};
    \node[fvertex] (7) at ($(2)+(6)-(1)$) {};
    \node[fvertex] (8) at ($(3)+(0.1,0.5)$) {};
    \node[fvertex] (9) at ($(3)+(0.6,0.4)$) {};
    \node[fvertex] (10) at ($(4)+(9)-(3)$) {};

    \node[fvertex] (11) at ($(6)+(8)-(3)$) {};
    \node[fvertex] (12) at ($(7)+(8)-(3)$) {};
    \node[fvertex] (13) at ($(8)+(-0.05,0.5)$) {};
    \node[fvertex] (14) at ($(10)+(13)-(9)$) {};

    \node[fvertex] (15) at ($(11)+(13)-(8)$) {};
    \node[fvertex] (16) at ($(12)+(13)-(8)$) {};
    \node[fvertex] (17) at ($(14)+(0.1,0.6)$) {};

    \node[fvertex] (18) at ($(15)+(17)-(14)$) {};
    \node[fvertex] (19) at ($(16)+(17)-(14)$) {};

    \node[fvertex] (20) at ($(19)+(0.4,0.3)$) {};
    \node[fvertex] (21) at ($(18)+(0.7,0.5)$) {};

    \draw[edge] (1)--(2) (2)--(3) (3)--(4) (4)--(5);
    \draw[edge] (1)--(6) (2)--(7) (3)--(7) (3)--(8) (3)--(9) (4)--(10) (5)--(10);
    \draw[edge] (6)--(7) (8)--(9) (9)--(10);
    \draw[edge] (6)--(11) (7)--(12) (8)--(12) (8)--(13) (9)--(13) (10)--(14);
    \draw[edge] (11)--(12) (13)--(14);
    \draw[edge] (11)--(15) (12)--(16) (13)--(16) (14)--(16);
    \draw[edge] (15)--(16) (16)--(14);
    \draw[edge] (15)--(18) (16)--(19) (10)--(17) (14)--(17);
    \draw[edge] (18)--(19) (19)--(17);
    \draw[edge] (19)--(20) (17)--(20);
    \draw[edge] (18)--(21) (19)--(21) (20)--(21);
 \end{tikzpicture}
 \caption{Different layouts of grids.}
 \label{fig:grids}
\end{figure}

Recently more general grids consisting of parallelograms attained interest (see \Cref{fig:grids} middle).
Such frameworks are for instance interesting in physics due to its relation with quasicrystals~\cite{Zhou2019}.
Rhombic tilings and their flexibility were found to be interesting for arts \cite{Wester}
and have later been formalized \cite{DuarteFrancis,GLbracing,power2021parallelogram}.
Again the connectivity of an auxiliary graph plays an important role.
Now the vertices represent so called ribbons which consist of parallel edges.
The connectivity was also related to NAC-colorings
which are colorings of the edges that determine whether a given graph admits a flexible placement in the plane \cite{GLbracing,flexibleLabelings}.
Frameworks, topologically equivalent to a disc, in which not all triangles come from bracing (see \Cref{fig:grids} right)
were mentioned on a poster (without proofs) by two undergraduate students Aiken and Gregov supervised by Whiteley~\cite{poster}.
They also state that their approach applies also for other simply connected topologies,
like the pages of a book,
which motivated us to formalize and generalize the notions and prove that the flexibility of frameworks with any simply connected topology
can be characterized combinatorially.

\minisec{Our contribution}
In this paper we define a class of frameworks that contains all frameworks mentioned above
as well as others (see \Cref{fig:newFramework}), called \emph{walk-independent frameworks}.
We consider both infinitesimal and continuous flexibility.

\begin{figure}[ht]
    \centering
    \begin{tikzpicture}
            \coordinate (h) at (0,1.3);
            \node[fvertex] (1) at (0,0) {};
            \node[fvertex] (2) at (1.3,0) {};
            \node[fvertex] (3) at (3.1,0.5) {};
            \node[fvertex] (4) at (0.58,1) {};
            \node[fvertex] (5) at ($(2)+(4)-(1)$) {};
            \node[fvertex] (6) at (2.66,1.12) {};
            \node[fvertex] (7) at ($(1)+(h)$) {};
            \node[fvertex] (8) at ($(2)+(h)$) {};
            \node[fvertex] (9) at ($(3)+(h)$) {};
            \node[fvertex] (10) at ($(4)+(h)$) {};
            \node[fvertex] (11) at ($(5)+(h)$) {};
            \node[fvertex] (12) at ($(6)+(h)$) {};
            \node[fvertex] (13) at (1.2,1.74) {};
            \node[fvertex] (14) at ($(10)+(13)-(7)$) {};
            \draw[edge] (1)edge(2) (4)edge(5) (7)edge(8) (10)edge(11) (7)edge(13) (8)edge(13) (10)edge(14) (11)edge(14);
            \draw[edge] (1)edge(4) (2)edge(3) (2)edge(5) (3)edge(5) (3)edge(6) (5)edge(6) (7)edge(10) (8)edge(9) (9)edge(11) (9)edge(12) (11)edge(12) (13)edge(14);
            \draw[edge] (1)edge(7) (2)edge(8) (3)edge(9) (4)edge(10) (6)edge(12);
    \end{tikzpicture}
    \caption{A framework whose flexibility can be described by the presented results.}
    \label{fig:newFramework}
\end{figure}
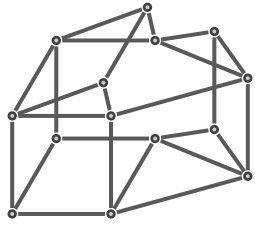

Instead of constructing an auxiliary graph whose connectivity would characterize flexibility of a given framework,
we partition the edges into equivalent classes where two
edges are in the same class whenever they are opposite in a parallelogram or
when they are part of the same triangle.
The classes have the property that any two edges in the same class keep their mutual angle during any flex,
hence, we call them \emph{\APclasses{}}.
Comparing to the previous cases, each row/column/ribbon is a subset of an \APclass{}
and two of them are connected by a path in the corresponding auxiliary graphs
if and only if they belong to the same \APclass{}. 
Namely, the disconnectedness of an auxiliary graph is equivalent to the number of \APclasses{} being at least two.

Our main result is that a walk-independent framework is flexible if and
only if it has at least two \APclasses{}.
We show how to construct a flex using \APclasses{}, give an algorithm to find the classes and analyze the algorithm's complexity.
The classes can be used also for determining which braces could be added to enforce rigidity:
two classes are merged when a diagonal of a 4-cycle intersecting both classes is added.
Similarly to the literature a relation to NAC-colorings can be established but
this paper can be read without knowing them.
The flexibility is also related to non-trivial subgraphs of the Cartesian product of graphs.
We define a class of graphs which can be seen simply connected
when considering 3- and 4-cycles as faces.
We show that every injective placement of such a graph with all 4-cycles being parallelograms is walk-independent,
hence our result applies.

The result is further extended to rotationally symmetric flexes which are related to respective symmetric versions of classes and colorings.
Finally we apply the theory to periodic tilings of the plane.

\minisec{Structure of the paper}
In the following paragraphs we define the important notions of rigidity and flexibility of graphs and frameworks.
The main results of the paper can be found in \Cref{sec:results}.
Generalizations to rotational symmetry of graphs and flexes are investigated in \Cref{sec:sym}
and the results are illustrated on periodic tessellations in \Cref{sec:tess}.

\minisec{Preliminaries}
In this section we collect necessary general definitions on rigidity and flexibility of graphs and frameworks. Further more specific definitions follow throughout the paper as needed.

Since all realizations of a disconnected graph are flexible,
we consider only connected graphs from now on.
\begin{definition*}
	Let $G=(V_G,E_G)$ be a simple undirected connected graph, possibly countably infinite.
	A map $\rho : V_G \rightarrow \RR^2$ such that $\rho(u) \neq \rho(v)$ for all edges $uv \in E_G$
	is a \emph{placement} or \emph{realization}.
	The pair $(G,\rho)$ is called a \emph{framework}.
	If $\rho$ is an injective realization such that each induced 4-cycle in $G$ forms a
	non-degenerate\footnote{Here by non-degenerate we mean that not all its vertices are collinear.}
	parallelogram in $\rho$, we call it a \emph{parallelogram placement}.
\end{definition*}

\begin{definition*}
	Two frameworks $(G,\rho)$ and $(G,\rho')$ are \emph{equivalent} if
	\begin{align*}
		\| \rho(u) - \rho(v) \| = \| \rho'(u) - \rho'(v)\|
	\end{align*}
	for all edges $uv \in E_G$.
	Two placements $\rho$ and $\rho'$ are \emph{congruent} if there exists a Euclidean
	isometry~$M$ of $\RR^2$ such that $M \rho'(v) = \rho(v)$ for all $v \in V_G$.
\end{definition*}

\begin{definition*}
	A \emph{flex} of a framework $(G,\rho)$ is a continuous path $t \mapsto \rho_t$,
	$t \in [0,\varepsilon)$ for $\varepsilon >0$,
	in the space of placements of $G$ such that $\rho_0= \rho$
	and each $(G,\rho_t)$ is equivalent to~$(G,\rho)$.
	The flex is called non-trivial if $\rho_t$ is non-congruent to $\rho$ for all $t \in (0,\varepsilon)$.

	We define a framework to be \emph{(continuously) flexible}
	if there is a non-trivial flex in $\RR^2$.
	Otherwise it is called \emph{rigid}.
\end{definition*}

\begin{definition*}
    An \emph{infinitesimal flex} of a framework  $(G,\rho)$ is a map $\varphi:V_G \rightarrow \RR^2$
    such that
    \[
        (\rho(u)-\rho(v))^\transpose\cdot(\varphi(u)-\varphi(v))=0
    \]
    for all edges $uv \in E_G$.
    It is called \emph{trivial} if it is induced by a rigid motion, namely, if
    there is a $2\times 2$ skew-symmetric matrix $A$ and $b\in\RR^2$ such that
    \[
        \varphi(v) = A \rho(v) + b
    \]
    for all $v\in V_G$.
    We say that $(G,\rho)$ is \emph{infinitesimally flexible}
    if there is a non-trivial infinitesimal flex, otherwise we call it \emph{infinitesimally rigid}.
\end{definition*}

\section{Results}\label{sec:results}
This section contains the main results of the paper on the relation between classes of edges, NAC-colorings and flexes.

In the first part of this section we define rigorously \emph{\APclasses{}}
and show that under certain flexibility conditions a graph needs to have more than one of them.
\emph{Walk-independence} is an important property of the frameworks we are considering.
We define this notion and prove that for walk-independent frameworks the existence of at least
two \APclasses{} yields a flex. We state the main theorem after recalling the Cartesian product of graphs
to which we relate \APclasses{} as well.
In \Cref{sec:comp} we analyze how to find the classes algorithmically.
In \Cref{sec:nac}, we show how the \APclasses{} can be connected to the NAC-colorings from \cite{flexibleLabelings}.
\Cref{sec:pframe} shows that the frameworks considered in \cite{GLbracing},
consisting of (braced) parallelograms only, are walk-independent.
In \Cref{sec:tpframe} we define (using certain simply connected simplicial complexes) a class of graphs for which all placements
with 4-cycles being parallelograms are walk-independent.

In \cite{GLbracing} so called ribbons defined sets of edges which keep their mutual angle during a possible motion. We extend this concept here to \APclasses.
\begin{definition}
    \label{def:APclass}
	Let $G$ be a connected graph. Consider the relation $\Trel$ on the set of edges, where
	two edges are in relation $\Trel$ if they are in a 3-cycle subgraph of $G$.
	Two edges are in relation $\Prel$ if they are opposite edges of a 4-cycle subgraph of $G$.
	An equivalence class of the reflexive-transitive closure $\TPrel$
	of the union $\Trel \cup \Prel$ is called an \emph{\APclass}.
    See \Cref{fig:eqclasses} for examples.
\end{definition}
The flexibility of a framework induces the existence of \APclasses{}.
\begin{figure}[ht]
    \centering
    \begin{tikzpicture}[scale=1.25]
        \node[gvertex] (1) at (1,0) {};
        \node[gvertex] (2) at (60:1) {};
        \node[gvertex] (3) at (0,0) {};
        \node[gvertex] (4) at ($(1)+(30:0.75)$) {};
        \node[gvertex] (5) at ($(2)+(4)-(1)$) {};
        \node[gvertex] (6) at ($(2)+(150:0.75)$) {};
        \node[gvertex] (7) at ($(3)+(6)-(2)$) {};
        \node[gvertex] (8) at ($(3)+(270:0.75)$) {};
        \node[gvertex] (9) at ($(1)+(8)-(3)$) {};
        \draw[edge,col1] (1)edge(2) (2)edge(3) (3)edge(1) (4)edge(5) (6)edge(7) (8)edge(9);
        \draw[edge,col5] (1)edge(4) (1)edge(9) (2)edge(5) (2)edge(6) (3)edge(7) (3)edge(8) (5)edge(6) (7)edge(8) (9)edge(4);
    \end{tikzpicture}
    \qquad
    \begin{tikzpicture}[scale=1]
        \node[gvertex] (1) at (1,0) {};
        \node[gvertex] (2) at (60:1) {};
        \node[gvertex] (3) at (0,0) {};
        \node[gvertex] (4) at ($(1)+(40:0.75)$) {};
        \node[gvertex] (5) at ($(2)+(4)-(1)$) {};
        \node[gvertex] (6) at ($(2)+(0,1)$) {};
        \node[gvertex] (7) at ($(2)+(140:0.75)$) {};
        \node[gvertex] (8) at ($(3)+(7)-(2)$) {};
        \node[gvertex] (9) at (-60:1) {};
        \draw[edge,col1] (1)edge(2) (1)edge(9) (2)edge(3) (3)edge(1) (4)edge(5) (3)edge(9) (7)edge(8);
        \draw[edge,col5] (1)edge(4) (2)edge(5) (6)edge(7);
        \draw[edge,col4] (2)edge(7) (3)edge(8) (5)edge(6);
    \end{tikzpicture}
    \qquad
    \begin{tikzpicture}[scale=1]
        \node[fvertex] (1) at (0,0) {};
        \node[fvertex] (2) at (1,0) {};
        \node[fvertex] (3) at (1.9,-0.1) {};
        \node[fvertex] (4) at (3,0.1) {};
        \node[fvertex] (5) at (4,0) {};

        \node[fvertex] (6) at ($(1)+(0.2,0.6)$) {};
        \node[fvertex] (7) at ($(2)+(6)-(1)$) {};
        \node[fvertex] (8) at ($(3)+(0.1,0.5)$) {};
        \node[fvertex] (9) at ($(3)+(0.6,0.4)$) {};
        \node[fvertex] (10) at ($(4)+(9)-(3)$) {};

        \node[fvertex] (11) at ($(6)+(8)-(3)$) {};
        \node[fvertex] (12) at ($(7)+(8)-(3)$) {};
        \node[fvertex] (13) at ($(8)+(-0.05,0.5)$) {};
        \node[fvertex] (14) at ($(10)+(13)-(9)$) {};

        \node[fvertex] (15) at ($(11)+(13)-(8)$) {};
        \node[fvertex] (16) at ($(12)+(13)-(8)$) {};
        \node[fvertex] (17) at ($(14)+(0.1,0.6)$) {};

        \node[fvertex] (18) at ($(15)+(17)-(14)$) {};
        \node[fvertex] (19) at ($(16)+(17)-(14)$) {};

        \node[fvertex] (20) at ($(19)+(0.4,0.3)$) {};
        \node[fvertex] (21) at ($(18)+(0.7,0.5)$) {};

        \draw[edge,col1] (1)--(2) (6)--(7) (11)--(12) (15)--(16) (18)--(19) (18)--(21) (19)--(20) (19)--(21) (19)--(17) (17)--(20) (20)--(21)
                         (14)--(16) (13)--(16) (13)--(14) (9)--(10) (3)--(4);
        \draw[edge,col1] (2)--(3) (1)--(6) (2)--(7) (3)--(7) (8)--(12);
        \draw[edge,col5] (4)--(5);
        \draw[edge,col5] (3)--(8) (3)--(9) (4)--(10) (5)--(10);
        \draw[edge,col5] (8)--(9) ;
        \draw[edge,col5] (6)--(11) (7)--(12)  (8)--(13) (9)--(13) (10)--(14);
        \draw[edge,col5] (11)--(15) (12)--(16)  ;
        \draw[edge,col5] (15)--(18) (16)--(19) (10)--(17) (14)--(17);
    \end{tikzpicture}
    \caption{Three graphs and their \APclasses{} indicated by colors.}
    \label{fig:eqclasses}
\end{figure}

\begin{proposition}
	\label{prop:flexImpliesTwoAPclasses}
	If $(G,\rho)$ is a flexible framework such that all induced 4-cycles are
	non-degenerate parallelograms, then $G$ has at least two \APclasses{}.
\end{proposition}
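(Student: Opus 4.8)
The plan is to argue by contraposition: assume $G$ has exactly one \APclass{} and show that every flex is trivial, i.e. $(G,\rho)$ is rigid. Fix a flex $t\mapsto \rho_t$ with $\rho_0=\rho$. The guiding idea is that an \APclass{} is precisely a set of edges whose pairwise angles are frozen during the flex, so a single class forces the whole framework to move as one rigid body. First I would make the angle-preservation property precise: for an edge $uv$ with $\rho_t(u)\neq\rho_t(v)$ for all $t$ in the relevant interval (which holds near $t=0$ since $\rho$ is a placement, so edge lengths are positive constants along the flex), write its direction as a unit vector. I would check the base cases of the relation: (a) two edges opposite in a 4-cycle that stays a parallelogram keep their difference vectors equal, hence keep the same direction; (b) two edges sharing a triangle have all three edge lengths constant, so by SSS the triangle is rigid and the angle between the two edges is constant. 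A small subtlety for (a) is that a 4-cycle that is a non-degenerate parallelogram at $t=0$ might a priori degenerate or ``flip'' during the flex; I would note that since the two pairs of opposite edges have constant (equal, nonzero) lengths and the diagonals... actually the cleanest route is: the two opposite difference vectors $\rho_t(b)-\rho_t(a)$ and $\rho_t(c)-\rho_t(d)$ have equal constant length and the remaining edge $ad$, $bc$ also have equal constant length; one shows the ``parallelogram or anti-parallelogram'' alternative and rules out the anti-parallelogram configuration locally by continuity from the non-degenerate parallelogram at $t=0$. I expect this continuity-of-configuration argument to be the main obstacle and the place where the non-degeneracy hypothesis is genuinely used.

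Next I would propagate this: define, for each edge $e=uv$, the angle $\theta_t(e)$ of the unit direction vector (choosing orientations once and for all). The base cases show $\theta_t(e)-\theta_t(e')$ is constant in $t$ whenever $e\mathbin{\Trel}e'$ or $e\mathbin{\Prel}e'$, and this difference-is-constant property is clearly preserved under the reflexive-transitive closure $\TPrel$. Hence if $G$ has a single \APclass{}, there is a single function $\alpha(t)$ (say $\theta_t(e_0)-\theta_0(e_0)$ for a fixed reference edge $e_0$) such that $\theta_t(e)=\theta_0(e)+\alpha(t)$ for every edge $e$ simultaneously. In other words, at time $t$ every edge direction is obtained from its direction at time $0$ by the same rotation $\Rot{\alpha(t)}$.

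Finally I would conclude rigidity. Since $G$ is connected, fix a spanning tree $T$ rooted at a vertex $v_0$. For any vertex $v$, take the $T$-path $v_0=w_0,w_1,\dots,w_k=v$; then
\[
\rho_t(v)=\rho_t(v_0)+\sum_{i=1}^{k}\bigl(\rho_t(w_i)-\rho_t(w_{i-1})\bigr)
=\rho_t(v_0)+\sum_{i=1}^{k}\ell_i\,\Rot{\alpha(t)}\,u_i ,
\]
where $\ell_i>0$ is the (constant) length of $w_{i-1}w_i$ and $u_i$ is its unit direction at time $0$, using that each edge direction rotates by the common angle $\alpha(t)$. Comparing with $t=0$ gives $\rho_t(v)-\rho_t(v_0)=\Rot{\alpha(t)}\bigl(\rho_0(v)-\rho_0(v_0)\bigr)$ for all $v$, so $\rho_t$ is obtained from $\rho_0$ by the rotation $\Rot{\alpha(t)}$ about a moving center followed by a translation — a rigid motion. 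Hence $\rho_t$ is congruent to $\rho$ for all $t$, the flex is trivial, and $(G,\rho)$ is rigid, completing the contrapositive. I would remark that the choice of edge orientations does not matter since reversing an edge shifts its angle by $\pi$, a constant, which is absorbed into the constants above; and that the whole argument only ever uses edges, so it is insensitive to which 3- and 4-cycles are ``induced.''
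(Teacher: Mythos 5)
Your proof is correct and follows essentially the same route as the paper: both rest on the observation that edges related by $\Trel$ or $\Prel$ keep a constant mutual angle along the flex (SSS for triangles, the parallelogram/anti-parallelogram dichotomy plus continuity for induced 4-cycles), so a single \APclass{} forces all edge directions to rotate in unison and the flex to be trivial near $t=0$ --- a step the paper asserts in one line and you prove explicitly via the spanning tree. The only point to tighten is your closing remark: $\Prel$ relates opposite edges of \emph{any} 4-cycle subgraph, and for a non-induced 4-cycle the hypothesis gives you no parallelogram, so the argument is not ``insensitive'' to inducedness; rather, such a cycle has a chord, hence decomposes into two triangles, and is covered by your case (b).
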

\begin{proof}
	Let $\rho_t$ be a non-trivial flex of $(G,\rho)$ with $t\in [0, \varepsilon)$.
	We define the following relation on $E_G$:
	edges $uv$ and $wz$ of $G$ are in relation $\constAngle$ if there is $\varepsilon' \in (0, \varepsilon)$
	such that the angle between
	the lines $\rho_t(u)\rho_t(v)$ and $\rho_t(w)\rho_t(z)$ is the same for all $t\in [0, \varepsilon')$.
	Obviously, the relation is an equivalence.

	Since the parallelograms given by induced 4-cycles in the realization $\rho=\rho_0$ are non-degenerate,
	the opposite edges are parallel in $\rho_t$ for all $t$ in some interval $[0,\varepsilon')$.
	Hence, they belong to the same equivalence class of $\constAngle$.
	Clearly, all edges in a 3-cycle are in one equivalence class as well.
	Therefore, an \APclass{} is a subset of an equivalence class of $\constAngle$.

	Since the flex is non-trivial, there are at least two edges
	whose angle is not constant along the flex,
	hence, $\constAngle$ has at least two classes.
	Thus, there are at least two \APclasses{} in $G$.
\end{proof}

A similar statement holds for infinitesimal flexibility.
\begin{proposition}
	\label{prop:infFlexImpliesTwoAPclasses}
	If $(G,\rho)$ is an infinitesimally flexible framework such that all induced 4-cycles are
	non-degenerate parallelograms and all 3-cycles are non-degenerate triangles, then $G$ has at least two \APclasses{}.
\end{proposition}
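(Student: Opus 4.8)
The plan is to run the infinitesimal analogue of the proof of \Cref{prop:flexImpliesTwoAPclasses}, with the instantaneous angular velocity of an edge along a genuine flex replaced by the scalar that an infinitesimal flex assigns to that edge. Let $\varphi$ be a non-trivial infinitesimal flex of $(G,\rho)$ and write $\piHalfRotation=\piHalfRotationMatrix$ for the rotation by $\pi/2$. For an edge $uv\in E_G$, the condition $(\rho(u)-\rho(v))^{\transpose}(\varphi(u)-\varphi(v))=0$ says that $\varphi(v)-\varphi(u)$ is orthogonal to the nonzero vector $\rho(v)-\rho(u)$, so there is a unique $\omega_{uv}\in\RR$ with $\varphi(v)-\varphi(u)=\omega_{uv}\,\piHalfRotation(\rho(v)-\rho(u))$. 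First I would check that $\omega_{uv}$ does not depend on the orientation chosen for the edge (interchanging $u$ and $v$ negates both sides), so that $\omega$ is a well-defined function $E_G\to\RR$; intuitively, $\omega_{uv}$ records how fast the edge $uv$ rotates.

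The core step is to show that $\omega$ is constant on each \APclass{}. For a $3$-cycle $uvw$, summing the three defining identities around the cycle and applying $\piHalfRotation^{-1}$ gives
\[
  \omega_{uv}(\rho(v)-\rho(u))+\omega_{vw}(\rho(w)-\rho(v))+\omega_{wu}(\rho(u)-\rho(w))=\zerovec .
\]
Since the triangle is non-degenerate, any two of these edge vectors are linearly independent, which forces $\omega_{uv}=\omega_{vw}=\omega_{wu}$. For an induced $4$-cycle $u\text{-}v\text{-}w\text{-}z$, the same summation together with the parallelogram identities $\rho(v)-\rho(u)=\rho(w)-\rho(z)$ and $\rho(w)-\rho(v)=\rho(z)-\rho(u)$ yields
\[
  (\omega_{uv}-\omega_{wz})(\rho(v)-\rho(u))+(\omega_{vw}-\omega_{zu})(\rho(w)-\rho(v))=\zerovec ,
\]
and non-degeneracy of the parallelogram (the two edge vectors are linearly independent) gives $\omega_{uv}=\omega_{wz}$ and $\omega_{vw}=\omega_{zu}$; a non-induced $4$-cycle carries a chord and hence splits into two non-degenerate $3$-cycles, so the $3$-cycle case already forces all of its edges (cycle edges plus chord) to carry the same value of $\omega$, in particular the two opposite pairs. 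Thus $\omega$ is invariant under $\Trel$ and under $\Prel$, hence under $\TPrel$, so it is constant on each \APclass{}.

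Finally, suppose for contradiction that $G$ has only one \APclass{}. Then $\omega\equiv c$ for some $c\in\RR$, and the map $\psi(v):=\varphi(v)-c\,\piHalfRotation\rho(v)$ satisfies $\psi(v)-\psi(u)=\zerovec$ for every edge $uv$; since $G$ is connected, $\psi$ is a constant $b\in\RR^2$, so $\varphi(v)=c\,\piHalfRotation\rho(v)+b$ for all $v$. As $c\,\piHalfRotation$ is skew-symmetric, $\varphi$ is a trivial infinitesimal flex, contradicting the choice of $\varphi$; hence $G$ has at least two \APclasses{}. I do not expect a genuine obstacle here: the only points to watch are performing the cycle-closing sums with consistent edge orientations and recording that the scalar multiples of $\piHalfRotation$ are precisely the $2\times 2$ skew-symmetric matrices, which turns the last step into bookkeeping rather than an argument.
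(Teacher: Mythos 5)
Your proof is correct, and it reorganizes the argument in a way that is genuinely simpler than the paper's. The paper works with a binary relation $\infConstAngle$ on edges, which requires a case split according to whether the two realization vectors are linearly independent, an explicit verification of transitivity in four cases, and, in the parallelogram step, a further case analysis on which of the difference vectors $\varphi(u)-\varphi(v)$ vanish. You instead observe that every single edge $uv$ carries a well-defined rotation rate $\omega_{uv}$ (because $\rho(u)\neq\rho(v)$ forces $\varphi(v)-\varphi(u)$ into the one-dimensional orthogonal complement of $\rho(v)-\rho(u)$), so "same class" becomes "same value of a function $E_G\to\RR$" and all of the relation-theoretic bookkeeping disappears; your cycle-summation identities for triangles and parallelograms absorb the zero-vector cases automatically since $\omega_{uv}=0$ is just another scalar value. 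Your handling of non-induced $4$-cycles via the chord and the hypothesis that all $3$-cycles are non-degenerate is exactly what is needed, since $\Prel$ is defined on arbitrary $4$-cycle subgraphs while only induced ones are assumed to be parallelograms. The endgame also differs: the paper normalizes the flex so that one edge is fixed and walks along a path to a vertex with nonzero velocity, whereas you subtract the candidate trivial flex $c\,\piHalfRotation\rho(\cdot)+b$ directly and invoke connectedness; both are valid, but yours makes the contradiction with non-triviality immediate. In short, the two proofs compute the same quantity, but your packaging of it as a scalar edge invariant buys a substantially shorter and less error-prone argument.
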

\begin{proof}
    Suppose we have an infinitesimal flex $\varphi$ which is not induced by a rigid motion.
    We proceed in a similar manner as in \Cref{prop:flexImpliesTwoAPclasses},
    just we want to consider edges to be equivalent if the infinitesimal change of their angle is zero. Let
    \[
        \realizVec{u}{v} = \rho(u)-\rho(v) \text{ and } \flexVec{u}{v} = \varphi(u)-\varphi(v)
    \]
    for $u,v \in V_G$.
    To motivate the following, imagine for a moment that the infinitesimal flex
    $\varphi$ comes from a continuous flex $\rho_t$.
    The angle between $uv$ and $u'v'$ in the continuous flex
    is a function of $(\rho_t(u)-\rho_t(v))^\transpose(\rho_t(u')-\rho_t(v'))$.
    Setting the derivative at $t=0$ to zero gives
    \begin{equation}
        \label{eq:difAngle}
        \realizVec{u}{v}^\transpose\cdot\flexVec{u'}{v'} + \flexVec{u}{v}^\transpose\cdot\realizVec{u'}{v'}=0\,.
    \end{equation}
    The situation when $\realizVec{u}{v}$ and $\realizVec{u'}{v'}$ are linearly dependent
    has to be treated carefully, hence, let us assume for a moment that they are linearly independent (LI). Since $\varphi$ is an infinitesimal flex of $\rho$, we have $\realizVec{u}{v}^\transpose\cdot\flexVec{u}{v}=0$ and $\realizVec{u'}{v'}^\transpose\cdot\flexVec{u'}{v'}=0$
    or equivalently
    \[
        \flexVec{u}{v} = \alpha \piHalfRotation \realizVec{u}{v}
        \quad\text{ and }\quad
        \flexVec{u'}{v'} = \beta \piHalfRotation \realizVec{u'}{v'},
        \text{ where } \piHalfRotation=\piHalfRotationMatrix,
    \]
    for some $\alpha,\beta \in \RR$.
    It can be checked that \Cref{eq:difAngle} is satisfied if and only if $\alpha=\beta$.
    Therefore, we define the relation $\infConstAngle$ on the edge set $E_G$ as follows:
    \begin{align*}
        uv \infConstAngle u'v' \iff
            &\left(\realizVec{u}{v}, \realizVec{u'}{v'} \text{ are LI} \land
                \exists \alpha\in\RR:
                    \flexVec{u}{v} = \alpha \piHalfRotation \realizVec{u}{v}
                    \land
                    \flexVec{u'}{v'} = \alpha \piHalfRotation \realizVec{u'}{v'}
            \right)\\
            &\lor \left(
                \exists \beta\in\RR\setminus\{0\}:
                    \realizVec{u}{v} = \beta \realizVec{u'}{v'}
                    \land
                    \flexVec{u}{v} = \beta \flexVec{u'}{v'}
            \right).
    \end{align*}
    Notice that the second part of the right hand side deals with the situation when
    $\realizVec{u}{v}$ and $\realizVec{u'}{v'}$ are linearly dependent.
    Since adjacent vertices are mapped to distinct points, it makes sense to assume $\beta\neq 0$.
    The relation is well-defined in the sense that swapping $u$ and $v$
    does not change the logical value of the right hand side.
    The relation is clearly reflexive and symmetric.
    We show that it is transitive as well:
    let $uv, u'v', u''v''\in E_G$ be such that $uv \infConstAngle u'v'$ (this gives $\alpha$ xor $\beta$)
    and $u'v' \infConstAngle u''v''$ (this gives $\alpha'$ xor $\beta'$).
    We need to find corresponding $\alpha''$ xor $\beta''$
    to prove that $uv \infConstAngle u''v''$.
    To do so we distinguish four cases:
    \begin{labeling}{$\alpha,\alpha'$:}
        \item[$\alpha,\alpha'$:] We have $\alpha=\alpha'$.
            If $\realizVec{u}{v}$ and $\realizVec{u''}{v''}$
            are LI, we are done by setting $\alpha''=\alpha$. Otherwise,
            $\realizVec{u}{v} = \beta'' \realizVec{u''}{v''}$ and we have
            $\flexVec{u}{v} = \alpha \piHalfRotation\realizVec{u}{v}
                = \alpha' \beta'' \piHalfRotation\realizVec{u''}{v''}=\beta'' \flexVec{u''}{v''}$.
        \item[$\beta,\beta'$:] We get $\beta''=\beta\beta'$.
        \item[$\alpha,\beta'$:] Since $\realizVec{u''}{v''}=\frac{1}{\beta'}\realizVec{u'}{v'}$,
        we have that $\realizVec{u}{v}$ and $\realizVec{u''}{v''}$ are LI.
        Now $\flexVec{u''}{v''}=\frac{1}{\beta'}\flexVec{u'}{v'}
            =\alpha \piHalfRotation \frac{1}{\beta'}\realizVec{u'}{v'}=\alpha \piHalfRotation \realizVec{u''}{v''}$.
        Namely, we set $\alpha''=\alpha$.
        \item[$\beta,\alpha'$:] Follows from the previous and symmetry.
    \end{labeling}
    Next we show that every \APclass{} is contained in an equivalence class of $\infConstAngle$.
    Let $uv,uw$ be two incident edges in a 3-cycle, which is non-degenerate by assumption.
    Hence, $\realizVec{u}{v}, \realizVec{u}{w}$ are linearly independent.
    Let $\alpha$ be such that $\flexVec{u}{v}=\alpha \piHalfRotation\realizVec{u}{v}$.
    Then
    \begin{align*}
        \flexVec{u}{w}&=\flexVec{u}{v} + \flexVec{v}{w}
            =\alpha \piHalfRotation\realizVec{u}{v}  + \flexVec{v}{w}
            =\alpha \piHalfRotation\realizVec{u}{w} + \alpha \piHalfRotation\realizVec{w}{v}+ \flexVec{v}{w} = \alpha \piHalfRotation\realizVec{u}{w} + \gamma \flexVec{v}{w}
    \end{align*}
    for some $\gamma\in \RR$.
    If $\flexVec{v}{w} =(0,0)$, we have $uv\infConstAngle uw$.
    Otherwise multiplying the equation from the left by $\realizVec{u}{w}^\transpose$
    gives
    \begin{equation*}
        \underbrace{\realizVec{u}{w}^\transpose\cdot\flexVec{u}{w}}_{=0}= {\alpha \underbrace{\realizVec{u}{w}^\transpose\piHalfRotation\realizVec{u}{w}}_{=0}} + {\gamma \realizVec{u}{w}^\transpose\cdot\flexVec{v}{w}}\,.
    \end{equation*}
    Then $\gamma=0$ since $\realizVec{u}{w}^\transpose\cdot \flexVec{v}{w}\neq 0$ as $\realizVec{u}{v}, \realizVec{u}{w}$ are linearly independent and $\realizVec{v}{w}^\transpose\cdot \flexVec{v}{w}=0$.
    Hence, the edges are in the same equivalence class.

    Now, let $(u,v,v',u',u)$ be a 4-cycle.
    We have $\realizVec{u}{v}=\realizVec{u'}{v'}$ and $\realizVec{u}{u'}=\realizVec{v}{v'}$.
    Thus, $\flexVec{u}{v},\flexVec{u'}{v'}$, respectively $\flexVec{u}{u'},\flexVec{v}{v'}$, are linearly dependent and without loss of generality we can assume that $\flexVec{u}{v}=\gamma\flexVec{u'}{v'}$ and $\flexVec{u}{u'}=\varepsilon\flexVec{v}{v'}$.
    To prove that $uv\infConstAngle u'v'$ and $uu'\infConstAngle vv'$, we need to show that $\gamma$
    and $\varepsilon$ can be set to 1.
    We have
    \begin{align*}
        \flexVec{u}{v}&=\gamma(\flexVec{u'}{u}+\flexVec{u}{v'})=-\gamma\varepsilon\flexVec{v}{v'}+\gamma\flexVec{u}{v'}
            =-\gamma\varepsilon\flexVec{v}{v'}+\gamma(\flexVec{u}{v}+\flexVec{v}{v'})\\
        \iff (0,0)&=(\gamma-1)\flexVec{u}{v}+\gamma(1-\varepsilon)\flexVec{v}{v'}\,.
    \end{align*}
    If $\flexVec{u}{v},\flexVec{v}{v'}$ are non-zero, then they are linearly independent
    since they are orthogonal to $\realizVec{u}{v}$ and $\realizVec{v}{v'}$ and the 4-cycle is non-degenerate.
    Hence, $\gamma=\varepsilon=1$ which
    yields $uv\infConstAngle u'v'$ and $uu'\infConstAngle vv'$.
    If $\flexVec{u}{v},\flexVec{v}{v'}$ are both zero,
    then $\varphi(u)=\varphi(v)=\varphi(v')$. Hence, $\flexVec{u}{u'}=\flexVec{v}{u'}$,
    but this is orthogonal to both $\realizVec{u}{u'}$ and $\realizVec{v}{u'}$
    which are linearly independent by the assumption that the 4-cycles are non-degenerate.
    Therefore, $\flexVec{u}{u'}=0$ and the statement follows.
    If $\flexVec{u}{v}\neq (0,0)= \flexVec{v}{v'}$, then $\gamma=1$ and $\varepsilon$
    can be 1 as $\flexVec{u}{u'}=\varepsilon\flexVec{v}{v'}=(0,0)$.
    Finally, if $\flexVec{u}{v}= (0,0) \neq \flexVec{v}{v'}$, then
    $\varphi(u)=\varphi(v)$ which yields
    \[
        \flexVec{u'}{v'}=\flexVec{u'}{u}+\flexVec{v}{v'} = (-\varepsilon+1)\flexVec{v}{v'}\,.
    \]
    Multiplying the equation by $\realizVec{u'}{v'}^\transpose$ from the left implies $\varepsilon=1$,
    since $\flexVec{v}{v'}$ cannot be orthogonal to $\realizVec{u'}{v'}$ as this would imply that
    $\realizVec{u'}{v'}$ and $\realizVec{v}{v'}$ are linearly dependent which is not possible by the assumption.
    Then $\varphi(u')=\varphi(v')$ and $\gamma$ can be set to 1 as well.

    To conclude the statement, we just need to show that $\infConstAngle$
    has at least two equivalence classes.
    Since the infinitesimal flex $\varphi$ is not induced by a rigid motion,
    we can assume that $\varphi(\bar{u})=\varphi(\bar{v})=(0,0)$ and $\varphi(\bar{w})\neq (0,0)$
    for some edges $\bar{u}\bar{v},\bar{v}\bar{w}$.
    Namely, by applying a rigid motion to $\rho$, we can assume that
    $\rho(u)=(0,0)$ and $\rho(v)=(\lambda, 0)$ for an edge~$uv$, where ${\lambda=||\rho(u)-\rho(v)||\neq 0}$.
    Let $\varphi$ be a non-trivial infinitesimal flex of $\rho$.
    Since $\flexVec{u}{v}$ is orthogonal to $\realizVec{u}{v}$,
    we have $\flexVec{u}{v}=(0,y)$.
    By subtracting the trivial infinitesimal flex $-\frac{y}{\lambda}\piHalfRotation\rho(\cdot)+\varphi(u)$
    from~$\varphi$, we get a non-trivial infinitesimal flex $\varphi'$ such that $\varphi'(u)=\varphi'(v)=(0,0)$.
    Since the flex is non-trivial, there is a vertex $w$ such that $\varphi'(w)\neq (0,0)$.
    By connectivity of $G$, there is a path from $v$ to $w$
    and therefore there are two consecutive edges $\bar{u}\bar{v},\bar{v}\bar{w}$ with the required property.

    Finally, we check that $\bar{u}\bar{v}$ is not
    in relation with $\infConstAngle$ with $\bar{v}\bar{w}$:
    suppose first that $\realizVec{\bar{u}}{\bar{v}}$ and~$\realizVec{\bar{v}}{\bar{w}}$ are linearly independent.
    Then we have $(0,0)=\flexVec{\bar{u}}{\bar{v}}=\alpha\piHalfRotation\realizVec{\bar{u}}{\bar{v}}$.
    Since $\realizVec{\bar{u}}{\bar{v}}\neq (0,0)$ as $\bar{u}\bar{v}\in E_G$, it is necessary that $\alpha=0$.
    But the equality $\flexVec{\bar{v}}{\bar{w}}=\alpha\piHalfRotation\realizVec{\bar{v}}{\bar{w}}=(0,0)$
    does not hold as $\flexVec{\bar{v}}{\bar{w}}=-\varphi(\bar{w})\neq(0,0)$.
    On the other hand, if $\realizVec{\bar{u}}{\bar{v}}$ and $\realizVec{\bar{v}}{\bar{w}}$ are linearly dependent,
    i.e., $\realizVec{\bar{u}}{\bar{v}}=\beta\realizVec{\bar{v}}{\bar{w}}$ for some $\beta$
    which is non-zero since the vectors are non-zero by the fact that $\bar{u}\bar{v},\bar{v}\bar{w}\in E_G$,
    we have that $\beta \flexVec{\bar{v}}{\bar{w}}=-\beta \varphi(\bar{w})\neq (0,0)=\flexVec{\bar{u}}{\bar{v}}$.
\end{proof}

The following definition provides an important property for frameworks.
For a (closed) walk $W=(u_1,\ldots,u_k)$ and an \APclass{} $r$,
the notation $\sum_{\oriented{u}{v} \in r\cap W}$ means
we sum up over all edges $(u_i,u_{i+1})$ with $1\leq i <k$ such that $u_i u_{i+1}\in r$
in the following.
\begin{definition}
    We say that a framework $(G,\rho)$ is \emph{walk-independent},
    if $\rho$ is a parallelogram placement of $G$ and for every \APclass{} $r$
    \begin{equation*}
		\sum_{\oriented{u}{v} \in r\cap W} (\rho(v)-\rho(u)) = \sum_{\oriented{u}{v} \in r\cap W'} (\rho(v)-\rho(u))
	\end{equation*}
	for every $w_1,w_2\in V_G$  and walks $W,W'$ in $G$ from $w_1$ to $w_2$.
    See \Cref{fig:crossingprop-def} for an example.
    Equivalently, the sum is zero for every closed walk $C$, that is
    \begin{equation}
		\label{eq:zeroSumAPclass}
		\sum_{\oriented{u}{v} \in r\cap C} (\rho(v)-\rho(u)) = (0,0)\,.
	\end{equation}
\end{definition}
Note that this definition does depend on the chosen placement (see \Cref{fig:crossingprop-ex}).
\Cref{prop:zeroSumCheck} provides a set of cycles that are sufficient to be checked
instead of all closed walks for the finite case.

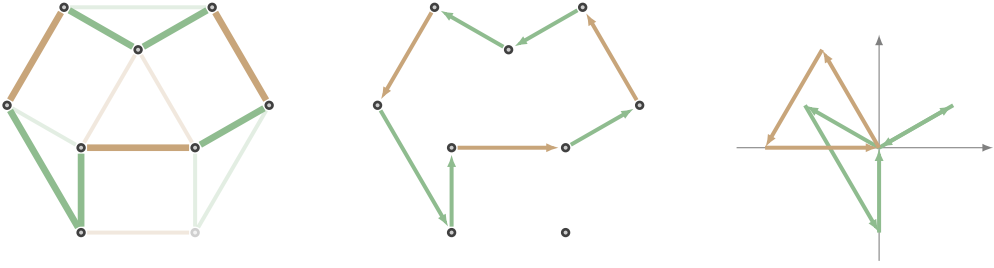
\begin{figure}[ht]
    \centering
    \begin{tikzpicture}[scale=1.5]
        \node[fvertex] (1) at (1,0) {};
        \node[fvertex] (2) at (60:1) {};
        \node[fvertex] (3) at (0,0) {};
        \node[fvertex] (4) at ($(1)+(30:0.75)$) {};
        \node[fvertex] (5) at ($(2)+(4)-(1)$) {};
        \node[fvertex] (6) at ($(2)+(150:0.75)$) {};
        \node[fvertex] (7) at ($(3)+(6)-(2)$) {};
        \node[fvertex] (8) at ($(3)+(270:0.75)$) {};
        \node[lfvertex] (9) at ($(1)+(8)-(3)$) {};

        \draw[ledge,col5] (1)edge(2) (2)edge(3) (3)edge(1);
        \draw[ledge,col5] (4)edge(5) (6)edge(7) (8)edge(9);
        \draw[ledge,col1] (5)edge(6) (7)edge(8) (9)edge(4);
        \draw[ledge,col1] (1)edge(4) (1)edge(9) (2)edge(5) (2)edge(6) (3)edge(7) (3)edge(8);
        \draw[hedge2,col1] (1)edge(4) (5)edge(2) (2)edge(6) (7)edge(8) (8)edge(3);
        \draw[hedge2,col5] (4)edge(5) (6)edge(7) (3)edge(1);
        \begin{scope}[xshift=3.25cm]
            \node[fvertex] (1) at (1,0) {};
            \node[fvertex] (2) at (60:1) {};
            \node[fvertex] (3) at (0,0) {};
            \node[fvertex] (4) at ($(1)+(30:0.75)$) {};
            \node[fvertex] (5) at ($(2)+(4)-(1)$) {};
            \node[fvertex] (6) at ($(2)+(150:0.75)$) {};
            \node[fvertex] (7) at ($(3)+(6)-(2)$) {};
            \node[fvertex] (8) at ($(3)+(270:0.75)$) {};
            \node[fvertex] (9) at ($(1)+(8)-(3)$) {};
            \draw[dedge,col1] (1)edge(4) (5)edge(2) (2)edge(6) (7)edge(8) (8)edge(3);
            \draw[dedge,col5] (4)edge(5) (6)edge(7) (3)edge(1);
        \end{scope}
        \begin{scope}[xshift=7cm]
            \draw[axes] (0,-1)--(0,1);
            \draw[axes] (-1.25,0)--(1,0);
            \draw[dedge,col1] (0,0)--++($(4)-(1)$);
            \draw[dedge,col1] ($(4)-(1)$)--++($(2)-(5)$);
            \draw[dedge,col1] ($(4)-(1)+(2)-(5)$)--++($(6)-(2)$);
            \draw[dedge,col1] ($(4)-(1)+(2)-(5)+(6)-(2)$)--++($(8)-(7)$);
            \draw[dedge,col1] ($(4)-(1)+(2)-(5)+(6)-(2)+(8)-(7)$)--++($ (3)-(8)$);
            \draw[dedge,col5] (0,0)--++($(5)-(4)$);
            \draw[dedge,col5] ($(5)-(4)$)--++($(7)-(6)$);
            \draw[dedge,col5] ($(5)-(4)+(7)-(6)$)--++($(1)-(3)$);
        \end{scope}
    \end{tikzpicture}
    \caption{A framework with a closed walk indicated in bold (left); the directed walk with colors of \APclasses{} (middle); the vectors of the edges form the walk for each \APclass{} (right). One can see that they do sum up to zero in this example.}
    \label{fig:crossingprop-def}
\end{figure}

\begin{figure}[ht]
    \centering
    \begin{tikzpicture}
        \begin{scope}
            \coordinate (h) at (0,1.3);
            \node[fvertex] (1) at (0,0) {};
            \node[fvertex] (2) at (1.3,0) {};
            \node[fvertex] (3) at (3.1,0.5) {};
            \node[fvertex] (4) at (0.58,1) {};
            \node[fvertex] (5) at ($(2)+(4)-(1)$) {};
            \node[fvertex] (6) at (2.66,1.12) {};
            \node[fvertex] (7) at ($(1)+(h)$) {};
            \node[fvertex] (8) at ($(2)+(h)$) {};
            \node[fvertex] (9) at ($(3)+(h)$) {};
            \node[fvertex] (10) at ($(4)+(h)$) {};
            \node[fvertex] (11) at ($(5)+(h)$) {};
            \node[fvertex] (12) at ($(6)+(h)$) {};
            \node[fvertex] (13) at (1.2,1.74) {};
            \node[fvertex] (14) at ($(10)+(13)-(7)$) {};
            \draw[edge,col5] (1)edge(2) (4)edge(5) (7)edge(8) (10)edge(11) (7)edge(13) (8)edge(13) (10)edge(14) (11)edge(14);
            \draw[edge,col1] (1)edge(4) (2)edge(3) (2)edge(5) (3)edge(5) (3)edge(6) (5)edge(6) (7)edge(10) (8)edge(9) (9)edge(11) (9)edge(12) (11)edge(12) (13)edge(14);
            \draw[edge,col6] (1)edge(7) (2)edge(8) (3)edge(9) (4)edge(10) (6)edge(12);
        \end{scope}
        \begin{scope}[xshift=4.5cm]
            \coordinate (h) at (0,1.3);
            \node[lfvertex] (1) at (0,0) {};
            \node[lfvertex] (2) at (1.3,0) {};
            \node[lfvertex] (3) at (3.1,0.5) {};
            \node[lfvertex] (4) at (0.58,1) {};
            \node[lfvertex] (5) at ($(2)+(4)-(1)$) {};
            \node[lfvertex] (6) at (2.66,1.12) {};
            \node[lfvertex] (7) at ($(1)+(h)$) {};
            \node[fvertex] (8) at ($(2)+(h)$) {};
            \node[fvertex] (9) at ($(3)+(h)$) {};
            \node[lfvertex] (10) at ($(4)+(h)$) {};
            \node[fvertex] (11) at (2.46,3.94) {};
            \node[lfvertex] (12) at ($(6)+(h)$) {};
            \node[fvertex] (13) at (1.2,1.74) {};
            \node[fvertex] (14) at ($(10)+(13)-(7)$) {};
            \draw[hedge2,col5] (11)edge(14) (13)edge(8);
            \draw[hedge2,col1] (8)edge(9) (9)edge(11) (14)edge(13);
            \draw[ledge,col5] (1)edge(2) (4)edge(5) (7)edge(8) (10)edge(11) (7)edge(13) (8)edge(13) (10)edge(14) (11)edge(14);
            \draw[ledge,col1] (1)edge(4) (2)edge(3) (2)edge(5) (3)edge(5) (3)edge(6) (5)edge(6) (7)edge(10) (8)edge(9) (9)edge(11) (9)edge(12) (11)edge(12) (13)edge(14);
            \draw[ledge,col6] (1)edge(7) (2)edge(8) (3)edge(9) (4)edge(10) (6)edge(12);
        \end{scope}
        \begin{scope}[xshift=8cm]
            \coordinate (h) at (0,1.3);
            \coordinate (1) at (0,0);
            \coordinate (2) at (1.3,0);
            \coordinate (3) at (3.1,0.5);
            \coordinate (4) at (0.58,1);
            \coordinate (5) at ($(2)+(4)-(1)$);
            \coordinate (6) at (2.66,1.12);
            \coordinate (7) at ($(1)+(h)$);
            \node[fvertex] (8) at ($(2)+(h)$) {};
            \node[fvertex] (9) at ($(3)+(h)$) {};
            \coordinate (10) at ($(4)+(h)$);
            \node[fvertex] (11) at (2.46,3.94) {};
            \coordinate (12) at ($(6)+(h)$);
            \node[fvertex] (13) at (1.2,1.74) {};
            \node[fvertex] (14) at ($(10)+(13)-(7)$) {};
            \draw[dedge,col1] (8)edge(9) (9)edge(11) (14)edge(13);
            \draw[dedge,col5] (11)edge(14) (13)edge(8);
        \end{scope}
    \end{tikzpicture}
    \caption{The left framework is walk-independent,
        whereas in the middle one, which has the same underlying graph and which is still a parallelogram placement, the walk-independence is violated by the indicated cycle. This can be seen on the right, where the cycle is colored according to the \APclasses{}.}
    \label{fig:crossingprop-ex}
\end{figure}
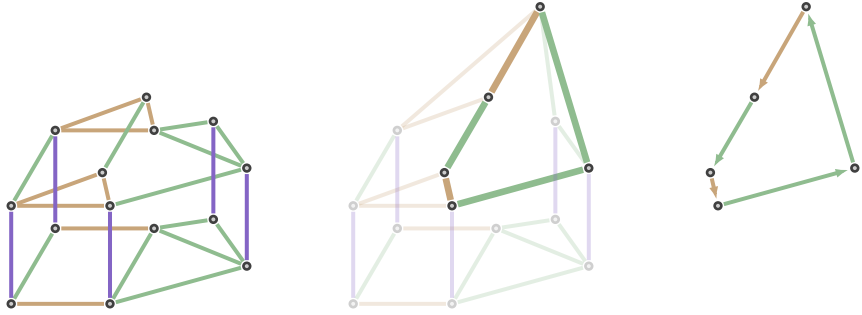

Walk-independence implies another interesting property.
\begin{lemma}
	\label{lem:APclassesAreEdgeCuts}
	Let $(G,\rho)$ be walk-independent.
	Every two distinct vertices in $G$ are separated by an \APclass{}.
	In particular, each \APclass{} is an edge cut.
\end{lemma}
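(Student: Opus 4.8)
The plan is a proof by contradiction. Suppose $w_1 \neq w_2$ are vertices of $G$ that are \emph{not} separated by any \APclass{}; by this I mean that for every \APclass{} $r$ there is a walk from $w_1$ to $w_2$ that uses no edge of $r$. I will show that this forces $\rho(w_1) = \rho(w_2)$, contradicting the injectivity of $\rho$ (recall that a parallelogram placement is by definition injective).

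First I would fix an arbitrary walk $W$ from $w_1$ to $w_2$; such a walk exists because $G$ is connected, and it is finite, so only finitely many \APclasses{} contain an edge lying on $W$. For each such \APclass{} $r$, let $W_r$ be a walk from $w_1$ to $w_2$ avoiding all edges of $r$, which exists by our assumption. Then $\sum_{\oriented{u}{v}\in r\cap W_r}(\rho(v)-\rho(u))$ is an empty sum, hence equal to $(0,0)$, and walk-independence applied to the two walks $W$ and $W_r$ (both running from $w_1$ to $w_2$) yields $\sum_{\oriented{u}{v}\in r\cap W}(\rho(v)-\rho(u)) = (0,0)$.

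Since each edge of $G$ lies in exactly one \APclass{}, the telescoping displacement along $W$ splits as a finite sum over the \APclasses{} meeting $W$:
\[
\rho(w_2)-\rho(w_1) = \sum_{\oriented{u}{v}\in W}(\rho(v)-\rho(u)) = \sum_{r}\sum_{\oriented{u}{v}\in r\cap W}(\rho(v)-\rho(u)) = (0,0).
\]
As $\rho$ is injective this gives $w_1=w_2$, the desired contradiction, so every two distinct vertices are separated by some \APclass{}. For the final claim, given an \APclass{} $r$ I would pick any edge $uv\in r$; its endpoints are distinct because $\rho$ is a placement, so by what was just proved some \APclass{} $r'$ separates $u$ and $v$. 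The one-edge walk $(u,v)$ must then contain an edge of $r'$, i.e.\ $uv\in r'$, and since an edge lies in a unique \APclass{} we get $r'=r$. Thus $r$ itself separates $u$ from $v$, so removing $r$ disconnects $G$ and $r$ is an edge cut.

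I do not expect a serious obstacle in this argument; the only point that needs care is that $G$ may be countably infinite, so $W$ must be chosen finite and the decomposition of the displacement sum over \APclasses{} must be observed to be a finite sum before invoking walk-independence term by term. (The other, easily overlooked, ingredient is simply that a parallelogram placement is injective, which is what turns $\rho(w_1)=\rho(w_2)$ into the contradiction.)
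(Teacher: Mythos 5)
Your proof is correct and is essentially the paper's argument written contrapositively: both decompose the displacement $\rho(w_2)-\rho(w_1)$ along a walk into the per-\APclass{} contributions and use walk-independence (you via the two-walk formulation, the paper via the equivalent closed-walk formulation) to conclude that some class must have a non-zero contribution and hence separate the two vertices. The edge-cut conclusion is also handled the same way, so no issues.
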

\begin{proof}
	Let $\bar{u},\bar{v}\in V_G, \bar{u}\neq\bar{v}$ and $W$ be a path from $\bar{u}$ to $\bar{v}$.
	We have
	\begin{align*}
		\rho(\bar{v}) - \rho(\bar{u})
			= \sum_{\oriented{u}{v} \in W} (\rho(v)-\rho(u))
			= \sum_r\sum_{\oriented{u}{v} \in r\cap W} (\rho(v)-\rho(u))\,,
	\end{align*}
	where the outer sum is over all \APclasses{} $r$ occurring in $W$.
	Since $\rho(\bar{v}) \neq \rho(\bar{u})$ by injectivity,
	at least one of the inner sums has to be non-zero.
	The corresponding \APclass{} separates $\bar{u}$ and~$\bar{v}$,
	otherwise the cycle obtained by concatenating $W$ with a path from $\bar{v}$ to $\bar{u}$
	avoiding $r$ would contradict \Cref{eq:zeroSumAPclass}.

	In particular, an \APclass{} separates the end vertices of any of its edges,
	namely, it is an edge cut.
\end{proof}

Now we go the other way round and show how \APclasses{} affect flexibility.
\begin{proposition}
	\label{prop:twoAPclassesImplyFlex}
	Let $(G,\rho)$ be walk-independent such that $G$ has $\ell\in \NN \cup \{\infty\}$ \APclasses{}.
	If $\ell\geq 2$, then the framework is flexible.
	In particular, if $\ell \in \NN$ and  $\ell\geq 2$, there are $\ell-1$ independent ways how it can flex.
\end{proposition}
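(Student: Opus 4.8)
The plan is to construct an explicit flex by choosing, for each \APclass{} $r$, an angle-function $\theta_r\colon [0,\varepsilon) \to \RR$ with $\theta_r(0) = 0$, and then defining $\rho_t(v)$ by integrating rotated edge-vectors along any path from a fixed base vertex $v_0$ to $v$. Concretely, fix $v_0 \in V_G$, set $\rho_t(v_0) = \rho(v_0)$, and for any path $W = (v_0, \dots, v)$ put
\[
    \rho_t(v) = \rho(v_0) + \sum_{\oriented{u}{w} \in W} \Rot{\theta_{r(uw)}(t)}\bigl(\rho(w) - \rho(u)\bigr),
\]
where $r(uw)$ is the \APclass{} of $uw$ and $\Rot{\cdot}$ denotes rotation by the given angle. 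The first key step is to check this is \emph{well-defined}, i.e.\ independent of the chosen path $W$: the difference between two paths is a closed walk $C$, and grouping the sum by \APclass{} gives $\sum_r \Rot{\theta_r(t)} \sum_{\oriented{u}{w}\in r \cap C}(\rho(w)-\rho(u))$, which vanishes term-by-term by walk-independence \eqref{eq:zeroSumAPclass}. This is exactly the place where walk-independence is essential; without it the construction collapses.

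The second step is to verify that $(G,\rho_t)$ is equivalent to $(G,\rho)$: for an edge $uw$, taking a path to $u$ and extending it by the edge $uw$ gives $\rho_t(w) - \rho_t(u) = \Rot{\theta_{r(uw)}(t)}(\rho(w)-\rho(u))$, which has the same norm as $\rho(w)-\rho(u)$ since rotations are isometries. Continuity in $t$ is immediate as long as each $\theta_r$ is continuous. Thus any choice of continuous functions $\theta_r$ with $\theta_r(0)=0$ yields a flex; it remains to choose them so that the flex is \emph{non-trivial}. The third step handles this: since $\ell \geq 2$, pick two distinct \APclasses{} $r_1 \neq r_2$ and set $\theta_{r_1}(t) = t$, $\theta_{r_2}(t) = -t$ (or, if one wants all classes to move, any non-constant choices that are not all equal), and all other $\theta_r \equiv 0$. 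By \Cref{lem:APclassesAreEdgeCuts}, $r_1$ is an edge cut, so there exist vertices $a,b$ with every $a$–$b$ path crossing $r_1$ a net nonzero number of times in a way that forces $\rho_t(b)-\rho_t(a)$ to genuinely change direction; comparing with a trivial motion (which would rotate \emph{all} edge directions by the same angle) shows the flex cannot be congruent to $\rho$ for $t \in (0,\varepsilon)$, because edges in $r_1$ and edges in $r_2$ change their mutual angle. For the last sentence, when $\ell \in \NN$ and $\ell \geq 2$, the space of choices $(\theta_r)_{r}$ modulo the one-dimensional family of "rotate all classes equally" (which produces trivial flexes) is $(\ell-1)$-dimensional, giving $\ell - 1$ independent flexes; one makes this precise by exhibiting $\ell-1$ parameter directions, e.g.\ for each class $r \neq r_0$ (a fixed reference class) the flex that moves only $r$ and $r_0$ oppositely, and checking these are independent by looking at the induced infinitesimal flexes at $t=0$.

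**The main obstacle** I anticipate is the rigorous proof of non-triviality: one must show that the constructed $\rho_t$ is non-congruent to $\rho$, not merely that it differs from $\rho$. A clean way is to differentiate at $t=0$ to obtain an infinitesimal flex $\varphi(v) = \sum_{\oriented{u}{w}\in W}\theta'_{r(uw)}(0)\,\piHalfRotation(\rho(w)-\rho(u))$ and argue it is not of the form $A\rho(v)+b$: a trivial infinitesimal flex assigns the same "angular velocity" to every edge direction, whereas our $\varphi$ assigns $\theta'_{r_1}(0) \neq \theta'_{r_2}(0)$ to edges in $r_1$ versus $r_2$, and since by \Cref{lem:APclassesAreEdgeCuts} each class actually contains an edge (so both classes are realized), these angular velocities are witnessed on genuine edges — contradiction. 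If the derivative argument is preferred, one should note a non-trivial infinitesimal flex at $t=0$ implies the continuous flex is non-trivial on a possibly smaller interval, which suffices. One should also take a moment to confirm that $\rho_t$ remains a valid placement (adjacent vertices distinct) for small $t$, which is clear since $\|\rho_t(w)-\rho_t(u)\| = \|\rho(w)-\rho(u)\| \neq 0$ for edges, and — if one wants induced 4-cycles to stay non-degenerate parallelograms, though the definition of flex does not require this — that too follows since opposite edges of an induced 4-cycle lie in the same \APclass{} and hence are rotated by the same angle.
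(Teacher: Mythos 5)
Your proposal is correct and follows essentially the same route as the paper: both construct the flex by summing, along a walk from a base vertex, the edge vectors of each \APclass{} rotated by a class-dependent angle, use walk-independence for well-definedness, observe that edge lengths are preserved because the two endpoints of an edge differ only in the contribution of that edge's own class, and obtain non-triviality (and the $\ell-1$ parameters) from the fact that edges in distinct classes change their mutual angle. The only cosmetic difference is that the paper normalizes by fixing one class's angle to $0$ rather than rotating two classes oppositely.
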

\begin{proof}
	Let $\sst{r_i}{i \in I}$ be the \APclasses{} of $G$, where $I=\{0, \ldots, \ell-1\}$ or $\NN$
	depending whether there are finitely or infinitely many of them.
	We pick a vertex $\bar{u}$. We can suppose that $\rho(\bar{u})=(0,0)$.

	For each $i\in I$,
	we define a map $\rho_i:V_G\rightarrow\RR^2$ as follows:
	let $W$ be any walk in $G$ from $\bar{u}$ to $v$ and
	\begin{equation*}
		\rho_i (v) = \sum_{\oriented{w_1}{w_2}\in W \cap r_i} (\rho(w_2)-\rho(w_1))\,.
	\end{equation*}
	By the walk-independence, it is well-defined,
	namely, the sum is independent of the choice of~$W$.
	Moreover, $\rho_i(v)\neq (0,0)$ only for finitely many $i$
	since the same walk $W$ can be used for all $i$ and as it is finite, it meets only finitely many \APclasses{}.
	We observe that if $uv\in r_j$ and $i\neq j$, then $\rho_i (u) = \rho_i (v)$.

	We define a map $\ttt\mapsto \rho_\ttt$,
	where $\ttt=(t_i)_{i\in I}$ with $t_0=0$ and $t_i\in [0,2\pi)$ for $i\neq 0$, by
	\begin{equation*}
		\rho_\ttt(v) =  \sum_{i\in I} \Rot{t_i} \rho_i(v)\,,
	\end{equation*}
	where $\Rot{t_i}$ is the clockwise rotation matrix by $t_i$ radians.
	There are only finitely many non-zero terms in the sum by the remark above.

	First, we show that setting $\ttt=\zerovec=(0)_{i\in I}$ gives $\rho$.
	Let $v\in V_G$ and $W$ be a path from $\bar{u}$ to $v$. Now
	\begin{align*}
		\rho_\mathbf{0}(v) &=\sum_{i\in I} \Rot{0} \rho_i(v)
				= \sum_{i\in I} \rho_i(v)
				= \sum_{i\in I} \sum_{\oriented{w_1}{w_2}\in W \cap r_i} (\rho(w_2)-\rho(w_1))\\
			& = \sum_{\oriented{w_1}{w_2}\in W} (\rho(w_2)-\rho(w_1))
				= \rho(v) - \rho(\bar{u}) = \rho(v)\,.
	\end{align*}
	We check that the edge lengths induced by each realization in the image are the same.
	If $uv$ is an edge belonging to an \APclass{} $r_j$,
	then for every $\ttt$ we have
	\begin{align*}
		\|\rho_\ttt(u) - \rho_\ttt(v)\|
			&= \left\|\sum_{i\in I} \Rot{t_i}(\rho_i(u) - \rho_i(v))\right\|
			 = \left\|\Rot{t_j}(\rho_j(u) - \rho_j(v))\right\| \\
			&= \|\rho_j(u) - \rho_j(v)\|\,,
	\end{align*}
	which is independent of $\ttt$. Notice that the
	second equality is due to the observation made after defining $\rho_i$.

	Hence, for instance $t \mapsto \rho_{(0,t,0,\ldots,0)}$, or respectively $t \mapsto \rho_{(0,t,0,\ldots)}$ if $\ell=\infty$, gives a flex of $(G,\rho)$
	which is non-trivial since the edges in $r_1$ change the angle
	with the edges in other \APclasses{}.
\end{proof}

In \Cref{fig:ex:flex} we see an example of a graph with three \APclasses{} and how it can flex.
We recall the Cartesian product of graphs in order to state the main theorem.

\begin{figure}[ht]
    \centering
    \begin{tikzpicture}[scale=1]
        \foreach \a/\b [count=\i from 0] in {0/20,0/10,0/0,10/0,20/0}
        {
            \begin{scope}[xshift=2.5*\i cm]
                \node[fvertex] (1) at (1,0) {};
                \node[fvertex] (2) at (60:1) {};
                \node[fvertex] (3) at (0,0) {};
                \node[fvertex] (4) at ($(1)+(40+\a:0.75)$) {};
                \node[fvertex] (5) at ($(2)+(4)-(1)$) {};
                \node[fvertex] (7) at ($(2)+(140+\b:0.75)$) {};
                \node[fvertex] (8) at ($(3)+(7)-(2)$) {};
                \node[fvertex] (6) at ($(5)+(7)-(2)$) {};
                \node[fvertex] (9) at (-60:1) {};
                \draw[edge,col1] (1)edge(2) (1)edge(9) (2)edge(3) (3)edge(1) (4)edge(5) (3)edge(9) (7)edge(8);
                \draw[edge,col5] (1)edge(4) (2)edge(5) (6)edge(7);
                \draw[edge,col4] (2)edge(7) (3)edge(8) (5)edge(6);
            \end{scope}
        }
        \draw[col4,|-|] ($(60:1)-(1.25,2)$)--++(7.5,0);
        \draw[col5,|-|] ($(60:1)-(1.25,2.2)+(5,0)$)--++(7.5,0);
    \end{tikzpicture}
    \caption{A framework with two independent ways of flexing. In the first half the \APclass{}    \protect\tikz{\protect\draw[edge,col4](0,0)--(0.5,0)} on the left causes the flex. In the second part the \APclass{} \protect\tikz{\protect\draw[edge,col5](0,0)--(0.5,0)} on the right does.}
    \label{fig:ex:flex}
\end{figure}

\begin{definition}
	The \emph{Cartesian product} of graphs $Q_1, \dots, Q_\ell$ is the graph 
	$Q_1\cartProd \dots\cartProd Q_\ell$ whose vertex set is $V_{Q_1}\times \dots\times V_{Q_\ell}$
	and $(u_1,\ldots,u_\ell)$ is adjacent to $(v_1,\ldots,v_\ell)$ if there is $1\leq j\leq \ell$
	such that $u_jv_j\in E_{Q_j}$ and $u_i=v_i$ for all $i\neq j$.
	
	We say that a graph $G$ is a \emph{non-trivial subgraph} of $H = Q_1\cartProd \dots\cartProd Q_\ell$
	if there is	an injective graph homomorphism $h:G\rightarrow H$
    such that $|\pi_i(h(G))|\geq 2$ for all $1\leq i\leq \ell$,
    where  $\pi_i:V_H \rightarrow V_{Q_i}$
	is the projection given by $\pi_i(v_1,\ldots,v_\ell)=v_i$.
\end{definition}

Now we prove the main result of the paper.
We remark that further equivalent conditions on flexibility are given by \Cref{cor:FlexIffCartNAC} in \Cref{sec:nac}.

\begin{theorem}
    \label{thm:main}
    Let $(G,\rho)$ be walk-independent.
    The following statements are equivalent:
    \begin{enumerate}
        \item\label{it:flex} $(G,\rho)$ is flexible,
        \item\label{it:twoAPclasses} $G$ has at least two \APclasses{}.
    \end{enumerate}
    If in addition all 3-cycles in $G$ form non-degenerate triangles in $\rho$,
    then the above statements are equivalent to $(G,\rho)$ being infinitesimally flexible.
    
    If $G$ has $\ell$ \APclasses{}, where $2\leq \ell\in \NN$, then
    $G$ is a non-trivial subgraph of the Cartesian product of $\ell$ graphs. 
\end{theorem}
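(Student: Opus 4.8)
The plan is to construct the Cartesian-product target directly from the combinatorics of the \APclasses{}, with \Cref{lem:APclassesAreEdgeCuts} as the key ingredient; this mirrors the construction of the maps $\rho_i$ in the proof of \Cref{prop:twoAPclassesImplyFlex}, which are exactly the ``coordinates'' of the embedding on the geometric side. First I would observe that $\TPrel$ is an equivalence relation on $E_G$, so the \APclasses{} $r_1,\dots,r_\ell$ partition the edge set: each edge lies in exactly one of them. For each $i$, delete the edges of $r_i$ from $G$ and let $Q_i$ be the quotient graph whose vertices are the connected components of $G\setminus r_i$, two of them being joined by an edge whenever some edge of $r_i$ has one endpoint in each. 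By \Cref{lem:APclassesAreEdgeCuts} the class $r_i$ is an edge cut that separates the end vertices of each of its own edges, so no edge of $r_i$ lies inside a single component; hence $Q_i$ is a simple loopless graph.

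Next I would define $h\colon V_G\to V_{Q_1}\times\dots\times V_{Q_\ell}$ by $h(v)=(C_1(v),\dots,C_\ell(v))$, where $C_i(v)$ is the component of $G\setminus r_i$ containing $v$, and verify the three required properties. \emph{Homomorphism:} for an edge $uv\in E_G$, let $r_j$ be the unique \APclass{} containing it; for every $i\neq j$ the edge $uv$ survives in $G\setminus r_i$, so $C_i(u)=C_i(v)$, while for $i=j$ the vertices $u,v$ lie in distinct components of $G\setminus r_j$ (edge-cut property) that are adjacent in $Q_j$ by construction; thus $h(u)$ and $h(v)$ differ precisely in the $j$-th coordinate and are adjacent there, so $h(u)h(v)$ is an edge of the product. \emph{Injectivity:} if $u\neq v$, then by \Cref{lem:APclassesAreEdgeCuts} some \APclass{} $r_i$ separates them, whence $C_i(u)\neq C_i(v)$ and $h(u)\neq h(v)$. \emph{Non-triviality:} each $r_i$ is non-empty, so for any edge $uv\in r_i$ the components $C_i(u)$ and $C_i(v)$ are two distinct elements of $\pi_i(h(G))$, giving $|\pi_i(h(G))|\geq 2$.

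I do not anticipate a real obstacle: once \Cref{lem:APclassesAreEdgeCuts} is in hand the construction is essentially forced and the checks are routine. The two points that need the slightest care are that the \APclasses{} genuinely partition $E_G$ (so that the ``unique $j$'' in the homomorphism step is legitimate) and that $Q_i$ has no loops, both of which reduce to the fact that $r_i$ separates the end vertices of each of its edges.
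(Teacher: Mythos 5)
Your construction of the Cartesian-product embedding is correct and is essentially the paper's own argument: the same quotient graphs $Q_i$ on the components of $G\setminus r_i$ (following Klav\v{z}ar et al.), the same map $h(v)=(C_1(v),\dots,C_\ell(v))$, and the same three checks, all resting on \Cref{lem:APclassesAreEdgeCuts}. That part is fine.

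The gap is that this is only the last claim of \Cref{thm:main}; your proposal does not address the rest of the statement. The equivalence of \ref{it:flex} and \ref{it:twoAPclasses} is an immediate citation of \Cref{prop:flexImpliesTwoAPclasses} and \Cref{prop:twoAPclassesImplyFlex}, so omitting it is venial, but the equivalence with infinitesimal flexibility genuinely needs an argument that you have not supplied. The direction ``infinitesimally flexible $\Rightarrow$ at least two \APclasses{}'' is \Cref{prop:infFlexImpliesTwoAPclasses} (and this is where the non-degenerate-triangle hypothesis enters). The converse direction is not automatic: a non-trivial continuous flex does not in general differentiate to a \emph{non-trivial} infinitesimal flex (the derivative at $t=0$ could be induced by a rigid motion, or vanish). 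The paper handles this by writing down the derivative of the flex from \Cref{prop:twoAPclassesImplyFlex} explicitly, namely $\varphi(w)=\piHalfRotationMatrix\cdot\sum_{\oriented{w_1}{w_2}\in W\cap r}(\rho(w_2)-\rho(w_1))$ for a fixed \APclass{} $r$, and then verifying non-triviality directly: $\varphi(u)-\varphi(v)=\piHalfRotationMatrix(\rho(u)-\rho(v))\neq(0,0)$ for $uv\in r$ while $\varphi(u)-\varphi(v)=(0,0)$ for $uv\notin r$ (such edges exist since $\ell\geq 2$), which is incompatible with $\varphi$ being of the form $A\rho(\cdot)+b$. Some such explicit verification is required to close the theorem.
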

\begin{proof}
    \ref{it:flex} is equivalent to \ref{it:twoAPclasses} by \Cref{prop:flexImpliesTwoAPclasses}
    and \Cref{prop:twoAPclassesImplyFlex}.
    If the triangles are non-degenerate, then the existence of a non-trivial infinitesimal flex
    implies \ref{it:twoAPclasses} by \Cref{prop:infFlexImpliesTwoAPclasses}.
    On the other hand, differentiation of the flex constructed in \Cref{prop:twoAPclassesImplyFlex}
    gives an infinitesimal flex, namely, we fix a vertex $\bar{u}$ and an \APclass{} $r$ and for $w\in V_G$ we set
    \[
        \varphi(w)=\piHalfRotationMatrix\cdot \sum_{\oriented{w_1}{w_2}\in W \cap r} (\rho(w_2)-\rho(w_1))\,,
    \]
    where $W$ is a walk from $\bar{u}$ to $w$.
    If $uv\in r$, then
    \[
        \varphi(u)-\varphi(v)
            =\piHalfRotationMatrix \left(\rho(u)-\rho(v)\right)\neq(0,0)\,,
    \]
    which is orthogonal to $\rho(u)-\rho(v)$.
    For $uv\notin r$ (such edges exist by the assumption that there are at least two \APclasses{}),
    we have $\varphi(u)-\varphi(v)=(0,0)$.
    Hence, the infinitesimal flex $\varphi$ is not induced by a rigid motion.
    
    Suppose that $G$ has \APclasses{} $r_1\ldots, r_\ell$ with $\ell\geq 2$.
    We follow the central concept of~\cite{Klavzar2005}.
    Let us consider \emph{quotient graphs} $Q_i$, where the vertices of $Q_i$
    are the connected components of the graph $G\setminus r_i$
    and two connected components are adjacent if and only if they are linked by an edge from $r_i$.
    Let $h_i:V_G\rightarrow V_{Q_i}$ be the map assigning to vertex $v$ the connected component of $G\setminus r_i$
    to which $v$ belongs.
    We define $h:G\rightarrow Q_1\cartProd \dots\cartProd Q_\ell$ by $h(v)=(h_1(v),\ldots,h_\ell(v))$.
    This is a graph homomorphism: if $uv\in E_G\cap r_i$,
    then $u$ and $v$ are in the same connected component of $G\setminus r_j$ for all $j\neq i$.
    Since $r_i$ is an edge cut by \Cref{lem:APclassesAreEdgeCuts},
    $h_i(u)\neq h_i(v)$ and $h_i(u)$ is adjacent to $h_i(v)$ in $Q_i$,
    namely, $h(u)h(v)$ is an edge of $Q_1\cartProd \dots\cartProd Q_\ell$.
    This also shows that $|\pi_i(h(G))|\geq 2$.
    We show that $h$ is injective: let $u,v$ be two distinct vertices of $G$.
    By \Cref{lem:APclassesAreEdgeCuts}, there is an \APclass{} $r_i$ separating $u$ and $v$,
    hence, $h(u)$ and $h(v)$ differ on the $i$-th component.
\end{proof}

\subsection{Computation}\label{sec:comp}
In this section we analyze how to find the \APclasses{} of a given graph computationally.
We give an algorithm and show its complexity.

Assume $|V_G|=n$, $|E_G|=m$ and the maximum  degree is $d$.
A disjoint-set data structure is an efficient way
to construct the set of equivalence classes, namely, \APclasses{}.
It allows to check if two elements belong to the same class and
to merge classes containing elements $e$ and $e'$ (\union{e,e'}).
\begin{algorithm}[ht]
	\caption{\textsc{\APclasses{} computation}}
	\label{alg:APclasses}
	\begin{algorithmic}[1]
		\Require $G$ given as adjacency lists
		\Ensure \APclasses{} of $G$
		\State Initialize a disjoint-set data structure $E$ with the singletons being the edges of $G$
		\State Initialize an empty hash table $T$ \Comment{to store $\{u,v,w\}$ such that $uw,vw\in E_G$}
		\State Initialize an empty hash table $S$ \Comment{to store $\{u,v\}\mapsto (w,\ell)$ such that $uw,vw\in E_G$, $\ell\in \NN$}
		\For{$w\in V_G$}
		    \For{$u,v$, $u\neq v$ in the adjacency list of $w$}
		        \If{$\{u,v,w\}\in T$}\label{line:b-begin}
		            \State \union{uv,vw,uw}
		        \Else\label{line:elseT}
		            \State Add $\{u,v,w\}$ to $T$
    		        \If{$\{u,v\}\notin S$}
    		            \State Add $\{u,v\}\mapsto (w,1)$ to $S$
    		        \Else
    		            \State Let $(w',l)$ be the value of $\{u,v\}$ in $S$
    		            \State \union{uw,vw'} and \union{vw,uw'}
    		            \If{$\ell=2$}\label{line:ifl}
    		                \State \union{uw,vw}
    		            \EndIf
    		            \State Set $\{u,v\}\mapsto (w',l+1)$ in $S$
    		        \EndIf
		        \EndIf
		        \If{$E$ has a single set}
		            \State\Return There is only a single \APclass{} $E_G$.
		        \EndIf\label{line:b-end}
		    \EndFor
		\EndFor
		\State\Return The sets in $E$
	\end{algorithmic}
\end{algorithm}

Notice that if $G$ has a parallelogram placement,
then it cannot have an induced complete bipartite subgraph on $2+s$
vertices with $s\geq 3$, since there is no injective realization such that
all 4-cycles in the complete bipartite subgraph, which are induced, form parallelograms.
Nevertheless, the presented algorithm works for any graph.

We show that \Cref{alg:APclasses} is correct:
considering a 3-cycle $(u_1,u_2,u_3,u_1)$,
we can assume that $u_1$ is processed first, causing $\{u_1,u_2,u_3\}$ to be added to $T$.
Then when $u_2$ or $u_3$ is processed, $\{u_1,u_2,u_3\}$ is in $T$ and
all three edges of the 3-cycle are forced to be in the same \APclass{}.

Let $(w_1,u,w_2,v,w_1)$ be a 4-cycle and $\{w_1,\ldots,w_s\}$
be all vertices that are adjacent to both $u$ and $v$.
In other words, there is a complete bipartite subgraph $H$ on $2+s$ vertices.
If $s=2$, the bipartite subgraph is actually a 4-cycle
and each pair of the opposite edges is in the same \APclass{}.
Otherwise, all edges of $H$ are in the same \APclass{}.
We show that the algorithm guarantees this.
We can assume that $uv\notin E_G$,
otherwise the edges of $H$ are in the same \APclass{}
thanks to being in 3-cycles sharing $uv$.
We can assume that $w_1,\ldots,w_s$ are processed by the outer loop
in the given order.
Since $\{u,w_i,v\}$ is not in $T$ (otherwise $uv\in E_G$),
the ``else'' branch (\Cref{line:elseT}) is entered for every $1\leq i\leq s$.
When $i=1$, $\{u,v\}\mapsto (w_1,1)$ is added to $S$.
When $i=2$, $uw_1$ and $vw_2$ are guaranteed to be
in the same \APclass{} $r$ and $vw_1$ and $uw_2$ in \APclass{} $r'$.
Since $l=1$, the inner ``if'' (\Cref{line:ifl}) is skipped.
When $i=3$, $uw_3$ is guaranteed to be in $r'$ and $vw_3$ in $r$.
Since $l=2$, the \APclasses{} $r$ and $r'$ are merged.
For $i\geq 4$, edges $uw_i,vw_i$ are guaranteed to be in $r=r'$.

We can detect induced complete bipartite subgraphs by running the algorithm
without the stopping condition and checking $uv\in E_G$
for all $\{u,v\}$ in $S$ that map to $(w,l)$ with $l\geq 3$.

Clearly, the algorithm does not force any other pair of edges
to be in the same \APclass{} then those that are supposed to be.
It can happen that all edges are already in the same \APclass{}
before processing all vertices,
hence it makes sense to keep track of the number of the classes in $E$ and stop if there is only a single one.

The number of times the block in the nested loops (\Crefrange{line:b-begin}{line:b-end}) is executed is at most
\begin{equation*}
    \sum_{w\in V_G} \frac{\deg w(\deg w-1)}{2}
    \leq \frac{d-1}{2}\sum_{w\in V_G}\deg w = (d-1)m\,.
\end{equation*}
Within the block, finding, inserting and retrieving from hash table
is used, all of these having constant amortized time.
Next, we use \union{\cdot}, which has amortized time given by the inverse Ackermann function $\alpha$ \cite{Tarjan1975}.
Listing the sets in $E$ costs $O(m\, \alpha(m))$ operations.
In total, the complexity is $O(dm\,\alpha(dm))$.

It is well known that a framework $(G,\rho)$ is infinitesimally rigid if and only if
its \emph{rigidity matrix} has the rank equal to $2|V_G|-3$.
We want to compare \Cref{alg:APclasses} with the computation of the rigidity matrix rank.
There is a randomized algorithm~\cite{Cheung2013}
allowing to compute the rank $r$ of an $m\times n$ matrix $A$, $n\leq m$, in $O(\ell+r^\omega)$ field operations,
where $\omega$ is the matrix multiplication exponent
and $\ell$ is the number of non-zero elements of $A$.
There is also a randomized algorithm~\cite{Saunders2004}
suitable for sparse matrices with complexity $O(n\ell)$.
In particular, the rank computation of
the rigidity matrix of $(G,\rho)$ with $n=|V_G|$, $m=|E_G|$
costs ${O(m+n^\omega)=O(n^\omega)}$, resp.\ $O(nm)$, operations
in the field of coordinates of $\rho$
since there are $4m$ non-zero entries.
To conclude, if the maximum degree $d$ is fixed,
the asymptotic behavior of \Cref{alg:APclasses} is better than the rank computation.
If the graph is dense, namely, $m$ is $\theta(n^2)$,
then the rank computation is asymptotically faster,
but one has to keep in mind that for the rank computation we count arithmetic operations in the field of realization coordinates whereas \Cref{alg:APclasses} works only with the input graph.

\subsection{Relation to NAC-colorings}\label{sec:nac}
NAC-colorings have been defined in \cite{flexibleLabelings} as a combinatorial property of graphs that have flexible frameworks.
We show here how \APclasses{} can be connected to NAC-colorings.
\begin{definition}
	Let~$G$ be a graph, possibly countably infinite.
	A coloring of edges $\delta\colon  E_G\rightarrow \{\text{\blue{}, \red{}}\}$
	is called a \emph{NAC-coloring},
	if it is surjective and for every cycle in~$G$,
	either all edges have the same color, or
	there are at least two edges in each color (see \Cref{fig:nac}).
\end{definition}
The main theorem for NAC-colorings shows their relation to flexes of a graph.
\begin{theorem}[\cite{DLinfinite,flexibleLabelings}]
	\label{thm:nacflexible}
	A connected non-trivial graph allows a flexible framework if and only if it has a NAC-coloring.
\end{theorem}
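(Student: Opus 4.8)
Since \Cref{thm:nacflexible} is quoted from \cite{DLinfinite,flexibleLabelings}, the plan is to reconstruct how its two implications are proved there.

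\emph{From a NAC-coloring to a flexible framework.} This is the easy direction and is morally \Cref{prop:twoAPclassesImplyFlex} with a NAC-coloring in the role of a partition into \APclasses{}. Given a NAC-coloring $\delta$, let $G_\red$ and $G_\blue$ be the spanning subgraphs of red, resp.\ blue, edges. The cycle condition forces that no red edge joins two vertices in a common connected component of $G_\blue$ (else that edge together with a blue path between its endpoints would be a cycle carrying exactly one red edge), and symmetrically. Hence one may place each vertex $v$ at the point whose first coordinate depends only on the $G_\red$-component of $v$ and whose second coordinate depends only on the $G_\blue$-component of $v$, choosing these values injectively on components; then every red edge becomes vertical and every blue edge horizontal, and adjacent vertices get distinct images, so this is a framework. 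Rotating, for each vertex, the cumulative red-edge displacement by an angle $t$ while keeping the blue-edge displacement fixed preserves every edge length and changes the mutual angle of a red and a blue edge (both exist as $\delta$ is surjective), giving a non-trivial flex. The same recipe works verbatim for countably infinite $G$.

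\emph{From a flexible framework to a NAC-coloring (finite case).} This is the hard direction and requires algebraic geometry. A non-trivial flex $\rho_t$ satisfies the polynomial edge-length equations, so it lies on an irreducible complex curve $C$ of realizations with the given edge lengths, and since the flex is non-trivial not every edge vector is constant on $C$. Passing to the isotropic coordinates $z_v = x_v + \mathrm{i}\,y_v$, $w_v = x_v - \mathrm{i}\,y_v$, each edge equation becomes $(z_u-z_v)(w_u-w_v)=\lambda_{uv}^2$, a nonzero constant, so in the function field $\CC(C)$ one has $\nu(z_u-z_v) = -\nu(w_u-w_v)$ for every edge $uv$ and every discrete valuation $\nu$ over $\CC$. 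The plan is to fix a place $\nu$ of the smooth projective model of $C$ and to color an edge $uv$ red when $\nu(z_u-z_v) > 0$ and blue otherwise. The cycle condition is then immediate: for a cycle $(v_1,\dots,v_k,v_1)$ the telescoping identities $\sum_i (z_{v_{i+1}}-z_{v_i}) = 0$ and $\sum_i (w_{v_{i+1}}-w_{v_i}) = 0$, together with the ultrametric fact that the least valuation in a vanishing sum is attained at least twice, force a non-monochromatic cycle to have at least two blue edges (from the first sum) and at least two red edges (from the second, using $\nu(w_u-w_v) = -\nu(z_u-z_v)$).

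\emph{The main obstacle} is to choose $\nu$ so that the coloring is also surjective. Here one exploits that some edge vector is non-constant on $C$, so -- after possibly reflecting the plane, which interchanges the roles of $z$ and $w$ -- the rational function $z_u - z_v$ is non-constant for some edge and therefore has both zeros and poles on the projective model; showing that a place producing edges of both colors can be selected is the technical heart of \cite{flexibleLabelings}. Note also that edges with $\nu(z_u-z_v)=0$ are declared blue, and it is exactly the asymmetry between the $z$- and the $w$-telescoping arguments above that makes this convention compatible with the cycle condition. For countably infinite $G$ the backward implication is unchanged, and the forward one is reduced to the finite case in \cite{DLinfinite}: a non-trivial flex changes the distance of some pair of vertices (otherwise all pairwise distances, hence $\rho_t$ up to congruence, would be preserved), so it restricts to a non-trivial flex -- and hence yields a NAC-coloring -- on every finite connected subgraph containing that pair, and these local colorings are assembled into a global NAC-coloring by a compactness argument on $\{\blue,\red\}^{E_G}$, the delicate point being to retain surjectivity in the limit.
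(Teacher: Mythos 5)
The paper does not prove \Cref{thm:nacflexible} at all --- it is imported from the cited references --- so there is no internal proof to compare against; what you have written is a reconstruction of the external arguments, and it is faithful in both directions. The ``NAC-coloring $\Rightarrow$ flex'' construction via red/blue components and the rotation of one summand of $\rho(v)$ is exactly the construction in the references (and is the same mechanism as \Cref{prop:twoAPclassesImplyFlex}), and the valuation-theoretic converse, including the ultrametric argument giving two blue edges from the $z$-telescoping sum and two red edges from the $w$-sum via $\nu(w_u-w_v)=-\nu(z_u-z_v)$, is the correct skeleton.

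The two points you flag as gaps are genuine, so let me record how they are closed. For surjectivity of the coloring: as written, a place where some non-constant $z_u-z_v$ vanishes gives a red edge but nothing guarantees a blue one; the fix is to ground an edge $\bar u\bar v$, i.e.\ normalize $\rho_t(\bar u)=(0,0)$ and $\rho_t(\bar v)=(\lambda,0)$ before taking the Zariski closure, so that $z_{\bar v}-z_{\bar u}$ is a nonzero constant, has valuation $0$ at every place, and is therefore blue everywhere, while non-triviality of the flex still forces some other edge function to be non-constant. For the infinite case, surjectivity survives the compactness limit because one can fix the same two edges (the grounded one and one whose direction changes) inside every finite approximant and, after a color swap if necessary, insist that they receive different colors in each $\delta_n$; the limit coloring then inherits both colors. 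With these two normalizations your sketch becomes the proof in the cited papers.
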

This result was generalized to periodic \cite{Dperiodic} and
rotational symmetry preserving flexes~\cite{DGLrotSymmetry,DLinfinite}.
Checking whether a general finite graph has a NAC-coloring is NP-complete~\cite{Garamvolgyi2022}.
Similarly to~\cite{GLbracing} we need a specific type of NAC-colorings in this paper.
\begin{definition}
	A NAC-coloring $\delta$ of a graph $G$ is called \emph{Cartesian}
	if no two distinct vertices are connected by a red and blue path simultaneously.
\end{definition}
	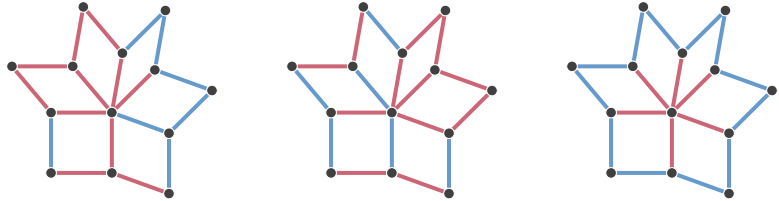
\begin{figure}[ht]
	  \centering
	  \begin{tikzpicture}[scale=0.8]
	    \node[gvertex] (a) at (0,0) {};
	    \node[gvertex] (b) at (1,0) {};
	    \node[gvertex,rotate around=40:(b)] (c) at ($(b)+(0,1)$) {};
	    \node[gvertex] (d) at ($(a)+(c)-(b)$) {};
	    \node[gvertex,rotate around=-10:(b)] (e) at ($(b)+(0,1)$) {};
	    \node[gvertex] (f) at ($(c)+(e)-(b)$) {};
	    \node[gvertex,rotate around=-45:(b)] (g) at ($(b)+(0,1)$) {};
	    \node[gvertex] (h) at ($(e)+(g)-(b)$) {};
	    \node[gvertex,rotate around=-110:(b)] (i) at ($(b)+(0,1)$) {};
	    \node[gvertex] (j) at ($(g)+(i)-(b)$) {};
	    \node[gvertex,rotate around=-180:(b)] (k) at ($(b)+(0,1)$) {};
	    \node[gvertex] (l) at ($(i)+(k)-(b)$) {};
	    \node[gvertex] (m) at ($(a)+(k)-(b)$) {};
	    \draw[redge] (a)--(b) (b)--(c) (c)--(d) (d)--(a) (b)--(e) (k)--(l) (b)--(k) (m)--(k) (e)--(f) (f)--(c) (b)--(g);
	    \draw[bedge] (g)--(h) (h)--(e) (b)--(i) (i)--(j) (j)--(g) (l)--(i) (a)--(m);
	  \end{tikzpicture}
	  \qquad
	  \begin{tikzpicture}[scale=0.8]
	    \node[gvertex] (a) at (0,0) {};
	    \node[gvertex] (b) at (1,0) {};
	    \node[gvertex,rotate around=40:(b)] (c) at ($(b)+(0,1)$) {};
	    \node[gvertex] (d) at ($(a)+(c)-(b)$) {};
	    \node[gvertex,rotate around=-10:(b)] (e) at ($(b)+(0,1)$) {};
	    \node[gvertex] (f) at ($(c)+(e)-(b)$) {};
	    \node[gvertex,rotate around=-45:(b)] (g) at ($(b)+(0,1)$) {};
	    \node[gvertex] (h) at ($(e)+(g)-(b)$) {};
	    \node[gvertex,rotate around=-110:(b)] (i) at ($(b)+(0,1)$) {};
	    \node[gvertex] (j) at ($(g)+(i)-(b)$) {};
	    \node[gvertex,rotate around=-180:(b)] (k) at ($(b)+(0,1)$) {};
	    \node[gvertex] (l) at ($(i)+(k)-(b)$) {};
	    \node[gvertex] (m) at ($(a)+(k)-(b)$) {};
	    \draw[redge] (a)--(b)  (c)--(d) (b)--(e) (f)--(c)
	    	(k)--(l) (b)--(g) (g)--(h) (h)--(e) (m)--(k) (b)--(i) (i)--(j) (j)--(g);
	    \draw[bedge] (b)--(c) (d)--(a)  (e)--(f) (b)--(k) (a)--(m) (l)--(i);
	  \end{tikzpicture}
	  \qquad
	  \begin{tikzpicture}[scale=0.8]
	    \node[gvertex] (a) at (0,0) {};
	    \node[gvertex] (b) at (1,0) {};
	    \node[gvertex,rotate around=40:(b)] (c) at ($(b)+(0,1)$) {};
	    \node[gvertex] (d) at ($(a)+(c)-(b)$) {};
	    \node[gvertex,rotate around=-10:(b)] (e) at ($(b)+(0,1)$) {};
	    \node[gvertex] (f) at ($(c)+(e)-(b)$) {};
	    \node[gvertex,rotate around=-45:(b)] (g) at ($(b)+(0,1)$) {};
	    \node[gvertex] (h) at ($(e)+(g)-(b)$) {};
	    \node[gvertex,rotate around=-110:(b)] (i) at ($(b)+(0,1)$) {};
	    \node[gvertex] (j) at ($(g)+(i)-(b)$) {};
	    \node[gvertex,rotate around=-180:(b)] (k) at ($(b)+(0,1)$) {};
	    \node[gvertex] (l) at ($(i)+(k)-(b)$) {};
	    \node[gvertex] (m) at ($(a)+(k)-(b)$) {};
	    \draw[redge] (a)--(b) (b)--(e) (b)--(c) (b)--(g)  (b)--(i) (b)--(k);
	    \draw[bedge]  (c)--(d) (d)--(a) (f)--(c) (k)--(l) (g)--(h)
	    	 (i)--(j) (j)--(g)(h)--(e) (m)--(k) (e)--(f) (a)--(m) (l)--(i);
	  \end{tikzpicture}
	  \caption{A coloring that is not a NAC-coloring (left), a Cartesian NAC-coloring (middle)
	  	and a NAC-coloring that is not Cartesian (right).}
	  \label{fig:nac}
	\end{figure}

The name \emph{Cartesian} is motivated by the relation to the Cartesian product of graphs,
see the discussion in \cite[Appendix~A]{GLbracing}.
Prior to the references mentioned in the discussion,
finite graphs that are non-trivial subgraphs of Cartesian products
were characterized in~\cite{Klavzar2005}
using edge colorings (called \emph{2-labelings with Condition~A} in the paper) by two colors satisfying
the condition stated in the following lemma, which proves that these are exactly Cartesian NAC-colorings.
\begin{lemma}
	\label{lem:cartNACIffCondA}
	Let $G$ be a connected graph.
	A surjective edge coloring of $G$ by red and blue is a Cartesian NAC-coloring
	if and only if for each induced non-monochromatic cycle the colors change
	at least three times while passing the cycle.  
\end{lemma}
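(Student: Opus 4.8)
The plan is to prove both directions of the equivalence by carefully analyzing what it means for a cycle to change color exactly twice. First I would fix some terminology: for a closed walk $C = (v_1, \ldots, v_k, v_1)$, say that the coloring \emph{changes color} at position $i$ if the edge $v_{i-1}v_i$ has a different color than $v_i v_{i+1}$. Since the colors are red and blue, the number of color changes around any closed walk is even, so the possible values are $0, 2, 4, \ldots$. A cycle is monochromatic precisely when the number of changes is $0$. The key reformulation is that a closed walk changes color exactly twice if and only if it splits into a red arc and a blue arc, i.e.\ the red edges form a contiguous segment of the cycle and so do the blue edges.

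For the forward direction, suppose $\delta$ is a Cartesian NAC-coloring and let $C$ be an induced non-monochromatic cycle. Because $\delta$ is a NAC-coloring, $C$ has at least two edges of each color, so it changes color at least twice; I must rule out exactly two. If $C$ changed color exactly twice, then $C = P_{\red} \cup P_{\blue}$ where $P_{\red}$ and $P_{\blue}$ are two paths (arcs of the cycle) sharing exactly their two endpoints, say $u$ and $v$, with $P_{\red}$ entirely red and $P_{\blue}$ entirely blue. Since $C$ is a cycle with at least two edges of each color, both $P_{\red}$ and $P_{\blue}$ have length $\geq 1$ and $u \neq v$ (actually I should check: since each color class has $\geq 2$ edges, each arc has length $\geq 2$, in particular $u\neq v$). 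But then $u$ and $v$ are joined by a monochromatic red path and a monochromatic blue path simultaneously, contradicting the definition of a Cartesian NAC-coloring. Hence $C$ changes color at least four times, as required. This direction is essentially immediate.

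For the converse, assume the coloring is surjective (stated in the lemma) and that every induced non-monochromatic cycle changes color at least three times — equivalently, at least four times by parity. I must show (a) it is a NAC-coloring and (b) it is Cartesian. For (a), I need that \emph{every} cycle (not just induced ones) that is non-monochromatic has at least two edges of each color; it suffices to show this for induced cycles and then bootstrap, but actually the cleaner route is: if some non-monochromatic cycle $C$ had, say, exactly one red edge $e$, I would find an induced non-monochromatic cycle contradicting the hypothesis. Here is the main obstacle: a non-monochromatic cycle with a single red edge need not be induced, and passing to an induced subcycle through the chords might accidentally make it monochromatic or change the count of color changes unpredictably. The plan is to take $C$ of minimum length among non-monochromatic cycles with at most one edge of some color; minimality forces $C$ to be induced (a chord would split $C$ into two shorter cycles, at least one of which is again non-monochromatic with at most one edge of that color — one has to check the chord's color to see which half inherits the property), and then the single-color-edge structure means $C$ changes color exactly twice, contradicting the hypothesis. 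For (b), suppose two distinct vertices $u, v$ are joined by a red path $P$ and a blue path $P'$; I may assume $P, P'$ are internally disjoint (take them of minimum total length and argue any shared internal vertex would let us shortcut to a shorter pair, or handle shared vertices by induction). Then $C = P \cup P'$ is a closed walk that is actually a cycle, non-monochromatic, changing color exactly twice. It need not be induced, but again I pass to a minimal such configuration and argue induced-ness via chords — a chord of $C$ together with an arc of $C$ gives a shorter red/blue path pair unless the chord is "parallel" in color, in which case it yields a shorter cycle that still changes color exactly twice. Either way we contradict the four-changes hypothesis.

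The technical heart of the argument — and the place I expect to spend the most care — is the chord-elimination / minimality bookkeeping in the converse, making sure that when an induced (or internally-disjoint-path) configuration is not yet achieved, a chord genuinely produces a strictly smaller counterexample of the same type rather than destroying the relevant property. Once the reduction to induced cycles is clean, both the NAC condition and the Cartesian condition fall out by directly counting color changes: exactly one edge of a color, or a red-path-plus-blue-path decomposition, each forces exactly two changes, contradicting $\geq 4$. I would also record the trivial observation that the hypothesis "at least three changes" is equivalent to "at least four changes" by parity, which is used implicitly throughout.
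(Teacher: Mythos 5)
Your plan is correct and follows essentially the same route as the paper: the forward direction is the same one-line observation, and the converse is the same minimal-counterexample-plus-chord-elimination argument (the paper runs it once, for a shortest non-monochromatic cycle with exactly two colour changes, and derives both the NAC and the Cartesian conditions from the resulting fact that \emph{every} non-monochromatic cycle has at least three changes, whereas you run the same reduction twice). The chord case analysis you defer --- chord inside one arc versus chord joining the two arcs, split by the chord's colour --- is exactly the bookkeeping the paper carries out, and it closes as you expect.
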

\begin{proof}
	It is trivial that every Cartesian NAC-coloring satisfies the condition above:
	if the colors in a non-monochromatic induced cycle would change only twice,
	then the two vertices where the changes occur were connected by a red and a blue path simultaneously.
	
	In order to show that an edge coloring $\delta$  of $G$ satisfying the given condition is a Cartesian NAC-coloring,
	we follow the idea of the injectivity part of the proof of \cite[Theorem~2.2]{Klavzar2005}.
	We show that all non-monochromatic cycles have at least three changes of colors, not only induced ones.
	Suppose for contradiction that there is a cycle with only two color changes.
	Let $C$ be such a cycle of the minimal length and $P_\red$ and $P_\blue$ be the red and blue paths it consists of.
	By assumption, $C$ is not induced, namely, there is an edge $uv\in E_G$ such that $u,v$ are in $C$ but $uv$ is not in $C$.
	Suppose $u,v$ are both in $P_\red$, then $\delta(uv)$ cannot be red since this would contradict the minimality of $C$.
	But $\delta(uv)$ cannot be blue either since then the part of $P_\red$ from $u$ to $v$ together with the edge $uv$
	would create a cycle with two color changes shorter than $C$.
	Similarly, $u,v$ cannot be both in $P_\blue$.
	Notice that we have also covered the case where $u$ or $v$ is a vertex where a color change occurs.
	The only case left is when $u$ is a vertex with two incident blue edges of $C$ and $v$ with two such red edges.
	But this also gives a contradiction as independently of $\delta(uv)$ being red or blue,
	there would be a non-monochromatic cycle with two color changes shorter than $C$.
	Therefore, all non-monochromatic cycles have at least three color changes,
	hence, no two distinct vertices are connected by a red and blue path simultaneously.
	This also implies that there are at least two red and two blue edges in every non-monochromatic cycle.
	Hence, $\delta$ is a Cartesian NAC-coloring.
\end{proof}

After this detour to definitions from the literature we now relate Cartesian NAC-colorings to \APclasses.
\begin{lemma}
	\label{lem:monochromaticAPclassIffCartesian}
	Let $(G,\rho)$ be walk-independent.
	A surjective edge coloring $\delta$ of $G$ by red and blue is a Cartesian NAC-coloring
	if and only if every \APclass{} is monochromatic.
\end{lemma}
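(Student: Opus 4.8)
The plan is to prove the two implications separately. The implication $(\Rightarrow)$ is purely combinatorial and relies only on \Cref{lem:cartNACIffCondA}, whereas $(\Leftarrow)$ is where walk-independence (through \Cref{eq:zeroSumAPclass}) and injectivity of $\rho$ enter.

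For $(\Rightarrow)$, assume $\delta$ is a Cartesian NAC-coloring. Since the number of color changes along any cycle is even, I would first note that a non-monochromatic $3$-cycle would need at least three changes by \Cref{lem:cartNACIffCondA} (a $3$-cycle is induced), which is impossible by parity; hence every $3$-cycle is monochromatic and $\Trel$ preserves colors. For a $4$-cycle $C$ realizing a $\Prel$-relation between two of its opposite edges, I would distinguish cases: if $C$ has a chord, it splits into two triangles sharing that chord, so $C$ is monochromatic; if $C$ is induced and non-monochromatic, then \Cref{lem:cartNACIffCondA} forces at least three — hence, by parity, exactly four — color changes, so the colors alternate around $C$ and opposite edges agree. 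In all cases $\Prel$ preserves colors. As an \APclass{} is a class of the reflexive-transitive closure of $\Trel\cup\Prel$ by \Cref{def:APclass}, every \APclass{} is monochromatic.

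For $(\Leftarrow)$, assume every \APclass{} is monochromatic. The coloring $\delta$ is surjective by hypothesis, so it remains to verify the Cartesian condition and the NAC condition. To get the Cartesian condition, suppose toward a contradiction that distinct vertices $u,v$ are joined both by a red path $P_\red$ and by a blue path $P_\blue$, and let $C$ be the closed walk obtained by concatenating $P_\red$ with the reversal of $P_\blue$. For each \APclass{} $r$ all of whose edges are red, the blue path contributes no edge of $r$ to $C$, so $r\cap C = r\cap P_\red$ with the orientation inherited from $P_\red$, and walk-independence via \Cref{eq:zeroSumAPclass} yields $\sum_{\oriented{a}{b}\in r\cap P_\red}(\rho(b)-\rho(a))=(0,0)$. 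Since every edge of $P_\red$ belongs to a (necessarily red) \APclass{}, the oriented edges of $P_\red$ split according to these classes, and summing the above identities gives $\rho(v)-\rho(u)=\sum_{\oriented{a}{b}\in P_\red}(\rho(b)-\rho(a))=(0,0)$, contradicting injectivity of $\rho$.

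Finally, with surjectivity and the Cartesian condition available, the NAC condition follows by a short parity argument: along a non-monochromatic cycle the number of color changes is even and positive; it cannot be $2$, since then the two change-vertices would be linked by a monochromatic red arc and a monochromatic blue arc, violating the Cartesian condition; hence there are at least four changes, so at least two edges of each color. Thus $\delta$ is a NAC-coloring, and being Cartesian it is a Cartesian NAC-coloring. I expect the only care needed is the bookkeeping with possibly non-simple walks, which is harmless because \Cref{eq:zeroSumAPclass} holds for arbitrary closed walks and telescoping is insensitive to repetitions; the single substantive step is the contradiction derived from walk-independence in $(\Leftarrow)$.
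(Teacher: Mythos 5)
Your proof is correct; both implications go through. The underlying mechanism is the same as the paper's --- walk-independence plus injectivity of $\rho$ forcing a separation property of \APclasses{} --- but you route the argument differently. For ($\Rightarrow$) the paper argues directly from the definitions (a 3-cycle is monochromatic because it cannot carry two edges of each color, and opposite edges of a 4-cycle agree by the Cartesian condition), whereas you detour through \Cref{lem:cartNACIffCondA} and a parity count; both are fine, and your separate treatment of chorded 4-cycles via two triangles is the right care to take since $\Prel$ is defined on 4-cycle subgraphs, not only induced ones. For ($\Leftarrow$) the paper simply cites \Cref{lem:APclassesAreEdgeCuts}: every \APclass{} separates the endpoints of each of its edges and any two distinct vertices are separated by some \APclass{}, which immediately gives both the NAC condition (the class of any cycle edge must recur on the cycle) and the Cartesian condition (a separating monochromatic class meets every path between the two vertices). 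Your telescoping computation on the closed walk obtained by concatenating $P_\red$ with the reversal of $P_\blue$ is precisely a re-proof of the relevant instance of that lemma from \Cref{eq:zeroSumAPclass}, and your subsequent derivation of the NAC condition from the Cartesian condition by counting color changes is also sound --- it is essentially the closing observation in the proof of \Cref{lem:cartNACIffCondA}. What the paper's route buys is brevity, since the edge-cut lemma is already available; what yours buys is self-containedness of this step.
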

\begin{proof}
	$\implies:$ since $\delta$ is a NAC-coloring, 3-cycles are monochromatic.
	The opposite edges of a 4-cycle in a Cartesian NAC-coloring have the same color as well.
	Hence, edges in relation $\Trel$ or $\Prel$ have the same color,
	which gives that \APclasses{} are monochromatic.

	$\impliedby:$
	First, we show that $\delta$ is a NAC-coloring.
	Consider a cycle $C$.
	If $uv$ is an edge in $C$ which belongs to an \APclass{} $r$,
	there is another edge $e$ in $C$ belonging to $r$
	since $r$ separates $u$ and $v$ by \Cref{lem:APclassesAreEdgeCuts}.
	Therefore, if a color occurs in $C$, it occurs at least twice as $\delta(uv)=\delta(e)$
	by the assumption that \APclasses{} are monochromatic.

	It cannot happen that two vertices are connected by a red and a blue path simultaneously,
	since they are separated by an \APclass{} by \Cref{lem:APclassesAreEdgeCuts}.
	Hence, $\delta$ is a Cartesian NAC-coloring.
\end{proof}

Finally Cartesian NAC-colorings form a correspondence to flexible walk-independent frameworks.
\begin{corollary}
	\label{cor:FlexIffCartNAC}
	For a walk-independent framework $(G,\rho)$, the following are equivalent:
	\begin{enumerate}
	  \item $(G,\rho)$ is flexible,
	  \item $G$ has a Cartesian NAC-coloring,
	  \item $G$ is a non-trivial subgraph of the Cartesian product of graphs.
	\end{enumerate}
\end{corollary}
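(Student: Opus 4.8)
The plan is to assemble the corollary from pieces that are already in place. The equivalence of (i) and (iii) is almost immediate: \Cref{thm:main} gives one direction, namely that flexibility of a walk-independent $(G,\rho)$ with $\ell\geq 2$ \APclasses{} yields that $G$ is a non-trivial subgraph of a Cartesian product of $\ell$ graphs (and \Cref{prop:flexImpliesTwoAPclasses} guarantees $\ell\geq 2$ when $(G,\rho)$ is flexible). For the converse I would argue that if $G$ is a non-trivial subgraph of $Q_1\cartProd\dots\cartProd Q_k$ via an injective homomorphism $h$ with $|\pi_i(h(G))|\geq 2$ for all $i$, then pulling back the partition of the edges of the Cartesian product by "direction" $i$ gives a surjective $2$-coloring (e.g. color an edge red if it changes coordinate $1$, blue otherwise), and one checks directly that this colors no induced cycle with only two changes — in a Cartesian product every cycle has an even number of edges in each coordinate direction, so a non-monochromatic induced cycle that uses direction $1$ uses it at least twice and uses some other direction at least twice, giving at least four changes. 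By \Cref{lem:cartNACIffCondA} this is a Cartesian NAC-coloring, so $G$ has one; hence (iii) implies (ii).

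Next I would close the loop through (ii). The equivalence of (ii) and the existence of at least two \APclasses{} is exactly \Cref{lem:monochromaticAPclassIffCartesian} together with a counting remark: a Cartesian NAC-coloring forces every \APclass{} to be monochromatic, and since the coloring is surjective there must be at least two monochromatic \APclasses{} (one carrying a red edge, one a blue edge — these are distinct since an \APclass{} is monochromatic); conversely, if $G$ has $\ell\geq 2$ \APclasses{}, pick one class, color it red and everything else blue, and \Cref{lem:monochromaticAPclassIffCartesian} (applied in the "if" direction, as every \APclass{} is then monochromatic) shows this is a Cartesian NAC-coloring. Finally (ii) implies (i) via \Cref{thm:nacflexible}, since a Cartesian NAC-coloring is in particular a NAC-coloring, so $G$ admits a flexible framework — but to conclude that \emph{this} framework $(G,\rho)$ is flexible I would instead route (ii) $\Rightarrow$ "at least two \APclasses{}" $\Rightarrow$ (i), the last step being \Cref{prop:twoAPclassesImplyFlex}, which uses walk-independence.

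So the intended cycle of implications is (i) $\Rightarrow$ (iii) by \Cref{thm:main}, (iii) $\Rightarrow$ (ii) by the Cartesian-product coloring argument plus \Cref{lem:cartNACIffCondA}, and (ii) $\Rightarrow$ two \APclasses{} $\Rightarrow$ (i) by \Cref{lem:monochromaticAPclassIffCartesian} and \Cref{prop:twoAPclassesImplyFlex}; the equivalence with "at least two \APclasses{}" is thereby also recorded. The one slightly delicate point — the main obstacle — is the $\ell=\infty$ case: \Cref{thm:main}'s Cartesian-product conclusion is stated only for finite $\ell$, so for (i) $\Rightarrow$ (iii) when there are infinitely many \APclasses{} I would need to observe that the same quotient-graph construction $h_i\colon V_G\to V_{Q_i}$ from the proof of \Cref{thm:main} still produces an injective homomorphism into the (now infinite) Cartesian product $\square_{i\in I} Q_i$ with $|\pi_i(h(G))|\geq 2$ for every $i$ — injectivity and the homomorphism property are local and use only \Cref{lem:APclassesAreEdgeCuts}, so they go through verbatim; one just has to note that each vertex has only finitely many coordinates differing from a fixed basepoint, which is the "weak" Cartesian product, and make sure the definition of non-trivial subgraph is read accordingly. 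With that caveat addressed, everything else is bookkeeping among results already proved.
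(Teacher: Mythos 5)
Your handling of (i) $\Leftrightarrow$ (ii) is exactly the paper's: flexibility gives at least two \APclasses{} by \Cref{prop:flexImpliesTwoAPclasses}, colouring one class red and the rest blue gives a Cartesian NAC-coloring by \Cref{lem:monochromaticAPclassIffCartesian}, and conversely a Cartesian NAC-coloring forces at least two (monochromatic) \APclasses{}, whence flexibility by \Cref{prop:twoAPclassesImplyFlex}; you are also right to avoid \Cref{thm:nacflexible}, which only produces \emph{some} flexible realization rather than a flex of the given $\rho$. Where you diverge is the equivalence with (iii): the paper disposes of (ii) $\Leftrightarrow$ (iii) in one line by combining \Cref{lem:cartNACIffCondA} with the cited characterization \cite[Theorem~2.2]{Klavzar2005}, whereas you prove (i) $\Rightarrow$ (iii) through the quotient construction of \Cref{thm:main} and then (iii) $\Rightarrow$ (ii) by hand. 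Your route is more self-contained and it surfaces the genuine subtlety of infinitely many \APclasses{} (where the weak Cartesian product is needed), which the paper's one-line citation glosses over; the paper's route is shorter but leans on a result stated for finite graphs.

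However, your justification of (iii) $\Rightarrow$ (ii) has a gap. The claim that every cycle in a Cartesian product has an \emph{even} number of edges in each coordinate direction is false: if $Q_1$ contains a triangle, the product contains a cycle with three edges in direction $1$. What is true, and what the NAC condition needs, is only that a closed walk using direction $i$ at all must use it at least twice (its projection to $Q_i$ is a closed walk). But even granting two red and two blue edges on a cycle, ``at least four colour changes'' does not follow: a cycle whose edges are coloured red, red, blue, blue in cyclic order has two edges of each colour and exactly two changes. The missing step is to rule out a decomposition of a cycle into one red path and one blue path, and this is where the product structure enters: a path consisting only of direction-$1$ edges from $u$ to $v$ forces $u$ and $v$ to agree in every coordinate except the first, while a path avoiding direction $1$ forces them to agree in the first coordinate, so $u=v$, a contradiction. (Equivalently, you can bypass \Cref{lem:cartNACIffCondA} and verify the definition of a Cartesian NAC-coloring directly: no two distinct vertices are joined by both a red and a blue path, for the same reason.) With that repair your argument closes; everything else is correct bookkeeping.
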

\begin{proof}
	By \Cref{prop:flexImpliesTwoAPclasses}, if $(G,\rho)$ is flexible,
	then $G$ has at least two \APclasses{}.
	Coloring the edges of some of the \APclasses{} by red and the rest by blue
	gives a Cartesian NAC-coloring by \Cref{lem:monochromaticAPclassIffCartesian}.
	On the other hand, if $G$ has a Cartesian NAC-coloring,
	there must be at least two \APclasses{} by \Cref{lem:monochromaticAPclassIffCartesian},
	hence $(G,\rho)$ is flexible by \Cref{prop:twoAPclassesImplyFlex}.
	
	The existence of a Cartesian NAC-coloring is equivalent to being
	a non-trivial subgraph of the Cartesian product of graphs
	by \Cref{lem:cartNACIffCondA} and \cite[Theorem~2.2]{Klavzar2005}.
\end{proof}
We remark that another characterization of a graph being a non-trivial subgraph
of the Cartesian product of graphs is given in~\cite{Klavzar2002}.
This characterization was used to prove that determining whether
a general graph is a non-trivial subgraph of the Cartesian product of graphs is NP-complete in~\cite{Hellmuth2013},
which is apparently not the case for graphs that admit a walk-independent framework by our results.

\subsection{\Pframeworks{}}\label{sec:pframe}
In this subsection, we show that the results generalize those on frameworks consisting of parallelograms only and those where diagonals of 4-cycles are added \cite{GLbracing}.

\begin{definition}
	Let $G$ be a connected graph. Recall the relation $\Prel$ on the set of edges, where
	two edges are in relation if they are opposite edges of a 4-cycle subgraph of $G$.
	An equivalence class of the reflexive-transitive closure of $\Prel$ is called a \emph{ribbon}.
    If every ribbon of~$G$ is an edge cut and
	$\rho$ is a parallelogram placement of $G$, we call the framework~$(G,\rho)$ a \emph{\Pframework{}}.
\end{definition}
Notice that there are no odd cycles in \Pframeworks{} \cite[Theorem~3.9]{GLbracing}. However, we can get triangles by adding diagonals to 4-cycles. We call this a bracing.
\begin{definition}
	A \emph{braced \Pframework{}} is a framework $(G,\rho)$ such that
	${G=(V_G,E_{G'}\cup E_d)}$ where $E_{G'}$ and $E_d$ are two non-empty disjoint sets where the edges in $E_d$ correspond to diagonals of some 4-cycles of \emph{the underlying unbraced subgraph} $G'=(V_G,E_{G'})$
	(these diagonals are also called \emph{braces})
	and $(G',\rho)$ is a \Pframework.
\end{definition}
Now we can relate these frameworks to walk-independence.
\begin{lemma}
    \label{lem:zeroSumPframework}
    A braced \Pframework{} $(G,\rho)$ is walk-independent.
\end{lemma}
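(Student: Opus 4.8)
The plan is to verify the defining equation \eqref{eq:zeroSumAPclass} of walk-independence for a braced \Pframework{} $(G,\rho)$: for every \APclass{} $r$ of $G$ and every closed walk $C$ in $G$, the sum $\sum_{\oriented{u}{v}\in r\cap C}(\rho(v)-\rho(u))$ vanishes. Since $\rho$ is by hypothesis a parallelogram placement of the unbraced subgraph $G'$, and the diagonals in $E_d$ are chords of $4$-cycles of $G'$ (so they add no new induced $4$-cycles and create only triangles), $\rho$ is a parallelogram placement of $G$ as well; thus only the zero-sum condition needs checking.

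The first key step is to understand the relationship between the \APclasses{} of $G$ and the ribbons of $G'$. Each brace $d=uv\in E_d$ is a diagonal of some $4$-cycle $(u,x,v,y,u)$ of $G'$, and therefore $d$ lies in the two triangles $(u,x,v,u)$ and $(u,y,v,u)$ of $G$. Hence in $G$ the relation $\Trel$ forces $d$ into the same \APclass{} as the $G'$-edges $ux, xv$ (and $uy, yv$), which by $\Prel$ in $G'$ belong to the two ribbons through those sides of the $4$-cycle. Consequently every \APclass{} of $G$ is a union of ribbons of $G'$ together with the braces joining them; conversely each ribbon $\rho_j$ of $G'$ is contained in exactly one \APclass{} of $G$. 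I would make this precise and record: for any \APclass{} $r$ of $G$, writing $r\cap E_{G'}$ as a disjoint union of ribbons $\rho_{j_1},\dots,\rho_{j_k}$, the "geometric displacement" along a walk contributed by $r$ splits into the contributions of these ribbons plus the contributions of the braces in $r$.

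The second step handles the walk. Given a closed walk $C$ in $G$, I would first argue we may reduce to closed walks in $G'$: whenever $C$ traverses a brace $d=uv$ (diagonal of the $G'$-$4$-cycle $(u,x,v,y,u)$), replace that step by the two-edge detour $u\to x\to v$ through $G'$; call the resulting closed walk $C'$. I then compare $\sum_{\oriented{a}{b}\in r\cap C}(\rho(b)-\rho(a))$ with $\sum_{\oriented{a}{b}\in r\cap C'}(\rho(b)-\rho(a))$. If $d\notin r$, then since the \APclass{} $r$ is a union of ribbons-with-braces, the sides $ux, xv$ of the $4$-cycle are also not in $r$ (they lie in the same \APclass{} as $d$), so this replacement changes neither side's sum. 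If $d\in r$, then $ux\in r$ or $xv\in r$ — in fact by the parallelogram identity $\rho(x)-\rho(u)=\rho(v)-\rho(y)$ one side is parallel to the opposite side — and one checks that $\rho(x)-\rho(u)$ plus $\rho(v)-\rho(x)$ equals $\rho(v)-\rho(u)$, i.e. the detour reproduces exactly the diagonal's displacement; moreover both sides of the detour lie in $r$ (they are in the same \APclass{} as $d$), so the $r$-restricted sum is again unchanged. After finitely many such replacements we obtain a closed walk $C'$ entirely in $G'$ with the same $r$-restricted displacement.

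The final step applies the \Pframework{} structure of $(G',\rho)$. Now $r\cap E_{G'}=\rho_{j_1}\sqcup\dots\sqcup\rho_{j_k}$ is a disjoint union of ribbons, so
\[
\sum_{\oriented{a}{b}\in r\cap C'}(\rho(b)-\rho(a))
=\sum_{i=1}^{k}\ \sum_{\oriented{a}{b}\in \rho_{j_i}\cap C'}(\rho(b)-\rho(a))\,.
\]
For each ribbon $\rho_{j_i}$: since $(G',\rho)$ is a \Pframework{}, $\rho_{j_i}$ is an edge cut of $G'$, and all its edges are translates of one fixed vector $\mathbf{t}_i$ (this is the standard fact for ribbons in parallelogram placements — opposite edges of a $4$-cycle are equal vectors, so $\Prel$-equivalent edges of $G'$ all carry the same vector up to sign, with a consistent sign convention once the ribbon is given a transversal orientation). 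Orienting $\rho_{j_i}$ as an edge cut splits $V_{G'}$ into two sides; a closed walk crosses from one side to the other equally often in each direction, so the signed count of crossings is zero, and the $\rho_{j_i}$-restricted sum over $C'$ is that signed count times $\mathbf{t}_i$, hence $(0,0)$. Summing over $i$ gives the claim.

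The main obstacle I anticipate is the bookkeeping in the second and fourth steps: keeping the orientations consistent (edges of a closed walk come with a direction, ribbons as edge cuts come with a transversal direction, and these must be matched so that "signed crossing number of a closed walk across an edge cut is zero" is applied correctly), and verifying cleanly that each brace lies in the \APclass{} of the two $4$-cycle sides on each of its two triangle sides — in particular that the two pairs of sides of the underlying $4$-cycle all end up in one \APclass{} of $G$, so that removing/inserting braces never redistributes displacement between different \APclasses{}. None of this is deep, but it is where the argument must be written carefully; I would isolate the statement "every \APclass{} of $G$ is a union of ribbons of $G'$ joined by braces, and every brace carries the same vector as the $4$-cycle side parallel to it" as an explicit sub-claim before doing the walk manipulation.
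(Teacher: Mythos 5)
Your proposal is correct and follows essentially the same route as the paper's proof: replace each traversed brace by the two sides of its underlying 4-cycle (which lie in the same \APclass{}), thereby reducing to a closed walk in the unbraced \Pframework{} $(G',\rho)$, and then split the \APclass{}-restricted sum into per-ribbon contributions, each of which vanishes. The only difference is that where the paper simply cites \cite[Lemmas~3.2 and~3.7]{GLbracing} for the vanishing of the ribbon-restricted sum over a closed walk, you sketch a direct argument via the edge-cut and constant-vector properties of ribbons; that is fine, and the orientation-consistency bookkeeping you flag as the delicate point is precisely the content of those cited lemmas.
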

\begin{proof}
    Let $r$ be an \APclass{} and $C$ be a closed walk in $G$.
    The sum in \Cref{eq:zeroSumAPclass} is preserved if we replace an edge $uv$ in~$C$
    which is a brace of the 4-cycle $(u,x,v,z,u)$ by $ux$ and~$xv$
    since they belong to the same \APclass{}, see \Cref{fig:replacingBrace}.
    Hence, we can assume that $C$ is in the underlying unbraced \Pframework{} $(G',\rho)$.
    Let $r_1,\ldots, r_k$ be all ribbons of $G'$ that occur in $r\cap C$.
    By \cite[Lemmas 3.2 and 3.7]{GLbracing} (or their infinite dimensional analogues),
    we have
	\begin{equation*}
		\sum_{\oriented{u}{v} \in r_i\cap C} (\rho(v)-\rho(u)) = (0,0)
	\end{equation*}
	for every ribbon $r_i$.
	Summing up the equation for $i\in \{1,\ldots,k\}$ gives the desired statement,
	since each ribbon $r_i$ is a subset of $r$ by construction of the corresponding equivalences.
    \begin{figure}[ht]
        \centering
         \begin{tikzpicture}
             \begin{scope}
                 \node[gvertex,label={below:$u$}] (a) at (0,0) {};
                 \node[gvertex,label={below right:$z$}] (b) at (1,0) {};
                 \node[gvertex,label={above:$v$}] (c) at (1,1.3) {};
                 \node[gvertex,label={above left:$x$}] (d) at (0,1.3) {};
                 \draw[dedge] ($(a)-(1.1,0.3)$)--(a);
                 \draw[dedge] (c)edge($(c)+(0.9,0.5)$);
                 \draw[dedge] (a)edge(c);
                 \draw[edge,colgraphe!10!white] (a)edge(d) (b)edge(c) (a)edge(b) (c)edge(d);
             \end{scope}
             \begin{scope}[xshift=5cm]
                 \node[gvertex,label={below:$u$}] (a) at (0,0) {};
                 \node[gvertex,label={below right:$z$}] (b) at (1,0) {};
                 \node[gvertex,label={above:$v$}] (c) at (1,1.3) {};
                 \node[gvertex,label={above left:$x$}] (d) at (0,1.3) {};
                 \draw[dedge] ($(a)-(1.1,0.3)$)--(a);
                 \draw[dedge] (c)edge($(c)+(0.9,0.5)$);
                 \draw[dedge,col1] (a)edge(d);
                 \draw[dedge,col3] (d)edge(c);
                 \draw[edge,colgraphe!10!white] (a)edge(c) (b)edge(c) (a)edge(b);
             \end{scope}
         \end{tikzpicture}
        \caption{Replacing a brace by edges of the underlying unbraced subgraph.}
        \label{fig:replacingBrace}
    \end{figure}
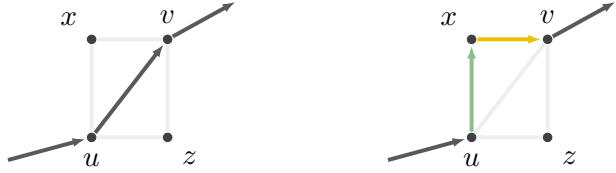
\end{proof}
Finally we get the following known result in terms of the notation in this paper.
\begin{corollary}
    \label{cor:Pframeworks}
    Let $(G,\rho)$ be a braced \Pframework{}. The following statements are equivalent:
    \begin{enumerate}
        \item\label{it:PfRigid} $(G,\rho)$ is flexible,
        \item\label{it:PfInfRigid} $(G,\rho)$ is infinitesimally flexible,
        \item\label{it:PfOneAPclass} $G$ has at least two \APclasses{},
        \item\label{it:PfNoNAC} $G$ has a Cartesian NAC-coloring.
    \end{enumerate}
    If $G$ has $\ell$ \APclasses{}, where $2\leq \ell\in \NN$, then
    $G$ is a non-trivial subgraph of the Cartesian product of $\ell$ graphs. 
\end{corollary}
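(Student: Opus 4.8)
The plan is to derive every equivalence from results already established. By \Cref{lem:zeroSumPframework}, a braced \Pframework{} $(G,\rho)$ is walk-independent, so both \Cref{thm:main} and \Cref{cor:FlexIffCartNAC} apply. The former yields \ref{it:PfRigid} $\Leftrightarrow$ \ref{it:PfOneAPclass} and the closing statement about $\ell$ Cartesian factors when $\ell\in\NN$; the latter (which needs only walk-independence) yields \ref{it:PfRigid} $\Leftrightarrow$ \ref{it:PfNoNAC} together with the non-trivial-subgraph characterization. Thus only \ref{it:PfInfRigid} remains to be incorporated.

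To add \ref{it:PfInfRigid}, I would invoke the infinitesimal-flexibility part of \Cref{thm:main}, whose extra hypothesis is that every $3$-cycle of $G$ forms a non-degenerate triangle in $\rho$; I plan to verify this directly. Since a \Pframework{} has no odd cycles, the underlying unbraced subgraph $G'$ is bipartite, so every $3$-cycle of $G$ uses at least one brace. A brace joins two vertices of the same bipartition class, so a parity count rules out $3$-cycles with exactly two braces; hence a $3$-cycle contains exactly one brace (the ``triple-brace'' case is addressed below). Relabel it $u_1u_2$, with $u_1u_3,u_2u_3\in E_{G'}$. The brace $u_1u_2$ is a diagonal of a $4$-cycle $(u_1,x,u_2,z)$ of $G'$, which is induced in $G'$: a chord $u_1u_2$ is impossible because $u_1u_2\notin E_{G'}$, and a chord $xz$ would produce a triangle in the bipartite $G'$. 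Hence $(u_1,x,u_2,z)$ is a non-degenerate parallelogram in $\rho$. If $u_3\in\{x,z\}$, the triangle is one of the two triangles cut off by the diagonal $u_1u_2$, hence non-degenerate, since no three vertices of a non-degenerate parallelogram are collinear. If $u_3\notin\{x,z\}$, then $(u_1,x,u_2,u_3)$ is again a $4$-cycle of $G'$ that is induced (same argument, using triangle-freeness of $G'$ to exclude the chord $xu_3$), so it is a non-degenerate parallelogram with $u_1u_2$ as a diagonal; therefore $\rho(u_3)$ is off the line $\rho(u_1)\rho(u_2)$ and the triangle is non-degenerate.

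The step I expect to require the most care is closing the ``triple-brace'' case, i.e.\ excluding (or directly handling) a $3$-cycle all of whose edges are braces, since parity does not rule this out. I expect it to be excluded by the \Pframework{} axiom that every ribbon is an edge cut, possibly together with the fact that a graph admitting a parallelogram placement has no induced complete bipartite subgraph on $2+s$ vertices with $s\geq 3$; alternatively, one can bypass the combinatorics and argue geometrically that even such a triangle is non-degenerate, as each of its edges is a diagonal of a non-degenerate parallelogram. Once all $3$-cycles are known to form non-degenerate triangles, \Cref{thm:main} delivers \ref{it:PfInfRigid}, and the corollary is complete.
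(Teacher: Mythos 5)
Your overall strategy is the same as the paper's: walk-independence comes from \Cref{lem:zeroSumPframework}, after which \Cref{thm:main} and \Cref{cor:FlexIffCartNAC} deliver everything except the non-degeneracy of the 3-cycles that is needed for the infinitesimal statement. Your parity argument and your treatment of a 3-cycle containing exactly one brace are correct, and are in fact more careful than the paper, which disposes of the whole issue with the one-line assertion that ``a 3-cycle can occur only in a braced 4-cycle, whose vertices are not collinear by definition.''

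The triple-brace case, which you explicitly leave open, is a genuine gap, and neither of your two proposed repairs works. Take $G'$ to be the $3$-cube $K_2\cartProd K_2\cartProd K_2$ with vertex set $\{0,1\}^3$ and $\rho(a,b,c)=a\,e_1+b\,e_2+c\,e_3$ for $e_1=(2,-1)$, $e_2=(1,0)$, $e_3=(0,1)$. This is a \Pframework{}: $\rho$ is injective, the induced 4-cycles are exactly the six faces, each a non-degenerate parallelogram spanned by two of the $e_i$, and the three ribbons (the coordinate directions) are edge cuts. The vertices $110$, $011$, $101$ are pairwise opposite in a face, so the three segments joining them are legitimate braces, and adding all three produces a 3-cycle consisting entirely of braces; the configuration is therefore not excluded combinatorially, and the ``no induced $K_{2,s}$ with $s\geq 3$'' observation does not touch it. Moreover $\rho(110)=(3,-1)$, $\rho(101)=(2,0)$, $\rho(011)=(1,1)$ are collinear, so this triangle is degenerate even though each of its edges is a diagonal of a non-degenerate parallelogram --- that hypothesis only keeps the two endpoints of each edge apart and says nothing about where the third vertex lies relative to their line. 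Hence the hypothesis of the infinitesimal part of \Cref{thm:main} genuinely fails for some braced \Pframeworks{}, and the implication from infinitesimal flexibility to statement \ref{it:PfOneAPclass} needs an extra argument in this case (for instance, one could show that the $\Trel$-relation coming from a degenerate all-brace triangle is already implied by the relations through the non-degenerate triangles inside the braced 4-cycles, which is what happens in the example above). To be fair, the paper's own proof rests on the same unjustified assertion, so this is a gap you share with the authors rather than one you introduced; but as written your proof does not close it.
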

\begin{proof}
    The statement follows from \Cref{thm:main} and \Cref{lem:zeroSumPframework} and
    the fact that a 3-cycle can occur only in a braced 4-cycle, whose vertices are not collinear by definition.
\end{proof}
The equivalence of \ref{it:PfRigid} and \ref{it:PfNoNAC} was proven for the finite
case in~\cite{GLbracing} and for the infinite case in~\cite{DLinfinite}.
For \Pframeworks{} obtained from parallelogram tilings the equivalence with \ref{it:PfInfRigid}
follows from \cite{power2021parallelogram}.
Since the number of connected components of the bracing graph as defined in~\cite{GLbracing}
is the same as the number of \APclasses{}, Theorem~1.1 of~\cite{GLbracing}
is implied by \Cref{cor:Pframeworks}.

\subsection{\TPframeworks{}}\label{sec:tpframe}
We define a class of graphs which ``consist of 3- and 4-cycles'' and whose parallelogram placements are walk-independent.
The corresponding frameworks are called \TPframeworks{}. For their definition we first need the notion of a simplicial complex.

\begin{definition}
	An \emph{abstract simplicial complex} $X$ is a (possibly infinite) collection of finite sets such that if $S$ is in $X$,
	then any non-empty subset of $S$ is in $X$.
	The union of all sets in $X$ is the \emph{vertex set} of $X$.
	Each element $S$ of $X$ is called a \emph{face}, the \emph{dimension} of $S$ is defined to be~$|S|-1$.
	The \emph{dimension} of $X$ is the supremum of the dimensions of the faces.
	The $k$-skeleton of $X$ is the abstract simplicial complex formed by all faces of $X$ of dimension at most $k$.
\end{definition}
In the following, we abuse the notation and identify singletons $\{v\}\in X$ with $v$ and we write $uv$ instead of $\{u,v\}$
for 1-dimensional faces of $X$ or edges of a graph.

\begin{definition}
	Let $u,v$ be vertices of an abstract simplicial complex $X$.
	A \emph{walk from~$u$ to~$v$} is a sequence of vertices $W=(u=u_0, u_1, \dots, u_n=v), n\geq 1$, such that
	$\{u_{i-1},u_i\}$ is in~$X$ for all $1\leq i\leq n$.
	Notice that we allow $u_{i-1}=u_i$.
	If $u=v$, the sequence $W$ is called a \emph{closed walk based at $u$}.

	We define a \emph{move} on a walk $(u_0, u_1, \dots, u_n)$ in $X$ by omitting vertex $u_i$
	for some $1\leq i\leq n-1$ such that $\{u_{i-1},u_i,u_{i+1}\}$ is a face of $X$,
	or the inverse operation.
	Two closed walks based at the same point are \emph{homotopic}
	if one can be obtained from the other by a sequence of moves or inverse moves.
\end{definition}

\begin{definition}
	An abstract simplicial complex $X$ is called \emph{simply connected}
	if its 1-skeleton is a connected graph
	and every closed walk based at $u$, where $u$ is a fixed vertex of~$X$, is homotopic to $(u,u)$.
	Notice that this is independent of the choice of $u$.
\end{definition}

We now need a specific simplicial complex.
\begin{definition}
	Let $G=(V_G,E_G)$ be a graph.
	We define a 2-dimensional abstract simplicial complex
	\begin{align*}
		\asc{G} = V_G &\cup E_G\\ &\cup \sst{\{u,v,w\}}{(u,v,w,u) \text{ is a 3-cycle in } G} \\
			&\cup \sst{\{u,w\}}{(u,v,w,z,u) \text{ is a 4-cycle in } G \text{ for some } v,z \in V_G}\\
			&\cup \sst{\{u,v,w\}}{(u,v,w,z,u) \text{ is an induced 4-cycle in } G \text{ for some } z \in V_G}\,.
	\end{align*}
	Let $\rho$ be a parallelogram placement of $G$
	such that every 3-cycle is a non-degenerate	triangle.
	If $\asc{G}$ is simply connected, the framework $(G, \rho)$ is called a \emph{\TPframework}.
\end{definition}

We remark that once we allow vertices to repeat in (closed) walks (in graph theoretical sense) in a graph $G$ (although there are no loops),
then there is an obvious correspondence with (closed) walks in $\asc{G}$ in the sense of the definition above.
Hence, we do not distinguish them from now on.

Clearly, there are \TPframeworks{} which are not \Pframeworks{}, but also the converse is true:
\Cref{fig:PnotTP} shows a \Pframework{} which is not a \TPframework{}.
The left framework in \Cref{fig:crossingprop-ex} is neither a \Pframework{} nor a \TPframework{},
but still is walk-independent.
\begin{figure}[ht]
    \centering
    \begin{tikzpicture}[scale=1.5]
        \coordinate (x) at (0.4,0.4);
        \coordinate (y) at (1,0);
        \coordinate (z) at (0,1);
        \foreach \i in {1,2,3}
        {
            \foreach \j in  {1,2,3}
            {
                \foreach \k in {0,1,2}
                {
                    \coordinate (c\i\j\k) at ($\i*(x)+\j*(y)+\k*(z)$);
                }
            }
        }
        \node[gvertex] (a110) at (c110) {};
        \node[gvertex] (a120) at (c120) {};
        \node[gvertex] (a130) at (c130) {};
        \node[gvertex] (a210) at (c210) {};
        \node[gvertex] (a220) at (c220) {};
        \node[gvertex] (a230) at (c230) {};
        \node[gvertex] (a320) at (c320) {};
        \node[gvertex] (a330) at (c330) {};
        \node[gvertex] (a111) at (c111) {};
        \node[gvertex] (a121) at (c121) {};
        \node[gvertex] (a131) at (c131) {};
        \node[gvertex] (a211) at (c211) {};
        \node[gvertex] (a221) at (c221) {};
        \node[gvertex] (a231) at (c231) {};
        \node[gvertex] (a321) at (c321) {};
        \node[gvertex] (a331) at (c331) {};
        \node[gvertex] (a112) at (c112) {};
        \node[gvertex] (a122) at (c122) {};
        \node[gvertex] (a212) at (c212) {};
        \node[gvertex] (a222) at (c222) {};
        \draw[edge,col1] (a212)edge(a222) (a211)edge(a221)  (a210)edge(a220);
        \draw[edge,col2] (a320)edge(a330) (a321)edge(a331) (a220)edge(a230);
        \draw[edge,col3] (a110)edge(a210) (a120)edge(a220) (a130)edge(a230) (a111)edge(a211) (a131)edge(a231) (a112)edge(a212) (a122)edge(a222);
        \draw[edge,col4] (a220)edge(a320) (a230)edge(a330) (a221)edge(a321) (a231)edge(a331);

        \foreach \a in {11,12,13,21,23,32,33}
        {
            \draw[edge,col5] (a\a0)edge(a\a1);
        }
        \foreach \a in {11,12,21,22}
        {
            \draw[edge,col6] (a\a1)edge(a\a2);
        }

        \draw[edge,col1] (a110)edge(a120) (a111)edge(a121) (a112)edge(a122);
        \draw[edge,col2] (a120)edge(a130) (a121)edge(a131) (a221)edge(a231);
    \end{tikzpicture}
    \qquad
    \begin{tikzpicture}[scale=1.5,face/.style={col5,opacity=0.5},hedge/.style={edge,dotted},pedge/.style={edge,dashed}]
        \coordinate (x) at (0.4,0.4);
        \coordinate (y) at (1,0);
        \coordinate (z) at (0,1);
        \foreach \i in {1,2,3}
        {
            \foreach \j in  {1,2,3}
            {
                \foreach \k in {0,1,2}
                {
                    \coordinate (c\i\j\k) at ($\i*(x)+\j*(y)+\k*(z)$);
                }
            }
        }

        \fill[face] (c320) rectangle (c331);
        \fill[face] (c220) -- (c230) -- (c330) -- (c320) -- (c220) -- cycle;
        \fill[face] (c211) rectangle (c222);
        \fill[face] (c110) -- (c120) -- (c220) -- (c210) -- (c110) -- cycle;
        \fill[face] (c120) -- (c130) -- (c230) -- (c220) -- (c120) -- cycle;
        \fill[face] (c110) -- (c210) -- (c211) -- (c111) -- (c110) -- cycle;
        \fill[face] (c111) -- (c211) -- (c212) -- (c112) -- (c111) -- cycle;
        \node[gvertex] (a210) at (c210) {};
        \node[gvertex] (a220) at (c220) {};
        \node[gvertex] (a320) at (c320) {};
        \node[gvertex] (a211) at (c211) {};

        \draw[pedge] (a320)edge(a321);
        \draw[hedge] (a220)edge(a320) (a320)edge(a330);

        \draw[pedge] (a211)edge(a221);
        \draw[edge] (a221)edge(a222);
        \draw[hedge] (a110)edge(a210) (a210)edge(a211) (a210)edge(a220);
        \draw[hedge] (a120)edge(a220) (a220)edge(a230);
        \draw[hedge] (a111)edge(a211) (a211)edge(a212);

        \fill[face] (c110) rectangle (c121);
        \fill[face] (c120) rectangle (c131);
        \fill[face] (c111) rectangle (c122);
        \draw[edge] (a110)edge(a120) (a120)edge(a121) (a121)edge(a111) (a111)edge(a110);
        \node[gvertex] (a111) at (c111) {};
        \node[gvertex] (a121) at (c121) {};
        \node[gvertex] (a110) at (c110) {};
        \draw[edge] (a120)edge(a130) (a121)edge(a131);
        \node[gvertex] (a120) at (c120) {};
        \fill[face] (c130) -- (c230) -- (c231) -- (c131) -- (c130) -- cycle;
        \node[gvertex] (a131) at (c131) {};
        \draw[edge] (a111)edge(a112) (a121)edge(a122);

        \fill[face] (c230) -- (c330) -- (c331) -- (c231) -- (c230) -- cycle;
        \draw[edge] (a130)edge(a230) (a230)edge(a231) (a231)edge(a131) (a131)edge(a130);
        \node[gvertex] (a130) at (c130) {};
        \node[gvertex] (a230) at (c230) {};
        \fill[face] (c221) -- (c231) -- (c331) -- (c321) -- (c221) -- cycle;
        \draw[edge] (a230)edge(a330) (a330)edge(a331) (a331)edge(a231);
        \node[gvertex] (a330) at (c330) {};

        \draw[edge] (a221)edge(a231) (a331)edge(a321) (a321)edge(a221);
        \node[gvertex] (a221) at (c221) {};
        \node[gvertex] (a231) at (c231) {};
        \node[gvertex] (a321) at (c321) {};
        \node[gvertex] (a331) at (c331) {};
        \fill[face] (c112) -- (c122) -- (c222) -- (c212) -- (c112) -- cycle;
        \draw[edge] (a112)edge(a122) (a122)edge(a222) (a222)edge(a212) (a212)edge(a112);
        \node[gvertex] (a112) at (c112) {};
        \node[gvertex] (a122) at (c122) {};
        \node[gvertex] (a212) at (c212) {};
        \node[gvertex] (a222) at (c222) {};
    \end{tikzpicture}
    \caption{An example of a \Pframework\ which is not a \TPframework.}
    \label{fig:PnotTP}
\end{figure}

The same statement as \Cref{lem:zeroSumPframework} for \Pframeworks{} also holds for \TPframeworks.
\begin{lemma}
	\label{lem:zeroSumAPclass}
	A \TPframework{} $(G,\rho)$ is walk-independent.
\end{lemma}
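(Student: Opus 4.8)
The plan is to verify \Cref{eq:zeroSumAPclass} for every \APclass{} $r$ and every closed walk $C$ in $G$; the requirement that $\rho$ be a parallelogram placement is built into the definition of a \TPframework{}. Write $f_r(C) = \sum_{\oriented{u}{v}\in r\cap C}(\rho(v)-\rho(u))$. Since $f_r$ is additive under concatenation of walks and changes sign under reversal, and since $\asc{G}$ is simply connected, I would proceed in three steps: \textbf{(1)} show directly that $f_r$ vanishes on all $3$-cycles and all $4$-cycle subgraphs of $G$; \textbf{(2)} upgrade $f_r$ to a functional that is invariant under the homotopy moves on walks in $\asc{G}$; \textbf{(3)} contract an arbitrary closed walk to the trivial one, on which the functional is $(0,0)$.

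For step (1): in a $3$-cycle all three edges lie in one \APclass{} (relation $\Trel$), so $f_r$ of it is either a telescoping sum of three consecutive edge vectors, hence $(0,0)$, or $(0,0)$ because none of the edges lie in $r$. For a $4$-cycle subgraph $(a,b,c,d,a)$ there are two cases. If one of its diagonals is already an edge of $G$, then two $3$-cycles appear and $ab,bc,cd,da$ all lie in a single \APclass{}, so $f_r$ telescopes to $(0,0)$. Otherwise the $4$-cycle is induced, hence a non-degenerate parallelogram in $\rho$, so $\rho(b)-\rho(a) = \rho(c)-\rho(d)$ and $\rho(c)-\rho(b) = \rho(d)-\rho(a)$, while by relation $\Prel$ the opposite pairs $\{ab,cd\}$ and $\{bc,da\}$ each lie in one \APclass{}; collecting the (at most two kinds of) surviving edge vectors cancels them pairwise, giving $f_r(a,b,c,d,a)=(0,0)$.

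The main obstacle is step (2): a homotopy move in $\asc{G}$ may traverse a $1$-face $\{a,c\}$ that is a \emph{diagonal} of a $4$-cycle but not an edge of $G$, so $f_r$ has nothing to say about such a step. I would extend $f_r$ to a functional $\Phi_r$ on closed walks in the $1$-skeleton of $\asc{G}$: along an edge of $G$ a step contributes exactly as in $f_r$, and along a diagonal step $a\to c$ of a $4$-cycle $(a,b,c,d,a)$ it contributes $f_r$ of the two-edge path $a\to b\to c$. The key is that this is well defined: two admissible choices (a different witnessing $4$-cycle, or the other $a$--$c$ side of the same one) differ by $f_r$ of a $4$-cycle subgraph of $G$, which is $(0,0)$ by step (1). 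Then $\Phi_r$ is additive, anti-symmetric under reversal, agrees with $f_r$ on walks that stay in $G$, and — crucially — vanishes on the boundary of every $2$-face of $\asc{G}$: for a $3$-cycle face this is step (1) verbatim, and for a $2$-face $\{a,b,c\}$ coming from an induced $4$-cycle $(a,b,c,d,a)$ one evaluates the diagonal step $c\to a$ along $c\to d\to a$, so that $\Phi_r(a\to b\to c\to a)$ becomes $f_r(a,b,c,d,a)=(0,0)$.

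Finally, a single move replaces a sub-path $x\to y\to z$ by $x\to z$ where $\{x,y,z\}$ is a $2$-face (the degenerate cases where two of $x,y,z$ coincide being immediate from $\Phi_r(x\to x)=(0,0)$), and it changes $\Phi_r$ of a walk by $\Phi_r(x\to y\to z\to x)$, the boundary of that face, hence by $(0,0)$. So $\Phi_r$ is a homotopy invariant of closed walks based at a fixed vertex, and for a closed walk $C$ in $G$ based at that vertex we get $f_r(C)=\Phi_r(C)=\Phi_r(\text{trivial walk})=(0,0)$; closed walks based elsewhere follow by conjugating with a connecting walk. Thus \Cref{eq:zeroSumAPclass} holds for every \APclass{} and every closed walk, so $(G,\rho)$ is walk-independent. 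The parallelogram-placement hypothesis is used essentially, and only, in the $4$-cycle case of step (1), which in turn is what makes the passage through non-edge diagonals in step (2) legitimate.
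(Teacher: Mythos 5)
Your proposal is correct and follows essentially the same route as the paper: extend the per-class displacement sum to a functional on walks in $\asc{G}$ (your $\Phi_r$ is exactly the paper's $\tau_r$, since evaluating a diagonal step via a two-edge detour reproduces its four-case definition), show it is invariant under the homotopy moves, and conclude by simple connectedness. The only difference is organizational: the paper verifies the additivity identity $\tau_r(u_1,u_3)=\tau_r(u_1,u_2)+\tau_r(u_2,u_3)$ by explicit case analysis on which edges lie in $r$, whereas you reduce both well-definedness and move-invariance to the vanishing of $f_r$ on $3$- and $4$-cycle subgraphs, which is a clean and valid shortcut.
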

\begin{proof}
	The idea of the proof is following: for an \APclass{} $r$,
	we aim to construct a map $T_r$ from the closed walks of $\asc{G}$ to $\RR^2$
	such that it evaluates to the left hand side of \Cref{eq:zeroSumAPclass} for a closed walk in~$G$.
	When we show that $T_r$ is invariant under the moves and equal to $(0,0)$
	for a trivial closed walk, namely, $(v,v)$ for $v\in V_G$,
	the statement follows since all closed walks are homotopic to a trivial one.
	In order to do so, we first define a map $\tau_r:V_G\times V_G \rightarrow \RR^2$ by
	\begin{equation*}
		\tau_r(u,v)=\begin{cases}
						\rho(v)-\rho(u) & \text{if } uv\in E_G\cap r\,,\\
						\rho(v)-\rho(u) & \text{if } uv\in \asc{G} \land uv\notin E_G \land (\exists z\in V_G: uz, zv\in r)\,,\\
						\rho(v)-\rho(z) & \text{if } uv\in \asc{G} \land uv\notin E_G \land (\exists z\in V_G: uz \in E_G \setminus r \land vz\in r)\,,\\
						(0,0) & \text{otherwise.}
					\end{cases}
	\end{equation*}
	We shall check that the map is well-defined:
	suppose that $uv\in \asc{G}$ and $uv\notin E_G$.
	Since $uv\in \asc{G}$, there is a 4-cycle $(u,z,v,z',u)$ in $G$. This 4-cycle is unique up to swapping $z$ and $z'$,
	in other words, there are only two common neighbors of $u$ and $v$.
	Indeed, if there was $z''\in V_G$ distinct from $z,z'$ and adjacent to both $u$ and $v$ in $G$,
	then $\rho(z')=\rho(z'')$ as both $(u,z,v,z',u)$ and $(u,z,v,z'',u)$ are parallelograms in $\rho$,
	which is a contradiction.
	Since the opposite edges in a 4-cycle are always in the same \APclass,
	either the second, third, or the fourth case in the definition of $\tau_r$ occurs, see \Cref{fig:taur}.
	Moreover, vertex $z$ in the third case is unique.

	\begin{figure}[ht]
		\centering
		\begin{tikzpicture}
			\node[fvertex,label={below left:$u$}] (u) at (0,0) {};
			\node[fvertex,label={below right:$z$}] (z) at (1,0) {};
			\node[fvertex,label={above right:$v$}] (v) at (1.5,1.5) {};
			\node[fvertex,label={above left:$z'\!$}] (zp) at (0.5,1.5) {};
			\draw[edge] (u)--(z);
			\draw[edge] (z)--(v);
			\draw[edge] (v)--(zp);
			\draw[edge] (zp)--(u);
			\draw[taur,dotted] (u)--(v);

			\begin{scope}[xshift=3cm]
				\node[fvertex,label={below left:$u$}] (u) at (0,0) {};
				\node[fvertex,label={below right:$z$}] (z) at (1,0) {};
				\node[fvertex,label={above right:$v$}] (v) at (1.5,1.5) {};
				\node[fvertex,label={above left:$z'\!$}] (zp) at (0.5,1.5) {};
				\draw[nonribbon] (u)--(z);
				\draw[nonribbon] (v)--(zp);
				\draw[edge] (zp)--(u);
				\draw[nonedge] (u)--(v);
				\draw[taur] (z) -- (v);
			\end{scope}

			\begin{scope}[xshift=6cm]
				\node[fvertex,label={below left:$u$}] (u) at (0,0) {};
				\node[fvertex,label={below right:$z$}] (z) at (1,0) {};
				\node[fvertex,label={above right:$v$}] (v) at (1.5,1.5) {};
				\node[fvertex,label={above left:$z'\!$}] (zp) at (0.5,1.5) {};
				\draw[edge] (u)--(z);
				\draw[nonribbon] (z)--(v);
				\draw[nonribbon] (zp)--(u);
				\draw[nonedge] (u)--(v);
				\draw[taur] (zp) -- (v);
			\end{scope}

			\begin{scope}[xshift=9cm]
				\node[fvertex,label={below left:$u$}] (u) at (0,0) {};
				\node[fvertex,label={below right:$z$}] (z) at (1,0) {};
				\node[fvertex,label={above right:$v$}] (v) at (1.5,1.5) {};
				\node[fvertex,label={above left:$z'\!$}] (zp) at (0.5,1.5) {};
				\draw[nonribbon] (u)--(z);
				\draw[nonribbon] (z)--(v);
				\draw[nonribbon] (v)--(zp);
				\draw[nonribbon] (zp)--(u);
				\draw[nonedge] (u)--(v);
				\node[taurZero] at (u) {};
			\end{scope}
		\end{tikzpicture}
 		\caption{The possible values (yellow) of $\tau_r(u,v)$ if $uv$ (dotted) belongs to $\asc{G}$ but not to $E_G$
 		depending whether the edges of the corresponding 4-cycle are in $r$ (solid) or not (dashed).}
		\label{fig:taur}
	\end{figure}
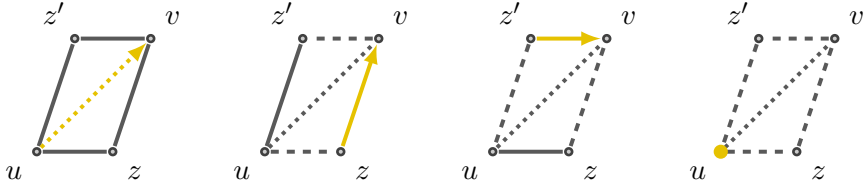

	Now we set
	\begin{equation*}
		T_r(C) = \sum_{\oriented{u}{v} \in C} \tau_r(u,v)
	\end{equation*}
	for a closed walk $C$ in $\asc{G}$.
	Since closed walks are always finite even if $G$ is infinite, the sum is well-defined.
	If $C$ is a closed walk in~$G$, then $T_r(C)$ equals the left hand side of \Cref{eq:zeroSumAPclass}.
	If $u\in V_G$, then $T_r((u,u))=\tau_r(u,u)=(0,0)$.
	Hence, the only fact left to be shown is 
	that $T_r(C_1)=T_r(C_2)$, where the closed walk $C_1$ is obtained from the closed walk~$C_2$ by a move.
	In particular, supposing that $u_1,u_2,u_3$ are three consecutive vertices in $C_2$
	and $u_2$ is omitted to get $C_1$ using that $\{u_1, u_2, u_3\}$ is a face of $\asc{G}$,
	we have to show that
	\begin{equation}
		\label{eq:TrInv}
		\tau_r(u_1,u_3)= \tau_r(u_1,u_2) + \tau_r(u_2, u_3)\,.
	\end{equation}

	If $u_1=u_2$ or $u_2=u_3$, then \Cref{eq:TrInv} holds since $\tau_r(u_2, u_2)=0$.
	We settle the case $u_1=u_3$ by showing that $\tau_r(u,v)=-\tau_r(v,u)$ for any $u,v\in V_G$:
	the only non-trivial case is when $uv\in \asc{G}\setminus E_G$,
	i.e., there is an induced 4-cycle $(u,z,v,z',u)$ in $G$,
	and $uz\notin r$ while $vz\in r$.
	This 4-cycle is a parallelogram in $\rho$, therefore, $\rho(v)-\rho(z)=-(\rho(u)-\rho(z'))=-\tau(v,u)$
	as  $vz'\notin r$ and $z'u\in r$.

	Now we focus on the case when $u_1, u_2, u_3$ are distinct.
	Since the 2-dimensional faces of $\asc{G}$ have at most one non-edge of $G$ as a subface,
	we have that at most one of the three edges among these vertices is not in $E_G$.
	If all three edges are in $E_G$, then either all of them or none is in $r$.
	In both cases, \Cref{eq:TrInv} holds.

	Assume $u_1u_3\notin E_G$.
	Let $(u_1,u_2,u_3,z,u_1)$ be the unique induced 4-cycle in $G$ having $u_1u_3$ as a diagonal.
	The following four subcases are summarized in \Cref{fig:taurNonedge13Proof}.
	\begin{figure}[ht]
		\centering
		\begin{tikzpicture}[scale=0.85]
			\node[fvertex,label={below left:$u_1$}] (u1) at (0,0) {};
			\node[fvertex,label={below right:$u_2$}] (u2) at (1,0) {};
			\node[fvertex,label={above right:$u_3$}] (u3) at (1.5,1.5) {};
			\node[fvertex,label={above left:$z$}] (z) at (0.5,1.5) {};
			\draw[taur,tauR1] (u1)--(u2);
			\draw[taur,tauR2] (u2)--(u3);
			\draw[edge] (u3)--(z);
			\draw[edge] (z)--(u1);
			\draw[taur,dotted,tauL] (u1)--(u3);

			\begin{scope}[xshift=3cm]
				\node[fvertex,label={below left:$u_1$}] (u1) at (0,0) {};
				\node[fvertex,label={below right:$u_2$}] (u2) at (1,0) {};
				\node[fvertex,label={above right:$u_3$}] (u3) at (1.5,1.5) {};
				\node[fvertex,label={above left:$z$}] (z) at (0.5,1.5) {};
				\draw[nonribbon] (u1)--(u2);
				\draw[taur,tauR2,decorate,decoration={simple line, raise=2pt}] (u2)--(u3);
				\draw[nonribbon] (u3)--(z);
				\draw[edge] (z)--(u1);
				\draw[nonedge] (u1)--(u3);
				\draw[taur,tauL,decorate,decoration={simple line, raise=-2pt}] (u2) -- (u3);
				\node[taurZero,tauR1] at (u1) {};
			\end{scope}

			\begin{scope}[xshift=6cm]
				\node[fvertex,label={below left:$u_1$}] (u1) at (0,0) {};
				\node[fvertex,label={below right:$u_2$}] (u2) at (1,0) {};
				\node[fvertex,label={above right:$u_3$}] (u3) at (1.5,1.5) {};
				\node[fvertex,label={above left:$z$}] (z) at (0.5,1.5) {};
				\draw[taur,tauR1] (u1)--(u2);
				\draw[nonribbon] (u2)--(u3);
				\draw[nonribbon] (z)--(u1);
				\draw[nonedge] (u1)--(u3);
				\draw[taur,tauL] (z) -- (u3);
				\node[taurZero,tauR2] at (u2) {};
			\end{scope}

			\begin{scope}[xshift=9cm]
				\node[fvertex,label={below left:$u_1$}] (u1) at (0,0) {};
				\node[fvertex,label={below right:$u_2$}] (u2) at (1,0) {};
				\node[fvertex,label={above right:$u_3$}] (u3) at (1.5,1.5) {};
				\node[fvertex,label={above left:$z$}] (z) at (0.5,1.5) {};
				\draw[nonribbon] (u1)--(u2);
				\draw[nonribbon] (u2)--(u3);
				\draw[nonribbon] (u3)--(z);
				\draw[nonribbon] (z)--(u1);
				\draw[nonedge] (u1)--(u3);
				\node[taurZero,tauL, minimum size=9pt] at (u1) {};
				\node[taurZero,tauR1] at (u1) {};
				\node[taurZero,tauR2] at (u2) {};
			\end{scope}
		\end{tikzpicture}
 		\caption{The illustration of \Cref{eq:TrInv}, i.e.,
 		${\color{tauL}\underline{{\color{black}\tau_r(u_1,u_3)}}}= {\color{tauR1}\underline{{\color{black}\tau_r(u_1,u_2)}}} + {\color{tauR2}\underline{{\color{black}\tau_r(u_2, u_3)}}}$,
 		if $u_1u_3$ (dotted) belongs to $\asc{G}$ but not to $E_G$
 		depending whether the edges of the corresponding 4-cycle are in $r$ (solid) or not (dashed).}
		\label{fig:taurNonedge13Proof}
	\end{figure}

	If $u_1u_2, u_2 u_3 \in r$,	then
	 $\tau_r(u_1,u_3)= \rho(u_3)-\rho(u_1) = \rho(u_3)-\rho(u_2) + \rho(u_2) -\rho(u_1)	= \tau_r(u_2, u_3) + \tau_r(u_1,u_2)$.
	Hence, \Cref{eq:TrInv} holds.
	If $u_1u_2\notin r$ and ${u_2 u_3\in r}$,
	then $\tau_r(u_1,u_3)=\rho(u_3)-\rho(u_2) = \tau_r(u_2,u_3)$.
	Since $\tau_r(u_1,u_2)=(0,0)$, we have  \Cref{eq:TrInv}.
	If $u_1u_2\in r$ and ${u_2 u_3\notin r}$,
	then $\tau_r(u_1,u_3)=\rho(u_3)-\rho(z) = \rho(u_2) -\rho(u_1) = \tau_r(u_2,u_1)$.
	\Cref{eq:TrInv} holds since $\tau_r(u_2,u_3)=(0,0)$.
	If $u_1u_2, u_2 u_3 \notin r$, then $u_1z,zu_3\notin r$.
	Thus, $\tau_r(u_1,u_3)=(0,0)= \tau_r(u_1,u_2)= \tau_r(u_2, u_3)$,
	which gives again that \Cref{eq:TrInv} holds.

	The proofs when $u_1u_2\notin E_G$, resp.\ $u_2u_3\notin E_G$ are similar,
	see \Cref{fig:taurNonedge12Proof,fig:taurNonedge23Proof}.\qedhere
	\begin{figure}[ht]
		\centering
		\begin{tikzpicture}[scale=0.85]
			\node[fvertex,label={below left:$u_1$}] (u1) at (0,0) {};
			\node[fvertex,label={below right:$u_3$}] (u3) at (1,0) {};
			\node[fvertex,label={above right:$u_2$}] (u2) at (1.5,1.5) {};
			\node[fvertex,label={above left:$z$}] (z) at (0.5,1.5) {};
			\draw[taur,tauL] (u1)--(u3);
			\draw[taur,tauR2] (u2)--(u3);
			\draw[edge] (u2)--(z);
			\draw[edge] (z)--(u1);
			\draw[taur,dotted,tauR1] (u1)--(u2);

			\begin{scope}[xshift=3cm]
				\node[fvertex,label={below left:$u_1$}] (u1) at (0,0) {};
				\node[fvertex,label={below right:$u_3$}] (u3) at (1,0) {};
				\node[fvertex,label={above right:$u_2$}] (u2) at (1.5,1.5) {};
				\node[fvertex,label={above left:$z$}] (z) at (0.5,1.5) {};
				\draw[nonribbon] (u1)--(u3);
				\draw[taur,tauR2,decorate,decoration={simple line, raise=2pt}] (u2)--(u3);
				\draw[nonribbon] (u2)--(z);
				\draw[edge] (z)--(u1);
				\draw[nonedge] (u1)--(u2);
				\draw[taur,tauR1,decorate,decoration={simple line, raise=2pt}] (u3) -- (u2);
				\node[taurZero,tauL] at (u1) {};
			\end{scope}

			\begin{scope}[xshift=6cm]
				\node[fvertex,label={below left:$u_1$}] (u1) at (0,0) {};
				\node[fvertex,label={below right:$u_3$}] (u3) at (1,0) {};
				\node[fvertex,label={above right:$u_2$}] (u2) at (1.5,1.5) {};
				\node[fvertex,label={above left:$z$}] (z) at (0.5,1.5) {};
				\draw[taur,tauL] (u1)--(u3);
				\draw[nonribbon] (u3)--(u2);
				\draw[nonribbon] (z)--(u1);
				\draw[nonedge] (u1)--(u2);
				\draw[taur,tauR1] (z) -- (u2);
				\node[taurZero,tauR2] at (u2) {};
			\end{scope}

			\begin{scope}[xshift=9cm]
				\node[fvertex,label={below left:$u_1$}] (u1) at (0,0) {};
				\node[fvertex,label={below right:$u_3$}] (u3) at (1,0) {};
				\node[fvertex,label={above right:$u_2$}] (u2) at (1.5,1.5) {};
				\node[fvertex,label={above left:$z$}] (z) at (0.5,1.5) {};
				\draw[nonribbon] (u1)--(u3);
				\draw[nonribbon] (u3)--(u2);
				\draw[nonribbon] (u2)--(z);
				\draw[nonribbon] (z)--(u1);
				\draw[nonedge] (u1)--(u2);
				\node[taurZero,tauL, minimum size=9pt] at (u1) {};
				\node[taurZero,tauR1] at (u1) {};
				\node[taurZero,tauR2] at (u2) {};
			\end{scope}
		\end{tikzpicture}
 		\caption{The illustration of \Cref{eq:TrInv}, i.e.,
 		${\color{tauL}\underline{{\color{black}\tau_r(u_1,u_3)}}}= {\color{tauR1}\underline{{\color{black}\tau_r(u_1,u_2)}}} + {\color{tauR2}\underline{{\color{black}\tau_r(u_2, u_3)}}}$,
 		if $u_1u_2$ (dotted) belongs to $\asc{G}$ but not to $E_G$
 		depending whether the edges of the corresponding 4-cycle are in $r$ (solid) or not (dashed).}
		\label{fig:taurNonedge12Proof}
	\end{figure}

	\begin{figure}[ht]
		\centering
		\begin{tikzpicture}[scale=0.85]
			\node[fvertex,label={below left:$u_2$}] (u2) at (0,0) {};
			\node[fvertex,label={below right:$u_1$}] (u1) at (1,0) {};
			\node[fvertex,label={above right:$u_3$}] (u3) at (1.5,1.5) {};
			\node[fvertex,label={above left:$z$}] (z) at (0.5,1.5) {};
			\draw[taur,tauR1] (u1)--(u2);
			\draw[taur,tauL] (u1)--(u3);
			\draw[edge] (u3)--(z);
			\draw[edge] (z)--(u2);
			\draw[taur,dotted,tauR2] (u2)--(u3);

			\begin{scope}[xshift=3cm]
				\node[fvertex,label={below left:$u_2$}] (u2) at (0,0) {};
				\node[fvertex,label={below right:$u_1$}] (u1) at (1,0) {};
				\node[fvertex,label={above right:$u_3$}] (u3) at (1.5,1.5) {};
				\node[fvertex,label={above left:$z$}] (z) at (0.5,1.5) {};
				\draw[nonribbon] (u2)--(u1);
				\draw[taur,tauR2,decorate,decoration={simple line, raise=-2pt}] (u1)--(u3);
				\draw[nonribbon] (u3)--(z);
				\draw[edge] (z)--(u2);
				\draw[nonedge] (u2)--(u3);
				\draw[taur,tauL,decorate,decoration={simple line, raise=2pt}] (u1) -- (u3);
				\node[taurZero,tauR1] at (u1) {};
			\end{scope}

			\begin{scope}[xshift=6cm]
				\node[fvertex,label={below left:$u_2$}] (u2) at (0,0) {};
				\node[fvertex,label={below right:$u_1$}] (u1) at (1,0) {};
				\node[fvertex,label={above right:$u_3$}] (u3) at (1.5,1.5) {};
				\node[fvertex,label={above left:$z$}] (z) at (0.5,1.5) {};
				\draw[taur,tauR1] (u1)--(u2);
				\draw[nonribbon] (u1)--(u3);
				\draw[nonribbon] (z)--(u2);
				\draw[nonedge] (u2)--(u3);
				\draw[taur,tauR2] (z) -- (u3);
				\node[taurZero,tauL] at (u1) {};
			\end{scope}

			\begin{scope}[xshift=9cm]
				\node[fvertex,label={below left:$u_2$}] (u2) at (0,0) {};
				\node[fvertex,label={below right:$u_1$}] (u1) at (1,0) {};
				\node[fvertex,label={above right:$u_3$}] (u3) at (1.5,1.5) {};
				\node[fvertex,label={above left:$z$}] (z) at (0.5,1.5) {};
				\draw[nonribbon] (u2)--(u1);
				\draw[nonribbon] (u1)--(u3);
				\draw[nonribbon] (u3)--(z);
				\draw[nonribbon] (z)--(u2);
				\draw[nonedge] (u2)--(u3);
				\node[taurZero,tauL, minimum size=9pt] at (u1) {};
				\node[taurZero,tauR1] at (u1) {};
				\node[taurZero,tauR2] at (u2) {};
			\end{scope}
		\end{tikzpicture}
 		\caption{The illustration of \Cref{eq:TrInv}, i.e.,
 		${\color{tauL}\underline{{\color{black}\tau_r(u_1,u_3)}}}= {\color{tauR1}\underline{{\color{black}\tau_r(u_1,u_2)}}} + {\color{tauR2}\underline{{\color{black}\tau_r(u_2, u_3)}}}$,
 		if $u_2u_3$ (dotted) belongs to $\asc{G}$ but not to $E_G$
 		depending whether the edges of the corresponding 4-cycle are in $r$ (solid) or not (dashed).}
		\label{fig:taurNonedge23Proof}
	\end{figure}
\end{proof}

Finally, the previous statements give equivalences of (infinitesimal) flexibility in terms of \APclasses{} and Cartesian NAC-colorings.
\begin{theorem}
    \label{thm:TPframeworks}
    Let $(G,\rho)$ be a \TPframework{}.
    The following statements are equivalent:
    \begin{enumerate}
        \item $(G,\rho)$ is flexible,
        \item $(G,\rho)$ is infinitesimally flexible,
        \item $G$ has at least two \APclasses{},
        \item $G$ has a Cartesian NAC-coloring.
    \end{enumerate}    
    If $G$ has $\ell$ \APclasses{}, where $2\leq \ell\in \NN$, then
    $G$ is a non-trivial subgraph of the Cartesian product of $\ell$ graphs. 
\end{theorem}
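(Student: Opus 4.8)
The plan is to observe that everything follows immediately from the machinery already built, once we invoke \Cref{lem:zeroSumAPclass}. That lemma establishes that any \TPframework{} $(G,\rho)$ is walk-independent, so all the hypotheses of \Cref{thm:main} and \Cref{cor:FlexIffCartNAC} are in place. The only thing to double-check is the extra non-degeneracy assumption in \Cref{thm:main} and in the NAC-coloring statements, but this is free: by the definition of a \TPframework{}, the placement $\rho$ is a parallelogram placement in which every 3-cycle forms a non-degenerate triangle. Hence all induced 4-cycles are non-degenerate parallelograms and all 3-cycles are non-degenerate triangles.

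Concretely, I would proceed as follows. First, by \Cref{lem:zeroSumAPclass}, $(G,\rho)$ is walk-independent. By \Cref{thm:main}, flexibility of $(G,\rho)$ is equivalent to $G$ having at least two \APclasses{}; and since all 3-cycles are non-degenerate triangles, \Cref{thm:main} further gives that these two conditions are equivalent to infinitesimal flexibility. This yields the equivalence of the first three items. Then \Cref{cor:FlexIffCartNAC}, applicable because $(G,\rho)$ is walk-independent, adds the equivalence with $G$ admitting a Cartesian NAC-coloring (and also with $G$ being a non-trivial subgraph of a Cartesian product of graphs), completing the chain of four equivalences. Finally, the quantitative statement---that $\ell$ \APclasses{} with $2\leq \ell\in\NN$ force $G$ to be a non-trivial subgraph of a Cartesian product of exactly $\ell$ graphs---is the last assertion of \Cref{thm:main}, which again applies verbatim under walk-independence.

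There is essentially no obstacle in this final step: the real content was already extracted in proving that \TPframeworks{} are walk-independent (\Cref{lem:zeroSumAPclass}), which is where the simply-connectedness of $\asc{G}$ and the homotopy-invariance argument for $T_r$ are used. So the proof here is a short bookkeeping argument citing \Cref{lem:zeroSumAPclass}, \Cref{thm:main}, and \Cref{cor:FlexIffCartNAC}, together with the remark that the defining properties of a \TPframework{} supply exactly the non-degeneracy needed by those results.

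\begin{proof}
    By \Cref{lem:zeroSumAPclass}, the framework $(G,\rho)$ is walk-independent.
    Moreover, by the definition of a \TPframework{}, $\rho$ is a parallelogram placement
    in which every 3-cycle forms a non-degenerate triangle;
    in particular all induced 4-cycles are non-degenerate parallelograms and all 3-cycles are non-degenerate triangles.
    Hence \Cref{thm:main} applies in full: $(G,\rho)$ is flexible if and only if $G$ has at least two \APclasses{},
    and, since the 3-cycles are non-degenerate triangles, both are equivalent to $(G,\rho)$ being infinitesimally flexible.
    By \Cref{cor:FlexIffCartNAC}, which applies because $(G,\rho)$ is walk-independent,
    these conditions are further equivalent to $G$ admitting a Cartesian NAC-coloring.
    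This proves the equivalence of the four statements.
    Finally, if $G$ has $\ell$ \APclasses{} with $2\leq \ell\in\NN$,
    then $G$ is a non-trivial subgraph of the Cartesian product of $\ell$ graphs by the last part of \Cref{thm:main}.
\end{proof}
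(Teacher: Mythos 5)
Your proof is correct and follows exactly the paper's route: the paper's own proof is the one-line statement that the theorem follows from \Cref{thm:main} and \Cref{lem:zeroSumAPclass} (with \Cref{cor:FlexIffCartNAC} supplying the NAC-coloring equivalence, as you correctly make explicit). Your additional check that the definition of a \TPframework{} provides the non-degeneracy hypotheses needed for the infinitesimal-flexibility part of \Cref{thm:main} is a worthwhile clarification but not a different argument.
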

\begin{proof}
    The statement follows from \Cref{thm:main} and \Cref{lem:zeroSumAPclass}.
\end{proof}

\section{Rotationally symmetric \TPframeworks{}}\label{sec:sym}
This section is devoted to results on symmetric graphs, frameworks and flexes.
Symmetric flexibility of graphs has been considered in \cite{DGLrotSymmetry}.
We again relate the NAC-colorings (in this case symmetric ones) to symmetric \APclasses{}.

We start with the definition of the symmetric version from the notions of the previous sections.
\begin{definition}
    Let $G$ be a connected graph and $n\in \NN, n\geq 2$.
    Let the group $\Cn$ be a cyclic subgroup of order $n$ (generated by $\omega$) of the automorphism group of $G$.
    For $\gamma\in\Cn$, we define $\gamma v := \gamma(v)$ for $v\in V_G$ and
    $\gamma e := \gamma u \gamma v$ for $e=uv\in E_G$.
    We call a subset $S \subset V_G$ \emph{invariant} if $\gamma S=S$ for all $\gamma \in \Cn$,
    resp.\ \emph{partially invariant} if $\gamma S =S$ for some $\gamma \in \Cn, \gamma\neq 1$.
    A vertex $v\in V_G$ is called \emph{(partially) invariant}
    if $\{v\}$ is (partially) invariant.
    The graph $G$ is called \emph{\CnSymmetric}
    if the set of invariant vertices is an independent set in $G$
    and we have that if $v\in V_G$ is partially invariant,
    then $v$ is invariant.
\end{definition}

\begin{definition}
    A framework $(G,\rho)$ with $G$ being \CnSymmetric{}
    is called \emph{\CnSymmetric}
    if $\rho(\omega v) = \RotCn \rho(v)$ for each $v \in V_G$,
    where $\RotCn:=\Rot{\frac{2\pi}{n}}$ is the $\frac{2\pi}{n}$ rotation matrix.
    If there is a non-trivial flex $\rho_t$ of $(G,\rho)$
    such that each $(G,\rho_t)$ is \CnSymmetric,
    then $(G,\rho)$ is \emph{\CnSymmetric{} flexible},
    and \emph{\CnSymmetric{} rigid} otherwise.
\end{definition}

\begin{definition}
    An infinitesimal flex $\varphi$ of a \CnSymmetric{} framework $(G,\rho)$
    is \emph{\CnSymmetric{}} if for all $v\in V_G$:
    \[
        \varphi(\omega v) = \RotCn \varphi(v)\,.
    \]
    The framework $(G,\rho)$ is called \CnSymmetric{} infinitesimally flexible
    if it has a non-trivial \CnSymmetric{} infinitesimal flex.
\end{definition}

\begin{definition}
    Let $G$ be a \CnSymmetric{} graph.
    Consider the following relation on the edge set $E_G$:
    $e_1,e_2$ are in relation if and only if
    $e_1 (\Trel\cup \Prel) e_2$ (see \Cref{def:APclass})
    or $e_1=\omega^k e_2$ for some $k\in\NN_0$.
    An equivalence class of the transitive closure of the relation
    is called a \emph{\CnAPclass}.
\end{definition}

Again we can say something on the number of \CnAPclasses{} of a flexible framework.
\begin{proposition}
	\label{prop:CnFlexImpliesTwoCnAPclasses}
	If a $(G,\rho)$ is a \CnSymmetric{} flexible framework
	such that all induced 4-cycles are non-degenerate parallelograms,
	then $G$ has at least two \CnAPclasses{}.
\end{proposition}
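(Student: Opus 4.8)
The plan is to mimic the proof of \Cref{prop:flexImpliesTwoAPclasses}, adding a single observation that absorbs the symmetry. Let $\rho_t$, $t\in[0,\varepsilon)$, be a non-trivial \CnSymmetric{} flex of $(G,\rho)$ and define the relation $\constAngle$ on $E_G$ exactly as there: $uv\constAngle wz$ if there is $\varepsilon'\in(0,\varepsilon)$ such that the angle between the lines $\rho_t(u)\rho_t(v)$ and $\rho_t(w)\rho_t(z)$ is the same for all $t\in[0,\varepsilon')$. Just as in that proof, $\constAngle$ is an equivalence relation, opposite edges of an induced (hence non-degenerate) parallelogram are parallel in $\rho_t$ on some interval $[0,\varepsilon')$ and therefore $\constAngle$-related, and all edges of a $3$-cycle lie in one $\constAngle$-class; so every \APclass{} is contained in a single $\constAngle$-class.

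The new ingredient is that an edge $e=uv$ and its image $\omega^k e=\omega^k u\,\omega^k v$ are $\constAngle$-related for every $k$. Indeed, since each framework $(G,\rho_t)$ along the flex is \CnSymmetric{}, we have $\rho_t(\omega^k u)=\RotCn^{k}\rho_t(u)$ and $\rho_t(\omega^k v)=\RotCn^{k}\rho_t(v)$, so the segment $\rho_t(\omega^k u)\rho_t(\omega^k v)$ is the image of $\rho_t(u)\rho_t(v)$ under the fixed rotation $\RotCn^{k}$. Hence the angle between the lines $\rho_t(u)\rho_t(v)$ and $\rho_t(\omega^k u)\rho_t(\omega^k v)$ equals $\tfrac{2\pi k}{n}$ modulo $\pi$ for every $t\in[0,\varepsilon)$, which is independent of $t$. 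Consequently all three relations generating the \CnAPclass{} equivalence — $\Trel$, $\Prel$, and $e\mapsto\omega^k e$ — refine $\constAngle$; since $\constAngle$ is already transitive, its transitive closure does too, so every \CnAPclass{} is contained in a single equivalence class of $\constAngle$.

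It remains to argue that $\constAngle$ has at least two classes, which is done exactly as in \Cref{prop:flexImpliesTwoAPclasses}: if it had only one, then for every edge $uv$ we would have $\rho_t(v)-\rho_t(u)=\Rot{\theta(t)}\bigl(\rho(v)-\rho(u)\bigr)$ with the angle $\theta(t)$ not depending on the edge (the length is preserved because $(G,\rho_t)$ is equivalent to $(G,\rho)$), and by connectedness of $G$ this forces $\rho_t(w)=\Rot{\theta(t)}\rho(w)+c(t)$ for a translation $c(t)$, i.e.\ $\rho_t$ is congruent to $\rho$, contradicting non-triviality. Hence $\constAngle$, and therefore also the \CnAPclass{} partition, has at least two classes.

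I expect no serious obstacle: the argument is a routine adaptation. The only place where the symmetry hypothesis is genuinely used is the verification that $e\constAngle\omega^k e$, and the point to get right there is that it relies on \emph{every} $(G,\rho_t)$ along the flex being \CnSymmetric{} — precisely what the definition of a \CnSymmetric{} flex guarantees — and not merely on the symmetry of the initial realization $(G,\rho)$.
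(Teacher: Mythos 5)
Your proposal is correct and follows essentially the same route as the paper: the paper's proof is exactly ``proceed as in \Cref{prop:flexImpliesTwoAPclasses}, observing that the angle between the lines of $e$ and $\omega^k e$ is $\tfrac{2k\pi}{n}$ in every \CnSymmetric{} realization, hence constant along the flex,'' which is precisely your added ingredient. Your final paragraph merely spells out in more detail the step the paper leaves implicit (that a single $\constAngle$-class would force the flex to be trivial), so nothing is missing.
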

\begin{proof}
    We proceed analogously to the proof of \Cref{prop:flexImpliesTwoAPclasses}
    with the following observation:
    the angle between the lines
    given by $e\in E_G$ and $\omega^k e$ is $\frac{2k\pi}{n}$
    in any \CnSymmetric{} realization, namely, it is constant along any \CnSymmetric{} flex.
\end{proof}

Similarly we can show this for infinitesimal flexibility.
\begin{proposition}
	\label{prop:CnInfFlexImpliesTwoCnAPclasses}
	If a $(G,\rho)$ is a \CnSymmetric{} infinitesimally flexible framework such that all induced 4-cycles are
	non-degenerate parallelograms and all 3-cycles are non-degenerate triangles, then $G$ has at least two \CnAPclasses{}.
\end{proposition}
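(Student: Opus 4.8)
The plan is to imitate the proof of \Cref{prop:infFlexImpliesTwoAPclasses}, reusing the equivalence relation $\infConstAngle$ on $E_G$ introduced there, now formed with respect to a fixed non-trivial \CnSymmetric{} infinitesimal flex $\varphi$ of $(G,\rho)$. From that proof we may take for granted that $\infConstAngle$ is an equivalence relation and that every \APclass{} is contained in one of its classes, since those arguments only use that $\varphi$ is an infinitesimal flex and that 3- and 4-cycles are non-degenerate, which still holds here.

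The key new point is that $\Cn$-symmetry forces $e \infConstAngle \omega^k e$ for every edge $e = uv$ and every $k$. Put $u' = \omega^k u$ and $v' = \omega^k v$. From $\rho(\omega v) = \RotCn \rho(v)$ and $\varphi(\omega v) = \RotCn \varphi(v)$ we obtain $\realizVec{u'}{v'} = \RotCn^k \realizVec{u}{v}$ and $\flexVec{u'}{v'} = \RotCn^k \flexVec{u}{v}$. If $n \nmid 2k$, then $\RotCn^k$ is a rotation by an angle that is not a multiple of $\pi$, so $\realizVec{u}{v}$ and $\realizVec{u'}{v'}$ are linearly independent; writing $\flexVec{u}{v} = \alpha\,\piHalfRotation\,\realizVec{u}{v}$ for some $\alpha$ (using that $\varphi$ is an infinitesimal flex) and using that plane rotations commute, $\flexVec{u'}{v'} = \RotCn^k \alpha\,\piHalfRotation\,\realizVec{u}{v} = \alpha\,\piHalfRotation\,\RotCn^k \realizVec{u}{v} = \alpha\,\piHalfRotation\,\realizVec{u'}{v'}$, so the first disjunct in the definition of $\infConstAngle$ holds with this $\alpha$. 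If $n \mid 2k$, then $\RotCn^k = \pm I$, and the second disjunct holds with $\beta = \pm 1$ accordingly. Hence $e \infConstAngle \omega^k e$ for every $k$, and since a \CnAPclass{} is the transitive closure of $\Trel \cup \Prel$ together with all pairs $(e, \omega^k e)$, every \CnAPclass{} is contained in an equivalence class of $\infConstAngle$.

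It remains to check that $\infConstAngle$ has at least two classes; as the partition into \CnAPclasses{} refines that of $\infConstAngle$, this gives the claim. Suppose $E_G$ were a single $\infConstAngle$-class. For each edge $uv$ let $\alpha_{uv}$ be the unique scalar with $\flexVec{u}{v} = \alpha_{uv}\,\piHalfRotation\,\realizVec{u}{v}$; a short check, one case per disjunct, shows that $uv \infConstAngle u'v'$ forces $\alpha_{uv} = \alpha_{u'v'}$. Hence there is a single $\alpha \in \RR$ with $\varphi(u) - \varphi(v) = \alpha\,\piHalfRotation\,(\rho(u) - \rho(v))$ for all edges, and connectivity yields $\varphi(v) = \alpha\,\piHalfRotation\,\rho(v) + b$ for some fixed $b \in \RR^2$. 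As $\alpha\,\piHalfRotation$ is skew-symmetric, $\varphi$ would be trivial, contradicting its choice.

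I expect the displayed computations to be routine. The genuine subtlety is methodological: one cannot simply quote \Cref{prop:infFlexImpliesTwoAPclasses}, because it only produces two \APclasses{} and a \CnAPclass{} may merge them into one; the proof therefore needs both the symmetry identity placing $e$ and $\omega^k e$ in the relation $\infConstAngle$ and the direct implication ``a single $\infConstAngle$-class $\Rightarrow$ $\varphi$ trivial'' in place of the rigid-motion subtraction used in \Cref{prop:infFlexImpliesTwoAPclasses}.
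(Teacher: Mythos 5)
Your proof is correct and follows the paper's approach almost exactly: the paper likewise reduces to the proof of \Cref{prop:infFlexImpliesTwoAPclasses} and adds only the extra claim $e \infConstAngle \omega e$, established via $\realizVec{\omega u}{\omega v} = \RotCn \realizVec{u}{v}$, $\flexVec{\omega u}{\omega v} = \RotCn \flexVec{u}{v}$ and the same case split on linear (in)dependence (with $\beta=-1$, $n=2$ in the dependent case). The one place you deviate is the final counting step: the paper reuses the normalization argument from \Cref{prop:infFlexImpliesTwoAPclasses} (subtract a trivial flex, find consecutive edges $\bar u\bar v$, $\bar v\bar w$ in different classes), whereas you argue directly that a single $\infConstAngle$-class forces a common $\alpha$ and hence $\varphi(v)=\alpha\piHalfRotation\rho(v)+b$, contradicting non-triviality. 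Your variant is slightly cleaner here, since it avoids modifying $\varphi$ (and thus any worry about whether the normalized flex is still \CnSymmetric{} or whether the relation $\infConstAngle$ is unchanged under subtracting a trivial flex); otherwise the two proofs coincide.
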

\begin{proof}
    We proceed as in the proof of \Cref{prop:infFlexImpliesTwoAPclasses},
    we only need to prove extra that $e \infConstAngle\, \omega e$ for every $e \in E_G$.
    Recall that
    \begin{align*}
        uv \infConstAngle\, \omega u \omega v \iff
            &\left(\realizVec{u}{v}, \realizVec{\omega u}{\omega v} \text{ are LI} \land
                \exists \alpha\in\RR:
                    \flexVec{u}{v} = \alpha \piHalfRotation \realizVec{u}{v}
                    \land
                    \flexVec{\omega u}{\omega v} = \alpha \piHalfRotation \realizVec{\omega u}{\omega v}
            \right)\\
            &\lor \left(
                \exists \beta\in\RR\setminus\{0\}:
                    \realizVec{u}{v} = \beta \realizVec{\omega u}{\omega v}
                    \land
                    \flexVec{u}{v} = \beta \flexVec{\omega u}{\omega v}
            \right).
    \end{align*}
    By the definition of a \CnSymmetric{} realization/infinitesimal flex,
    we have
    \begin{align*}
        \realizVec{\omega u}{\omega v} = \RotCn \realizVec{u}{v} \quad\text{ and }\quad
        \flexVec{\omega u}{\omega v} = \RotCn \flexVec{u}{v}\,.
    \end{align*}
    If $\realizVec{u}{v}, \realizVec{\omega u}{\omega v}$ are linearly independent,
    $uv \infConstAngle\, \omega u\, \omega v$ follows by a direct computation.
    Otherwise, we have necessarily $\beta=-1$ and $n=2$ and the statement follows as well.
\end{proof}
\begin{remark}
    In the following we use the walk-independence just as defined before,
    namely, the sum in \Cref{eq:zeroSumAPclass} is over \APclasses{},
    not \CnAPclasses{}.
\end{remark}

Using the previous definitions and results we get a symmetric counterpart of \Cref{prop:twoAPclassesImplyFlex} on the relation between \CnAPclasses{} and flexes.
\begin{proposition}
	\label{prop:twoCnAPclassesImplyCnFlex}
	Let $(G,\rho)$ be a \CnSymmetric{} walk-independent framework such that $G$ has $\ell\in \NN \cup \{\infty\}$ \CnAPclasses{}.
	If $\ell\geq 2$, then the framework is \CnSymmetric{} (infinitesimally) flexible.
	In particular, if $\ell \in \NN$ and  $\ell\geq 2$, there are $\ell-1$ independent ways how it can continuously flex.
\end{proposition}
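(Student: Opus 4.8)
The plan is to follow the construction in the proof of \Cref{prop:twoAPclassesImplyFlex}, but to bundle the \APclasses{} according to the $\Cn$-action so that assigning one rotation angle per bundle yields a flex preserving the symmetry. First I would note that $\omega$ permutes the \APclasses{} of $G$ (being an automorphism, it preserves $3$- and $4$-cycles, hence the relations $\Trel$ and $\Prel$), and that a \CnAPclass{} is precisely a union of \APclasses{} forming a single orbit of this permutation; in particular $\omega R = R$ for every \CnAPclass{} $R$. For a \CnAPclass{} $R$ and vertices $u,v$ I set
\[
	\Delta_R(u,v) := \sum_{\oriented{w_1}{w_2}\in W\cap R}(\rho(w_2)-\rho(w_1))\,,
\]
over the edges of a walk $W$ from $u$ to $v$ that lie in $R$; this is well defined by walk-independence applied to each \APclass{} in $R$ and summed (recall that here walk-independence refers to \APclasses{}), it is additive, $\Delta_R(u,v)+\Delta_R(v,w)=\Delta_R(u,w)$, and $\sum_R\Delta_R(u,v)=\rho(v)-\rho(u)$. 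The crucial equivariance is $\Delta_R(\omega u,\omega v) = \RotCn\Delta_R(u,v)$: applying $\omega$ to $W$ sends its edges lying in $R$ bijectively onto the edges of $\omega W$ lying in $\omega R=R$, while $\rho(\omega w_2)-\rho(\omega w_1)=\RotCn(\rho(w_2)-\rho(w_1))$ by $\Cn$-symmetry of $\rho$.

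Fix a vertex $\bar u$ and a \CnAPclass{} $R_0$. For a family $\boldsymbol\theta=(\theta_R)$ of angles indexed by the \CnAPclasses{}, with $\theta_{R_0}=0$, I would define
\[
	\rho_{\boldsymbol\theta}(v) := c(\boldsymbol\theta) + \sum_R \Rot{\theta_R}\Delta_R(\bar u,v)\,,
\]
where $c(\boldsymbol\theta)\in\RR^2$ is the unique solution of $(I-\RotCn)c(\boldsymbol\theta)=-\sum_R\Rot{\theta_R}\Delta_R(\bar u,\omega\bar u)$; this solution exists uniquely since $\det(I-\RotCn)=2(1-\cos\tfrac{2\pi}{n})>0$ as $n\geq 2$, and all sums are finite because a finite walk from $\bar u$ meets only finitely many \CnAPclasses{}. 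Combining additivity with the equivariance gives $\Delta_R(\bar u,\omega v)=\Delta_R(\bar u,\omega\bar u)+\RotCn\Delta_R(\bar u,v)$, and a short computation shows that the above $c(\boldsymbol\theta)$ is exactly what forces $\rho_{\boldsymbol\theta}(\omega v)=\RotCn\rho_{\boldsymbol\theta}(v)$ for all $v$; thus each $\rho_{\boldsymbol\theta}$ is a $\Cn$-symmetric placement. At $\boldsymbol\theta=\zerovec$ one gets $c(\zerovec)=\rho(\bar u)$ and $\rho_{\zerovec}=\rho$. For an edge $uv$ in a \CnAPclass{} $R_j$, $\Delta_R(\bar u,u)-\Delta_R(\bar u,v)=\Delta_R(v,u)$ equals $\rho(u)-\rho(v)$ for $R=R_j$ and $(0,0)$ otherwise, so $\rho_{\boldsymbol\theta}(u)-\rho_{\boldsymbol\theta}(v)=\Rot{\theta_{R_j}}(\rho(u)-\rho(v))$ and every $\rho_{\boldsymbol\theta}$ is equivalent to $\rho$.

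Now take a \CnAPclass{} $R_1\neq R_0$, which exists since $\ell\geq 2$, and consider $t\mapsto\rho_{\boldsymbol\theta(t)}$ with $\theta_{R_1}=t$ and all other angles $0$: this is a continuous $\Cn$-symmetric flex. It is non-trivial because for $uv\in R_1$ we have $\rho_{\boldsymbol\theta(t)}(u)-\rho_{\boldsymbol\theta(t)}(v)=\Rot{t}(\rho(u)-\rho(v))$, a genuine rotation of the nonzero vector $\rho(u)-\rho(v)$, while for an edge $u'v'$ in $R_0$ the difference stays $\rho(u')-\rho(v')$; hence the angle between these edges changes with $t$, which no congruence can do, so $\rho_{\boldsymbol\theta(t)}$ is non-congruent to $\rho$ for $t$ in a punctured neighbourhood of $0$. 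Differentiating at $t=0$ produces $\varphi(v)=\piHalfRotationMatrix\Delta_{R_1}(\bar u,v)$, a $\Cn$-symmetric infinitesimal flex (by equivariance of $\Delta_{R_1}$ and commutativity of rotations), non-trivial by the same angle argument ($\varphi(u)-\varphi(v)$ is $\rho(u)-\rho(v)$ turned by a right angle for $uv\in R_1$, and $(0,0)$ on every edge outside $R_1$, whereas a trivial infinitesimal flex vanishing on an edge is constant). When $\ell\in\NN$, the angles $\theta_{R_1},\dots,\theta_{R_{\ell-1}}$ are $\ell-1$ free parameters, giving $\ell-1$ independent flexes.

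The only genuine departure from the proof of \Cref{prop:twoAPclassesImplyFlex} — and the step that needs care — is the translation term $c(\boldsymbol\theta)$: one cannot normalise $\rho(\bar u)=(0,0)$ here, since the centre of the rotational symmetry is the origin and need not be occupied by any vertex, so the $(I-\RotCn)^{-1}$ correction is essential to keep each $\rho_{\boldsymbol\theta}$ correctly centred; everything else mirrors the non-symmetric argument. If $G$ happens to have an invariant vertex $\bar u$ then automatically $\rho(\bar u)=(0,0)$, $c(\boldsymbol\theta)\equiv(0,0)$, and $\rho_{\boldsymbol\theta}=\sum_R\Rot{\theta_R}\Delta_R(\bar u,\cdot)$ recovers the clean form of the earlier construction.
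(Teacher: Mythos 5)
Your construction is correct and its core is the same as the paper's: you form the per-class displacement sums $\Delta_R(\bar u,\cdot)$ over \CnAPclasses{}, rotate each by its own angle, and observe that edge lengths are preserved because an edge sees only its own class. Where you genuinely diverge is in how the placement is re-centred so that the deformed realizations remain \CnSymmetric{}: the paper subtracts the barycenter $\frac{1}{n}\sum_{j=0}^{n-1}\rho_\ttt(\omega^j\bar u)$ of the orbit of $\bar u$ and then verifies equivariance through the summation identities \eqref{eq:rhoiomegajubar}--\eqref{eq:rhoiomegav}, whereas you solve $(I-\RotCn)c(\boldsymbol\theta)=-\sum_R\Rot{\theta_R}\Delta_R(\bar u,\omega\bar u)$ directly, using that $I-\RotCn$ is invertible for $n\geq 2$. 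Since a placement admits at most one translate that is \CnSymmetric{} about the origin (again by invertibility of $I-\RotCn$), your $c(\boldsymbol\theta)$ necessarily coincides with the paper's barycentric correction; your derivation is shorter and makes the role of the condition $n\geq 2$ transparent, at the cost of being slightly less explicit about what the correction term is geometrically.

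One small inaccuracy: differentiating $t\mapsto\rho_{\boldsymbol\theta(t)}$ at $t=0$ does \emph{not} give $\varphi(v)=\piHalfRotationMatrix\Delta_{R_1}(\bar u,v)$ alone; it also produces the constant vector $\frac{d}{dt}c(\boldsymbol\theta(t))\big|_{t=0}=-(I-\RotCn)^{-1}\piHalfRotationMatrix\Delta_{R_1}(\bar u,\omega\bar u)$, and it is the sum of the two that is \CnSymmetric{} (the truncated formula satisfies $\varphi(\omega v)=\RotCn\varphi(v)$ only when $\Delta_{R_1}(\bar u,\omega\bar u)=(0,0)$). The paper's corresponding formula retains exactly this correction term. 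Dropping a constant does not affect $\varphi(u)-\varphi(v)$, so your non-triviality argument stands, but for the \CnSymmetric{} infinitesimal flexibility claim you should keep the full derivative. With that repair the proof is complete.
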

\begin{proof}
    First, we focus on continuous flexibility.
	Let $\sst{r_i}{i \in I}$ be the \CnAPclasses{} of $G$,
	where $I=\{0, \ldots, \ell-1\}$ or $\NN$
	depending whether there are finitely or infinitely many of them.
	We define a map $\ttt\mapsto \rho_\ttt$,
	where $\ttt=(t_i)_{i\in I}$ with $t_0=0$ and $t_i\in [0,2\pi)$ for $i\neq 0$, by
	\begin{equation*}
		\rho_\ttt(v) =  \sum_{i\in I} \Rot{t_i} \CnRibbonPart{i}(v)\,,
	\end{equation*}
	where
	\begin{equation*}
		\CnRibbonPart{i}(v) = \sum_{\oriented{w_1}{w_2}\in W \cap r_i} (\rho(w_2)-\rho(w_1))
	\end{equation*}
	with $W$ being any walk in $G$ from a fixed vertex $\bar{u}$ to $v$.
	By construction a \CnAPclass{} $r_i$ is a union of \APclasses{} $S_i=\{r_{i,1},\ldots,r_{i,k_i}\}$.
	Therefore, we check that~$\rho_\ttt(v)$ is defined as in the proof of
	\Cref{prop:twoAPclassesImplyFlex} just that \APclasses{} from the same \CnAPclass{}
	are rotated by the same angle:
	\begin{align*}
	    \rho_\ttt(v)
	        &= \sum_{i\in I} \Rot{t_i} \sum_{\oriented{w_1}{w_2}\in W \cap r_i} (\rho(w_2)-\rho(w_1))\\
	        &= \sum_{i\in I}\sum_{r\in S_i} \Rot{t_i} \sum_{\oriented{w_1}{w_2}\in W \cap r} (\rho(w_2)-\rho(w_1))\,.
	\end{align*}
	In particular, it is well-defined and the edge lengths are constant.
	In comparison to \Cref{prop:twoAPclassesImplyFlex}, we cannot assume that
	$\rho(\bar{u})$ is in the origin since $\rho$ is \CnSymmetric{}.
	Instead
	\begin{align*}
		\rho_{\zerovec}(v) = \rho(v) - \rho(\bar{u}).
	\end{align*}
	We adjust the map using the idea from \cite[Lemma 7.4]{DLinfinite}
	to make it \CnSymmetric{} and starting at $\rho$
    by translating the center of gravity of the orbit of $\bar{u}$ to the origin.
	Let
	\begin{equation*}
	    \tilde{\rho}_\ttt(v) :=
	        \rho_\ttt(v) - \frac{1}{n}\sum_{j=0}^{n-1}\rho_\ttt (\omega^j\bar{u})\,.
	\end{equation*}
	Note that if $\bar{u}$ is an invariant vertex, then $\rho(\bar{u})=(0,0)$
	and thus $\tilde{\rho}_\ttt = \rho_\ttt$.
	The map $\tilde{\rho}_\ttt$ is a (multi-parametric) flex of $(G,\rho)$:
	\begin{equation*}
	    \tilde{\rho}_{\zerovec}(v)
	    = \rho(v) - \rho(\bar{u}) - \frac{1}{n}\sum_{j=0}^{n-1}\left(\rho(\omega^j\bar{u}) - \rho(\bar{u})\right)
	    = \rho(v) - \underbrace{\frac{1}{n}\sum_{j=0}^{n-1}\RotCn^j\rho(\bar{u})}_{=(0,0) \text{ by symmetry}} = \rho(v)\,.
	\end{equation*}
	Let $W$ be walk from $\bar{u}$ to $\omega \bar{u}$.
	Concatenating $W,\omega W, \ldots, \omega^{j-1}W$ yields
	a walk $W_j$ from $\bar{u}$ to~$\omega^j \bar{u}$.
	Hence,
	\begin{align}\label{eq:rhoiomegajubar}
	    \CnRibbonPart{i}(\omega^j \bar{u})
	        &= \sum_{\oriented{w_1}{w_2}\in W_j \cap r_i} (\rho(w_2)-\rho(w_1))
	        = \sum_{k=0}^{j-1}\sum_{\oriented{w_1}{w_2}\in \omega^k W \cap r_i} (\rho(w_2)-\rho(w_1)) \nonumber\\
	        &= \sum_{k=0}^{j-1}\sum_{\oriented{w_1}{w_2}\in W \cap r_i} \left(\rho(\omega^k w_2)-\rho(\omega^k w_1)\right)\,.
	\end{align}
	Notice that in the last equality we use that $r_i$ is invariant under $\omega$.
	Now
	\begin{align}
	    \label{eq:auxCnIdentity}
	    \sum_{j=0}^{n-1}&\left(\CnRibbonPart{i}(\omega^j \bar{u}) - \RotCn\CnRibbonPart{i}(\omega^j \bar{u})\right) \nonumber \\
	    &\eqwithreference{\eqref{eq:rhoiomegajubar}} \sum_{j=0}^{n-1} \sum_{\oriented{w_1}{w_2}\in W \cap r_i}\sum_{k=0}^{j-1}
	         \left(\rho(\omega^k w_2)-\rho(\omega^k w_1)
	            - \rho(\omega^{k+1} w_2)+\rho(\omega^{k+1} w_1)\right) \nonumber \\
	    &= \sum_{\oriented{w_1}{w_2}\in W \cap r_i}\sum_{j=0}^{n-1}
	         \left(\rho(w_2)-\rho(w_1)
	            - \rho(\omega^{j-1+1} w_2)+\rho(\omega^{j-1+1} w_1)\right) \nonumber \\
	    &= \sum_{\oriented{w_1}{w_2}\in W \cap r_i} \Bigg(
	         n\left(\rho(w_2)-\rho(w_1)\right)
	         - \underbrace{\sum_{j=0}^{n-1}
	                \left(\rho(\omega^{j} w_2)-\rho(\omega^{j} w_1)\right)
	                }_{=(0,0) \text{ by symmetry}}\Bigg)
	   \eqwithreference{def.\ $\rho_i$} n \CnRibbonPart{i}(\omega \bar{u})\,.
	\end{align}
	We fix a vertex $v$.
	If $W$ is a walk from $\bar{u}$ to $\omega\bar{u}$
	and $W'$ a walk from $\bar{u}$ to $v$, then
	\begin{align}\label{eq:rhoiomegav}
	    \CnRibbonPart{i}(\omega v) &= \sum_{\oriented{w_1}{w_2}\in W \cap r_i} (\rho(w_2)-\rho(w_1))
	        + \sum_{\oriented{w_1}{w_2}\in \omega W' \cap r_i} (\rho(w_2)-\rho(w_1)) \nonumber\\
	    &= \CnRibbonPart{i}(\omega \bar{u})
	   + \sum_{\oriented{w_1}{w_2}\in W' \cap r_i} (\rho(\omega w_2)-\rho(\omega w_1))
	    = \CnRibbonPart{i}(\omega \bar{u}) + \RotCn \CnRibbonPart{i}(v)\,.
	\end{align}
	Finally,
	\begin{align*}
	    \tilde{\rho}_\ttt (\omega v)
	        &= \rho_\ttt(\omega v) - \frac{1}{n}\sum_{j=0}^{n-1}\rho_\ttt (\omega^j\bar{u})
	        =  \sum_{i\in I} \Rot{t_i} \left(\CnRibbonPart{i}(\omega v) - \frac{1}{n}\sum_{j=0}^{n-1} \CnRibbonPart{i}(\omega^j\bar{u})\right) \\
	        &\eqwithreference{\eqref{eq:rhoiomegav}} \sum_{i\in I} \Rot{t_i} \left(
	            \CnRibbonPart{i}(\omega \bar{u})
	            + \RotCn \CnRibbonPart{i}(v)
	            - \frac{1}{n}\sum_{j=0}^{n-1} \CnRibbonPart{i}(\omega^j\bar{u})
	            \right) \\
	        &\eqwithreference{\eqref{eq:auxCnIdentity}}
	            \sum_{i\in I} \Rot{t_i} \left(
	            \RotCn \CnRibbonPart{i}(v)
	            - \frac{1}{n}\RotCn\sum_{j=0}^{n-1} \CnRibbonPart{i}(\omega^j \bar{u})
	            \right) \\
	        & = \RotCn \sum_{i\in I} \Rot{t_i} \left(
	            \CnRibbonPart{i} (v)
	            - \frac{1}{n}\sum_{j=0}^{n-1} \CnRibbonPart{i}(\omega^j \bar{u})
	            \right) \\
	        & = \RotCn \left(
	            \rho_\ttt(v)
	            - \frac{1}{n}\sum_{j=0}^{n-1} \rho_\ttt(\omega^j \bar{u})
	            \right)
	        = \RotCn \tilde{\rho}_\ttt(v)\,.
	\end{align*}

	Hence, for instance $t \mapsto \rho_{(0,t,0,\ldots,0)}$, or respectively $t \mapsto \rho_{(0,t,0,\ldots)}$ in the case of infinitely many \APclasses,
	gives a \CnSymmetric{} flex $\tilde{\rho}_t$ of $(G,\rho)$
	which is non-trivial since the edges in $r_1$ change the angle
	with the edges in other \APclasses{}.
    The continuous flex $\tilde{\rho}_t$
    yields an infinitesimal one: for $v\in V_G$ we set
    \begin{align*}
        \varphi(v)&:=\frac{\partial \tilde{\rho}_t(v)}{\partial t}\Big|_{t=0}\,.
    \end{align*}
    We remark that this is equal to
    \[
        \piHalfRotationMatrix\cdot \left(\sum_{\oriented{w_1}{w_2}\in W \cap r_1} (\rho(w_2)-\rho(w_1))
        - \frac{1}{n}\sum_{j=0}^{n-1}\sum_{\oriented{w_1}{w_2}\in W_j \cap r_1} (\rho(w_2)-\rho(w_1))\right)\,,
    \]
    where $W$ is a walk from $\bar{u}$ to $v$
    and $W_j$ from $\bar{u}$ to $\omega^j\bar{u}$.
    If $uv\in r_1$, then
    \[
        \varphi(u)-\varphi(v)
            =\piHalfRotationMatrix \left(\rho(u)-\rho(v)\right)\neq(0,0)\,,
    \]
    which is orthogonal to $\rho(u)-\rho(v)$.
    For $uv\notin r_1$ (such edges exist by the assumption that there are at least two \APclasses{}),
    we have $\varphi(u)-\varphi(v)=(0,0)$.
    Hence, the infinitesimal flex $\varphi$ is not induced by a rigid motion.
    The infinitesimal flex is also \CnSymmetric{}:
    \begin{equation*}
        \varphi(\omega v)=\frac{\partial \tilde{\rho}_t(\omega v)}{\partial t}\Big|_{t=0}
            = \frac{\partial \RotCn \tilde{\rho}_t(v)}{\partial t}\Big|_{t=0}
            = \RotCn \frac{\partial \tilde{\rho}_t(v)}{\partial t}\Big|_{t=0}
            = \RotCn \varphi(v)\,.
    \end{equation*}
\end{proof}

In \Cref{fig:ex:symflex} we see an example of a graph with three \CnAPclasses{} and how it can flex.

\begin{figure}[ht]
    \centering
    \begin{tikzpicture}[scale=0.75]
        \foreach \a/\b [count=\i from 0] in {0/20,0/10,0/0,10/0,20/0}
        {
            \begin{scope}[xshift=4*\i cm]
            \node[fvertex] (1) at (0.8,0) {};
            \node[fvertex] (2) at (60:0.8) {};
            \node[fvertex] (3) at (0,0) {};
            \node[fvertex] (4) at ($(1)+(30+\a:0.75)$) {};
            \node[fvertex] (5) at ($(2)+(4)-(1)$) {};
            \node[fvertex] (6) at ($(2)+(150+\a:0.75)$) {};
            \node[fvertex] (7) at ($(3)+(6)-(2)$) {};
            \node[fvertex] (8) at ($(3)+(270+\a:0.75)$) {};
            \node[fvertex] (9) at ($(1)+(8)-(3)$) {};
            \node[fvertex] (10) at ($(4)+(30+\b:0.75)$) {};
            \coordinate (r\i) at (10);
            \node[fvertex] (11) at ($(5)+(10)-(4)$) {};
            \node[fvertex] (12) at ($(5)+(90+\b:0.75)$) {};
            \node[fvertex] (13) at ($(6)+(12)-(5)$) {};
            \node[fvertex] (14) at ($(6)+(150+\b:0.75)$) {};
            \node[fvertex] (15) at ($(7)+(14)-(6)$) {};
            \coordinate (l\i) at (15);
            \node[fvertex] (16) at ($(7)+(210+\b:0.75)$) {};
            \node[fvertex] (17) at ($(8)+(16)-(7)$) {};
            \node[fvertex] (18) at ($(8)+(270+\b:0.75)$) {};
            \node[fvertex] (19) at ($(9)+(18)-(8)$) {};
            \node[fvertex] (20) at ($(9)+(330+\b:0.75)$) {};
            \node[fvertex] (21) at ($(4)+(20)-(9)$) {};

            \draw[edge,col1] (1)edge(2) (2)edge(3) (3)edge(1) (4)edge(5) (6)edge(7) (8)edge(9);
            \draw[edge,col5] (1)edge(4) (1)edge(9) (2)edge(5) (2)edge(6) (3)edge(7) (3)edge(8) (5)edge(6) (7)edge(8) (9)edge(4);
            \draw[edge,col1] (10)edge(11) (14)edge(15) (18)edge(19);
            \draw[edge,col5] (12)edge(13) (16)edge(17) (20)edge(21);
            \draw[edge,col4] (11)edge(12) (13)edge(14) (15)edge(16) (17)edge(18) (19)edge(20) (21)edge(10);
            \draw[edge,col4] (4)edge(10) (5)edge(11) (5)edge(12) (6)edge(13) (6)edge(14) (7)edge(15) (7)edge(16) (8)edge(17) (8)edge(18) (9)edge(19) (9)edge(20) (4)edge(21);
            \end{scope}
        }
        \draw[col4,|-|] ($(0,-1.9)!(r2)!(1,-1.9)$)--($(0,-1.9)!(l0)!(1,-1.9)$);
        \draw[col5,|-|] ($(0,-2.1)!(l2)!(1,-2.1)$)--($(0,-2.1)!(r4)!(1,-2.1)$);
    \end{tikzpicture}
    \caption{A symmetric framework with two independent ways of flexing. In the first half the \CnAPclass{}  \protect\tikz{\protect\draw[edge,col4](0,0)--(0.5,0)} causes the flex. In the second part the \CnAPclass{} \protect\tikz{\protect\draw[edge,col5](0,0)--(0.5,0)} does.}
    \label{fig:ex:symflex}
\end{figure}

NAC-colorings may preserve symmetry, which is what we need now.
\begin{definition}
    Let $G$ be a \CnSymmetric{} graph.
	A NAC-coloring $\delta$ of $G$ is called \CnSymmetric{},
	if $\delta(\omega e)=\delta(e)$ for all $e\in E_G$
	and there is no edge connecting any
	two partially invariant connected components of
    the subgraphs
	\begin{align*}
		G_\red^\delta = (V_G, \{e\in E_G \colon \delta(e) = \red\})\, \text{ and }
		G_\blue^\delta = (V_G, \{e\in E_G \colon \delta(e) = \blue\})\,.
	\end{align*}
\end{definition}
The definition above follows~\cite{DGLrotSymmetry},
where it is shown that a \CnSymmetric{} graph has a \CnSymmetric{} flexible realization
if and only if it has a \CnSymmetric{} NAC-coloring.
The condition on invariant components not being connected
might seem to be superfluous for the following lemma about \CnSymmetric{} walk-independent frameworks,
but it is crucial in the construction of a flexible realization
from a \CnSymmetric{} NAC-coloring for a general \CnSymmetric{} graph
in the result from \cite{DGLrotSymmetry} mentioned above.

Now a Cartesian \CnSymmetric{} NAC-coloring can be determined by monochromatic \CnAPclasses.
\begin{lemma}
    \label{lem:CnMonochromaticAPclassIffCartesian}
    Let $(G,\rho)$ be a \CnSymmetric{} walk-independent framework.
    An edge coloring of $G$ by \red{} and \blue{}
    is a Cartesian \CnSymmetric{} NAC-coloring of $G$ if and only if
    it is surjective and the \CnAPclasses{} of $G$ are monochromatic.
\end{lemma}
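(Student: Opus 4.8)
The plan is to bootstrap from \Cref{lem:monochromaticAPclassIffCartesian}, the non-symmetric analogue, by superimposing the $\Cn$-action. For the implication $(\Rightarrow)$: if $\delta$ is a Cartesian \CnSymmetric{} NAC-coloring, then it is surjective by definition, $3$-cycles are monochromatic and opposite edges of $4$-cycles receive equal colors (exactly as argued for \Cref{lem:monochromaticAPclassIffCartesian}), and $\delta(\omega e)=\delta(e)$ for all $e$ by definition of a \CnSymmetric{} NAC-coloring; hence any two edges related by $\Trel$, by $\Prel$, or by a power of $\omega$ agree in color, so every \CnAPclass{} is monochromatic. For the converse, assume $\delta$ is surjective with monochromatic \CnAPclasses{}. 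Each \APclass{} lies inside a \CnAPclass{}, so \APclasses{} are monochromatic and $\delta$ is a Cartesian NAC-coloring by \Cref{lem:monochromaticAPclassIffCartesian}; moreover $\delta(\omega e)=\delta(e)$ since $e$ and $\omega e$ lie in one \CnAPclass{}. What remains, and this is the crux, is the condition on partially invariant components. Since distinct components of $G_\red^\delta$ are vertex-disjoint, any edge joining two of them is blue, so it suffices to show that no blue edge joins two partially invariant components of $G_\red^\delta$, the statement for $G_\blue^\delta$ following by exchanging the colors.

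For this I would reuse the maps $\CnRibbonPart{r}\colon V_G\to\RR^2$ from the proofs of \Cref{prop:twoAPclassesImplyFlex,prop:twoCnAPclassesImplyCnFlex}, given for an \APclass{} or \CnAPclass{} $r$ by $\CnRibbonPart{r}(v)=\sum_{\oriented{w_1}{w_2}\in W\cap r}(\rho(w_2)-\rho(w_1))$ for a walk $W$ from a fixed $\bar u$ to $v$; walk-independence makes this well-defined, $\sum_{\hat r}\CnRibbonPart{\hat r}=\rho-\rho(\bar u)$ with the sum over all \CnAPclasses{}, $\CnRibbonPart{r}(u)=\CnRibbonPart{r}(v)$ whenever $uv$ is an edge outside $r$, and, for a \CnAPclass{} $\hat s$, identity \eqref{eq:rhoiomegav} reads $\CnRibbonPart{\hat s}(\omega v)=\CnRibbonPart{\hat s}(\omega\bar u)+\RotCn\CnRibbonPart{\hat s}(v)$. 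Fix a blue \CnAPclass{} $\hat s$. A red edge lies in a red \APclass{}, hence not in $\hat s$, so $\CnRibbonPart{\hat s}$ is unchanged along every red edge and therefore is constant, say with value $c_A$, on each component $A$ of $G_\red^\delta$. If $A$ is partially invariant with $\omega^a A=A$, $1\le a\le n-1$, then iterating \eqref{eq:rhoiomegav} gives $\CnRibbonPart{\hat s}(\omega^a u)=\bigl(\textstyle\sum_{k=0}^{a-1}\RotCn^k\bigr)\CnRibbonPart{\hat s}(\omega\bar u)+\RotCn^a\CnRibbonPart{\hat s}(u)$; taking $u\in A$, so that $\omega^a u\in A$ and both $\CnRibbonPart{\hat s}$-values equal $c_A$, and using $\sum_{k=0}^{a-1}\RotCn^k=(I-\RotCn^a)(I-\RotCn)^{-1}$ together with the invertibility of $I-\RotCn^a$ (as $\RotCn^a\neq I$), one obtains $c_A=(I-\RotCn)^{-1}\CnRibbonPart{\hat s}(\omega\bar u)$, a value that depends on neither $A$ nor $a$. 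Consequently, if some blue edge $uv$ had $u$ in a partially invariant component $A$ and $v$ in a partially invariant component $A'$ of $G_\red^\delta$, then $\CnRibbonPart{\hat r}(u)=\CnRibbonPart{\hat r}(v)$ for every \CnAPclass{} $\hat r$ — for blue $\hat r$ since $c_A=c_{A'}$, and for red $\hat r$ since the blue edge $uv$ is outside $\hat r$ — and summing yields $\rho(u)-\rho(v)=\sum_{\hat r}(\CnRibbonPart{\hat r}(u)-\CnRibbonPart{\hat r}(v))=(0,0)$, impossible for an edge. That completes the argument.

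The main obstacle is precisely this verification of the partially-invariant-component condition. Its one subtlety is that a naive closed-walk computation controls $\CnRibbonPart{\hat s}$ on $A$ only modulo the factor $I-\RotCn^{\operatorname{lcm}(a,a')}$ (with $a,a'$ stabilizer exponents of the two components), which is singular exactly when $\operatorname{lcm}(a,a')=n$; the identity above circumvents this because the telescoping $(I-\RotCn^a)(I-\RotCn)^{-1}$ absorbs the troublesome factor and leaves a value of $c_A$ independent of $a$ altogether. The two easy implications and the bookkeeping with $\CnRibbonPart{\cdot}$ are then routine given \Cref{lem:monochromaticAPclassIffCartesian} and the definition of \CnAPclasses{}.
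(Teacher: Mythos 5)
Your proof is correct, and while the two easy directions coincide with the paper's (both reduce to \Cref{lem:monochromaticAPclassIffCartesian} plus the observation that a \CnAPclass{} is a union of \APclasses{} in one orbit), your treatment of the crux --- the condition on partially invariant components --- takes a genuinely different route. The paper constructs the symmetric flex $\tilde{\rho}_\ttt$ with $t_i=t$ on \red{} classes and $t_i=0$ on \blue{} ones, shows via a \red{} path from $v_i$ to $\omega^{k_i}v_i$ that $(\RotCn^{k_i}-E)\tilde{\rho}_t(v_i)=\Rot{t}(\rho(\omega^{k_i}v_i)-\rho(v_i))$, and derives a contradiction from the fact that a \blue{} edge would force $\tilde{\rho}_t(v_1)-\tilde{\rho}_t(v_2)$ to be constant in $t$ while it is of the form $\Rot{t}(\text{nonzero vector})$, with injectivity of $\rho$ ruling out the zero vector. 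You instead stay entirely static: iterating \eqref{eq:rhoiomegav} and cancelling the invertible factor $I-\RotCn^{a}$ shows that the constant value $c_A$ of $\CnRibbonPart{\hat{s}}$ on any partially invariant \red{} component equals the universal quantity $(I-\RotCn)^{-1}\CnRibbonPart{\hat{s}}(\omega\bar{u})$, so a hypothetical \blue{} edge between two such components would have $\CnRibbonPart{\hat{r}}(u)=\CnRibbonPart{\hat{r}}(v)$ for every \CnAPclass{} $\hat{r}$ and hence $\rho(u)=\rho(v)$, which is impossible. Your version buys a cleaner endgame (no flex needs to be constructed, no case split on whether the rotated vector vanishes, and no \red{} path inside the component is needed --- only that $u$ and $\omega^a u$ both lie in it), at the cost of the slightly less transparent algebraic identity $\sum_{k=0}^{a-1}\RotCn^k=(I-\RotCn^a)(I-\RotCn)^{-1}$; the paper's version has the advantage of reusing verbatim the flex already built in \Cref{prop:twoCnAPclassesImplyCnFlex}. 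Both hinge on the same two ingredients, namely \eqref{eq:rhoiomegav} and the invertibility of $I-\RotCn^{a}$ for $1\leq a\leq n-1$.
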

\begin{proof}
    Assume that $\delta$ is a Cartesian \CnSymmetric{} NAC-coloring of $G$.
    By \Cref{lem:monochromaticAPclassIffCartesian}, \APclasses{} are monochromatic.
    Since a \CnAPclass{} is the union of \APclasses{} in the same orbit,
    it is monochromatic by the invariance of $\delta$.

    On the other hand, assume \CnAPclasses{} are monochromatic.
    Then also \APclasses{} are monochromatic and
    hence $\delta$ is a Cartesian NAC-coloring by \Cref{lem:monochromaticAPclassIffCartesian}.
    The invariance of $\delta$ under the action of $\omega$
    is implied by the invariance of \CnAPclasses{}.
    We are only left to prove that there are no two partially invariant connected
    components of $G_\red^\delta$ or $G_\blue^\delta$ connected by an edge.
    The idea of the following comes from \cite[Lemma~1]{DGLrotSymmetry}.
    By surjectivity of $\delta$, there are at least two \CnAPclasses{}.
    Hence, we can construct a flex
    \begin{equation*}
	    \tilde{\rho}_t(v) =
	        \rho_t(v) - \frac{1}{n}\sum_{j=0}^{n-1}\rho_t (\omega^j\bar{u})
	\end{equation*}
	as in the proof of \Cref{prop:twoCnAPclassesImplyCnFlex},
	where for $\ttt=(t_i)_{i\in I}$
	we set $t_i=0$ for all \blue{} \CnAPclasses{} and
	$t_i=t$ for all \red{} \CnAPclasses{}.
	For $uv\in E_G$,
	we have (see also the proof of \Cref{prop:twoAPclassesImplyFlex})
	\begin{align}
	    \label{eq:redBlueCnFlex}
		\tilde{\rho}_t(u) - \tilde{\rho}_t(v) =
		\rho_t(u) - \rho_t(v) =
		    \begin{cases}
                \Rot{t}(\rho(u) - \rho(v)) & \text{if } \delta(uv)=\red{}  \\
                \rho(u) - \rho(v)  & \text{if } \delta(uv)=\blue{}\,.
		    \end{cases}
	\end{align}
	Suppose that $V_1, V_2$ are the vertex sets
	of two distinct partially invariant connected components of $G_\red^\delta$.
	We show that $v_1v_2\notin E_G$ for any $v_i\in V_i$.
	There are $1<k_i<n$ such that $\omega^{k_i} v_i\in V_i$
	and hence also \red{} paths $W_i=(v_i=u_0^i, \ldots, u_{s_i}^i=\omega^{k_i}v_i)$
	from $v_i$ to~$\omega^{k_i}v_i$.
	We consider
	\begin{align*}
	    (\underbrace{\RotCn^{k_i} - E}_{T_i})\tilde{\rho}_t(v_i)
	      &= \tilde{\rho}_t(\omega^{k_i}v_i) - \tilde{\rho}_t(v_i)
	      = \sum_{j=0}^{s_i-1} \left(\tilde{\rho}_t(u_{j+1}^i) - \tilde{\rho}_t(u_j^i)\right)\\
	      &\eqwithreference{\eqref{eq:redBlueCnFlex}} \sum_{j=0}^{s_i-1} \Rot{t}\left(\rho(u_{j+1}^i) - \rho(u_j^i)\right)
	      = \Rot{t}\left(\rho(\omega^{k_i}v_i) - \rho(v_i)\right)\,.
	\end{align*}
	Since $T_i$ is invertible and commutes with $\Rot{t}$,
	\begin{align*}
	    \tilde{\rho}_t(v_1)-\tilde{\rho}_t(v_2)
	        &= T_1^{-1}\Rot{t}\left(\rho(\omega^{k_1}v_1) - \rho(v_1)\right)
	         - T_2^{-1}\Rot{t}\left(\rho(\omega^{k_2}v_2) - \rho(v_2)\right) \\
	        &= \Rot{t}\left( T_1^{-1}\left(\rho(\omega^{k_1}v_1) - \rho(v_1)\right)
	         - T_2^{-1}\left(\rho(\omega^{k_2}v_2) - \rho(v_2)\right)\right)\,.
	\end{align*}
	If the vector rotated by $\Rot{t}$ is zero,
	then $\tilde{\rho}_t(v_1)=\tilde{\rho}_t(v_2)$ for all $t$,
	in particular $\rho_0(v_1)=\tilde{\rho}_0(v_1)=\tilde{\rho}_0(v_2)=\rho_0(v_2)$,
	which contradicts injectivity of $\rho$.
	Therefore, $\tilde{\rho}_t(v_1)-\tilde{\rho}_t(v_2)$ is non-constant.
	The edge $v_1v_2$ cannot exist,
	since it would be \blue{} which would contradict \Cref{eq:redBlueCnFlex}.
	In the case of two partially invariant connected components of $G_\blue^\delta$,
	by analogous computation we get that $\tilde{\rho}_t(v_1)-\tilde{\rho}_t(v_2)$
	is constant, contradicting \Cref{eq:redBlueCnFlex} again.
\end{proof}

This yields the symmetric counter part of \Cref{thm:TPframeworks}.
\begin{theorem}
    \label{thm:CnTPframeworks}
    Let $(G,\rho)$ be a \CnSymmetric{} walk-independent framework.
    The following statements are equivalent:
    \begin{enumerate}
        \item\label{it:CnFlex} $(G,\rho)$ is \CnSymmetric{} flexible,
        \item\label{it:CnTwoAPclasses} $G$ has at least two \CnAPclasses{},
        \item\label{it:CnCartNAC} $G$ has a Cartesian \CnSymmetric{} NAC-coloring.
    \end{enumerate}
    If in addition all 3-cycles in $G$ form non-degenerate triangles in $\rho$,
    then the above statements are equivalent to $(G,\rho)$
    being infinitesimally \CnSymmetric{} flexible.
    In particular, the statement holds for \CnSymmetric{}
    \Pframeworks{} and \TPframeworks{}.
\end{theorem}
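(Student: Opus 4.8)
The plan is to assemble the statement from the \CnSymmetric{} analogues of the ingredients already used for \Cref{thm:TPframeworks}, essentially all of which are in place. Since $(G,\rho)$ is walk-independent, $\rho$ is a parallelogram placement, so all induced 4-cycles are non-degenerate parallelograms; this is exactly the hypothesis of \Cref{prop:CnFlexImpliesTwoCnAPclasses}, which therefore gives \ref{it:CnFlex}$\Rightarrow$\ref{it:CnTwoAPclasses}. The reverse implication \ref{it:CnTwoAPclasses}$\Rightarrow$\ref{it:CnFlex} is precisely \Cref{prop:twoCnAPclassesImplyCnFlex}, which in fact constructs the flex explicitly from the \CnAPclasses{}.

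For the equivalence of \ref{it:CnTwoAPclasses} and \ref{it:CnCartNAC} I would use \Cref{lem:CnMonochromaticAPclassIffCartesian}. If $G$ has at least two \CnAPclasses{}, then coloring the edges of one of them (or of a union of some of them) \red{} and the remaining edges \blue{} yields a surjective coloring all of whose \CnAPclasses{} are monochromatic, hence a Cartesian \CnSymmetric{} NAC-coloring by that lemma. Conversely, a Cartesian \CnSymmetric{} NAC-coloring is surjective and, again by \Cref{lem:CnMonochromaticAPclassIffCartesian}, has all \CnAPclasses{} monochromatic; if there were only one \CnAPclass{} it would be the whole edge set $E_G$, and its being monochromatic would contradict surjectivity, so there must be at least two.

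When additionally all 3-cycles form non-degenerate triangles, I add the infinitesimal layer. On the one hand \Cref{prop:CnInfFlexImpliesTwoCnAPclasses} shows that \CnSymmetric{} infinitesimal flexibility implies \ref{it:CnTwoAPclasses}; on the other hand the flex constructed in \Cref{prop:twoCnAPclassesImplyCnFlex} already comes, via differentiation, with a non-trivial \CnSymmetric{} infinitesimal flex, so that \ref{it:CnTwoAPclasses} also implies infinitesimal \CnSymmetric{} flexibility. Combined with the equivalences above, the infinitesimal condition joins the list. For the ``in particular'' claim, \Cref{lem:zeroSumPframework} and \Cref{lem:zeroSumAPclass} say that a \CnSymmetric{} (braced) \Pframework{}, respectively a \CnSymmetric{} \TPframework{}, is walk-independent, so the first three conditions apply; moreover in both cases every 3-cycle is a non-degenerate triangle (by the definition of a \TPframework{}, and because in a braced \Pframework{} triangles occur only inside braced 4-cycles, whose vertices are not collinear, while an unbraced \Pframework{} has no 3-cycles at all), so the infinitesimal equivalence applies as well.

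I do not expect a genuine obstacle here: the argument is a careful chaining of already-established results. The only point requiring attention is the bookkeeping of hypotheses: which non-degeneracy conditions are automatic from walk-independence (those on induced 4-cycles) and which must be imposed separately (those on 3-cycles), so that the equivalence with infinitesimal \CnSymmetric{} flexibility is claimed only under the stated extra assumption.
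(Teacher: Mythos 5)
Your proposal is correct and follows essentially the same route as the paper's proof: both chain \Cref{prop:CnFlexImpliesTwoCnAPclasses}, \Cref{prop:twoCnAPclassesImplyCnFlex}, \Cref{lem:CnMonochromaticAPclassIffCartesian}, \Cref{prop:CnInfFlexImpliesTwoCnAPclasses}, and \Cref{lem:zeroSumPframework} together with \Cref{lem:zeroSumAPclass} in exactly the same way. Your extra care about which non-degeneracy hypotheses come for free from walk-independence and which must be assumed separately is sound and merely makes explicit what the paper leaves implicit.
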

\begin{proof}
    The equivalence of \ref{it:CnFlex} and \ref{it:CnTwoAPclasses}
    follows from \Cref{prop:CnFlexImpliesTwoCnAPclasses,prop:twoCnAPclassesImplyCnFlex}.
    \Cref{lem:CnMonochromaticAPclassIffCartesian} gives the equivalence
    of \ref{it:CnTwoAPclasses} and \ref{it:CnCartNAC}.
    The equivalence with infinitesimal flexibility is shown
    in \Cref{prop:CnInfFlexImpliesTwoCnAPclasses,prop:twoCnAPclassesImplyCnFlex}.
    \CnSymmetric{} \Pframeworks{} and \TPframeworks{} are walk-independent
    by \Cref{lem:zeroSumPframework,lem:zeroSumAPclass}.
\end{proof}

\section{Application to tessellations}\label{sec:tess}
As an illustration, we conclude the paper applying the theory developed above to a few infinite frameworks
constructed as 1-skeleta of tilings of the plane by regular polygons.
Obviously, the regular tiling by triangles is rigid, whereas the tiling by hexagons, resp.\ squares, is flexible.
More interesting examples are obtained by allowing more regular polygons to be used simultaneously.
We remark that polygons are regular only in the starting realization as flexing might destroy regularity.

Here we restrict to periodic, edge-to-edge tilings. Such a tiling is called \emph{$k$-uniform} if there are $k$
vertex orbits under the symmetry of the tiling.
A possible notation is listing for each vertex orbit the number of edges of the tiles to which the vertex is incident \cite{CundyRollett}.
For instance $[3^6;3^2.4.3.4;3^2.4.3.4]$ denotes that there are three orbits:
vertices of the first one are incident to six triangles,
while vertices of the other two orbits are incident to two triangles, a square, a triangle and another square (see \Cref{fig:333333-33434-33434}).
The following examples can also be found in the online library
Antwerp v3.0\footnote{\url{https://antwerp.hogg.io/}} described
in~\cite{GomJau-Hogg}.

When there are regular hexagons, we follow the approach of~\cite{Nagy2006}, namely,
we force opposite edges to be parallel.
This can be achieved by adding parallelograms, see \Cref{fig:hexagon}.
We do not display the added edges in the figures.
In the examples where we discuss possible \CnSymmetric{} flexes,
the parallelograms can be added preserving the symmetry.

Now pictures of some tessellations follow:
\Cref{fig:3636} is 1-uniform,
\Cref{fig:3464-33434} is 2-uniform and
\Cref{fig:333333-33434-33434,fig:33434-3464-3446,fig:3366-3636-666,fig:infinitelyMany} are 3-uniform.
Some flexes exhibit \emph{auxetic} behavior, compare~\cite{Mitschke2013,Mitschke2013a}.

\tikzexternalenable

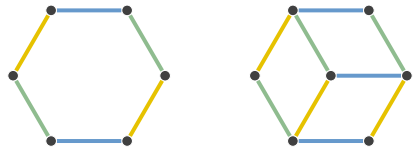
\begin{figure}[ht]
    \centering
    \tikzsetnextfilename{hexagon}
    \begin{tikzpicture}
        \foreach \w [count=\i] in {0,60,...,300}
        {
            \node[gvertex] (a\i) at (\w:1) {};
        }
        \draw[edge,col1] (a1)edge(a2) (a4)edge(a5);
        \draw[edge,col4] (a2)edge(a3) (a5)edge(a6);
        \draw[edge,col3] (a3)edge(a4) (a6)edge(a1);
    \end{tikzpicture}
    \qquad
    \tikzsetnextfilename{hexagonWithParallelograms}
    \begin{tikzpicture}
        \foreach \w [count=\i] in {0,60,...,300}
        {
            \node[gvertex] (a\i) at (\w:1) {};
        }
        \node[gvertex] (a0) at (0,0) {};
        \draw[edge,col1] (a1)edge(a2) (a4)edge(a5) (a3)edge(a0);
        \draw[edge,col4] (a2)edge(a3) (a5)edge(a6) (a1)edge(a0);
        \draw[edge,col3] (a3)edge(a4) (a6)edge(a1) (a5)edge(a0);
    \end{tikzpicture}
    \caption{A hexagon with opposite edges in the same \APclass{} and how it can be achieved by adding parallelograms.}
    \label{fig:hexagon}
\end{figure}

\begin{figure}[H]
    \centering
    \tikzsetnextfilename{3636}
    \begin{tikzpicture}[scale=0.4]
            \begin{pgfonlayer}{background}
                \clip[rounded corners=0.4cm] (-15,10)rectangle ++(30,-20);
            \end{pgfonlayer}

            \begin{scope}
            \clip[rounded corners=0.4cm] (-15,10)rectangle ++(30,-20);
            \begin{scope}
            \foreach \y [evaluate=\y as \xs using {Mod(\y,2)/2-2.25},evaluate=\xs as \xse using \xs+4] in {-5,...,5}
            {
                \foreach \x in {\xs,...,\xse}
                {
                    \coordinate (no) at ($\x*4*(30:1)+\x*4*(-30:1)+\y*2*(90:1)$);
                    \begin{scope}[shift={(no)}]
                        \foreach \r [evaluate=\r as \ra using 60*\r-30,evaluate=\r as \rb using 60*\r+30] in {1,2,...,6}
                        {
                            \node[fvertex] (a\r) at (\ra:1) {};
                            \node[fvertex] (b\r) at ($(a\r)+(\rb:1)$) {};
                        }
                        \draw[edge,colR] (a1)edge(a2) (a1)edge(b1) (b1)edge(a2) (a4)edge(a5) (a4)edge(b4) (b4)edge(a5);
                        \draw[edge,colB] (a2)edge(a3) (a2)edge(b2) (b2)edge(a3) (a5)edge(a6) (a5)edge(b5) (b5)edge(a6);
                        \draw[edge,colG] (a3)edge(a4) (a3)edge(b3) (b3)edge(a4) (a6)edge(a1) (a6)edge(b6) (b6)edge(a1);
                        \node[fvertex] (c6) at ($(b6)+(30:1)$) {};
                        \node[fvertex] (c6s) at ($(b6)+(-30:1)$) {};
                        \node[fvertex] (c3) at ($(b3)+(180+30:1)$) {};
                        \node[fvertex] (c3s) at ($(b3)+(180-30:1)$) {};
                        \draw[edge,colY] (b6)edge(c6) (c6)edge(c6s) (c6s)edge(b6);
                        \draw[edge,colY] (b3)edge(c3) (c3)edge(c3s) (c3s)edge(b3);
                        \begin{pgfonlayer}{background}
                            \fill[colR,face] (a1.center)--(b1.center)--(a2.center)--cycle;
                            \fill[colB,face] (a2.center)--(b2.center)--(a3.center)--cycle;
                            \fill[colG,face] (a3.center)--(b3.center)--(a4.center)--cycle;
                            \fill[colR,face] (a4.center)--(b4.center)--(a5.center)--cycle;
                            \fill[colB,face] (a5.center)--(b5.center)--(a6.center)--cycle;
                            \fill[colG,face] (a6.center)--(b6.center)--(a1.center)--cycle;

                            \fill[colY,face] (b3.center)--(c3.center)--(c3s.center)--cycle;
                            \fill[colY,face] (b6.center)--(c6.center)--(c6s.center)--cycle;
                        \end{pgfonlayer}

                    \end{scope}
                }
            }
            \end{scope}
            \end{scope}
            \fill[white,path fading=east] (-15.1,9)--(-14,9)--(-14,-9)--(-15.1,-9)--cycle;
            \fill[white,path fading=west] (15.1,9)--(14,9)--(14,-9)--(15.1,-9)--cycle;
            \fill[white,path fading=south] (-14,10.1)--(-14,9)--(14,9)--(14,10.1)--cycle;
            \fill[white,path fading=north] (-14,-10.1)--(-14,-9)--(14,-9)--(14,-10.1)--cycle;
            \begin{scope}
                \clip (13.99,9.003)rectangle(15.1,10.1);
                \fill[white,path fading=fade out] (14,9) circle[radius=1.1cm];
            \end{scope}
            \begin{scope}
                \clip (13.99,-9)rectangle(15.1,-10.1);
                \fill[white,path fading=fade out] (14,-9) circle[radius=1.1cm];
            \end{scope}
            \begin{scope}
                \clip (-14.004,9.003)rectangle(-15.1,10.1);
                \fill[white,path fading=fade out] (-14,9) circle[radius=1.1cm];
            \end{scope}
            \begin{scope}
                \clip (-14.004,-9)rectangle(-15.1,-10.1);
                \fill[white,path fading=fade out] (-14,-9) circle[radius=1.1cm];
            \end{scope}
        \end{tikzpicture}
    \caption{Tiling $[3636]$ has 4 \APclasses{}.}
    \label{fig:3636}
\end{figure}

\begin{figure}[H]
    \centering
    \tikzsetnextfilename{333333-33434-33434}
    \begin{tikzpicture}[scale=0.4]
            \newcommand{\w}{0}
            \begin{pgfonlayer}{background}
                \clip[rounded corners=0.4cm] (-15,10)rectangle ++(30,-20);
            \end{pgfonlayer}

            \begin{scope}
            \clip[rounded corners=0.4cm] (-15,10)rectangle ++(30,-20);
            \begin{scope}
            \foreach \y [evaluate=\y as \xs using {Mod(\y,2)/2-3},evaluate=\xs as \xse using \xs+5] in {-6,...,6}
            {
                \foreach \x in {\xs,...,\xse}
                {
                    \begin{scope}[shift={($\y*(90:1)+\y*0.5*(\w+60:1)+\y*0.5*(\w+120:1)+\x*2*(30:1)+\x*2*(-30:1)+\x*3*(\w:1)$)}]
                        \node[fvertex] (a0) at (0,0) {};
                        \foreach \r [evaluate=\r as \rr using 60*\r-30] in {1,2,...,6}
                        {
                            \node[fvertex] (a\r) at (\rr:1) {};
                            \draw[edge,colB]  (a0)edge(a\r);
                        }
                        \draw[edge,colB] (a1)edge(a2) (a2)edge(a3) (a3)edge(a4) (a4)edge(a5) (a5)edge(a6) (a6)edge(a1);
                        \node[fvertex] (b1) at ($(a1)+(\w:1)$) {};
                        \node[fvertex] (b2) at ($(a1)+(\w+60:1)$) {};
                        \node[fvertex] (b3) at ($(b1)+(b2)-(a1)$) {};
                        \draw[edge,colR] (a1)edge(b1) (a1)edge(b2) (b1)edge(b2) (b1)edge(b3) (b2)edge(b3);
                        \node[fvertex] (c1) at ($(a6)+(\w:1)$) {};
                        \node[fvertex] (c2) at ($(a6)+(\w-60:1)$) {};
                        \node[fvertex] (c3) at ($(c1)+(c2)-(a6)$) {};
                        \draw[edge,colR] (a6)edge(c1) (a6)edge(c2) (c1)edge(c2) (c1)edge(c3) (c2)edge(c3);
                        \node[fvertex] (d1) at ($(a5)+(\w-60:1)$) {};
                        \node[fvertex] (d2) at ($(a5)+(\w-120:1)$) {};
                        \node[fvertex] (d3) at ($(d1)+(d2)-(a5)$) {};
                        \draw[edge,colR] (a5)edge(d1) (a5)edge(d2) (d1)edge(d2) (d1)edge(d3) (d2)edge(d3);

                        \node[fvertex] (d2s) at ($(c1)+(30:1)$) {};
                        \draw[edge,colB] (b1)edge(c1) (b1)edge(d2s) (c1)edge(d2s);
                        \node[fvertex] (b2s) at ($(d1)+(-30:1)$) {};
                        \draw[edge,colB] (d1)edge(c2) (d1)edge(b2s) (c2)edge(b2s);
                        \begin{pgfonlayer}{background}
                            \fill[colB,face] (a1.center)--(a2.center)--(a3.center)--(a4.center)--(a5.center)--(a6.center)--cycle;
                            \fill[colB,face] (b1.center)--(c1.center)--(d2s.center)--cycle;
                            \fill[colB,face] (d1.center)--(c2.center)--(b2s.center)--cycle;
                            \foreach \bcd/\a in {b/1,c/6,d/5}
                            {
                                \fill[colR,face] (a\a.center)--(\bcd1.center)--(\bcd3.center)--(\bcd2.center)--cycle;
                            }
                        \end{pgfonlayer}

                    \end{scope}
                }
            }
            \end{scope}
            \end{scope}
            \fill[white,path fading=east] (-15.1,9)--(-14,9)--(-14,-9)--(-15.1,-9)--cycle;
            \fill[white,path fading=west] (15.1,9)--(14,9)--(14,-9)--(15.1,-9)--cycle;
            \fill[white,path fading=south] (-14,10.1)--(-14,9)--(14,9)--(14,10.1)--cycle;
            \fill[white,path fading=north] (-14,-10.1)--(-14,-9)--(14,-9)--(14,-10.1)--cycle;
            \begin{scope}
                \clip (13.99,9.003)rectangle(15.1,10.1);
                \fill[white,path fading=fade out] (14,9) circle[radius=1.1cm];
            \end{scope}
            \begin{scope}
                \clip (13.99,-9)rectangle(15.1,-10.1);
                \fill[white,path fading=fade out] (14,-9) circle[radius=1.1cm];
            \end{scope}
            \begin{scope}
                \clip (-14.004,9.003)rectangle(-15.1,10.1);
                \fill[white,path fading=fade out] (-14,9) circle[radius=1.1cm];
            \end{scope}
            \begin{scope}
                \clip (-14.004,-9)rectangle(-15.1,-10.1);
                \fill[white,path fading=fade out] (-14,-9) circle[radius=1.1cm];
            \end{scope}
        \end{tikzpicture}
  \caption{Tiling $[3^6;3^2.4.3.4;3^2.4.3.4]$ has two \APclasses{},
  both of them are \CnSymmetric{} for $\Cn$ corresponding to 2-fold  symmetry around the center of a red edge
  incident to two red triangles, 3-fold symmetry around the center of an ``isolated'' blue triangle,
  or 6-fold symmetry around a vertex of degree~6.
  Therefore, the flexes have respective $\Cn$-symmetries, when translated accordingly.}
  \label{fig:333333-33434-33434}
\end{figure}
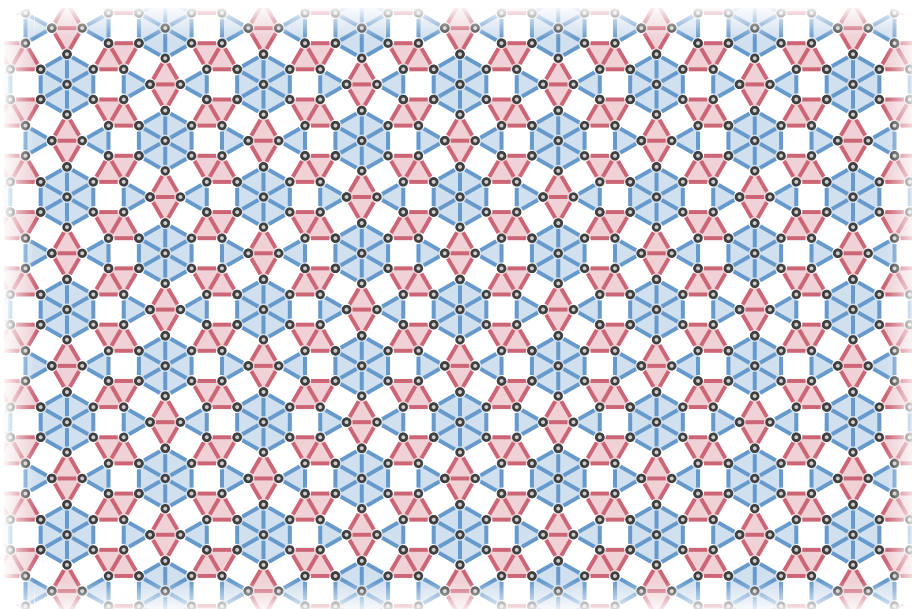

\begin{figure}[H]
    \centering
    \tikzsetnextfilename{3464-33434}
    \begin{tikzpicture}[scale=0.4]
            \begin{pgfonlayer}{background}
                \clip[rounded corners=0.4cm] (-15,10)rectangle ++(30,-20);
            \end{pgfonlayer}

            \begin{scope}
            \clip[rounded corners=0.4cm] (-15,10)rectangle ++(30,-20);
            \begin{scope}
            \foreach \y [evaluate=\y as \xs using {Mod(\y,2)/2-3},evaluate=\xs as \xse using \xs+5] in {-6,...,6}
            {
                \foreach \x [count=\xi,
                             evaluate=\x as \ci using {int(Mod(Mod(\xi+1,2)*2+\y,4)+1)},
                             evaluate=\x as \cj using {int(Mod(\ci+1,4)+1)},
                             evaluate=\x as \ck using {int(Mod(\ci+(-1+Mod(\y+1,2)*2)-1,4)+1)}
                             ] in {\xs,...,\xse}
                {
                    \begin{scope}[shift={($\x*2*(30:1)+\x*2*(-30:1)+\x*3*(0:1)+\y*(90:1)+\y*0.5*(60:1)+\y*0.5*(120:1)$)}]
                        \foreach \r [evaluate=\r as \rr using 60*\r-30] in {1,2,...,6}
                        {
                            \node[fvertex] (a\r) at (\rr:1) {};
                            \node[fvertex] (b\r) at ($(a\r)+(-60+\r*60:1)$) {};
                            \node[fvertex] (c\r) at ($(a\r)+(\r*60:1)$) {};
                            \draw[edge,col5]  (a\r)edge(b\r) (b\r)edge(c\r) (c\r)edge(a\r);
                        }
                        \draw[edge,col\ci] (a1)edge(a2) (a4)edge(a5);
                        \draw[edge,col\cj] (a2)edge(a3) (a5)edge(a6);
                        \draw[edge,col\ck] (a3)edge(a4) (a6)edge(a1);
                        \draw[edge,col\ci] (c1)edge(b2) (c4)edge(b5);
                        \draw[edge,col\cj] (c2)edge(b3) (c5)edge(b6);
                        \draw[edge,col\ck] (c3)edge(b4) (c6)edge(b1);
                        \node[fvertex] (d1) at ($(c1)+(90:1)$) {};
                        \draw[edge,col\ci] (c1)edge(d1) (d1)edge(b2);
                        \node[fvertex] (d2) at ($(c2)+(90+60:1)$) {};
                        \draw[edge,col\cj] (c2)edge(d2) (d2)edge(b3);
                        \begin{pgfonlayer}{background}
                            \foreach \i in {1,2,...,6}
                            {
                                \fill[col5,face] (a\i.center)--(b\i.center)--(c\i.center)--cycle;
                            }
                            \fill[col\ci,face] (b2.center)--(c1.center)--(d1.center)--cycle;
                            \fill[col\cj,face] (b3.center)--(c2.center)--(d2.center)--cycle;
                        \end{pgfonlayer}
                    \end{scope}
                }
            }
            \end{scope}
            \end{scope}
            \fill[white,path fading=east] (-15.1,9)--(-14,9)--(-14,-9)--(-15.1,-9)--cycle;
            \fill[white,path fading=west] (15.1,9)--(14,9)--(14,-9)--(15.1,-9)--cycle;
            \fill[white,path fading=south] (-14,10.1)--(-14,9)--(14,9)--(14,10.1)--cycle;
            \fill[white,path fading=north] (-14,-10.1)--(-14,-9)--(14,-9)--(14,-10.1)--cycle;
            \begin{scope}
                \clip (13.99,9.003)rectangle(15.1,10.1);
                \fill[white,path fading=fade out] (14,9) circle[radius=1.1cm];
            \end{scope}
            \begin{scope}
                \clip (13.99,-9)rectangle(15.1,-10.1);
                \fill[white,path fading=fade out] (14,-9) circle[radius=1.1cm];
            \end{scope}
            \begin{scope}
                \clip (-14.004,9.003)rectangle(-15.1,10.1);
                \fill[white,path fading=fade out] (-14,9) circle[radius=1.1cm];
            \end{scope}
            \begin{scope}
                \clip (-14.004,-9)rectangle(-15.1,-10.1);
                \fill[white,path fading=fade out] (-14,-9) circle[radius=1.1cm];
            \end{scope}
        \end{tikzpicture}
    \caption{Tiling $[3.4.6.4;3^2.4.3.4]$ is a subframework of \Cref{fig:333333-33434-33434}.
    It has 5 \APclasses{}.
    To obtain \CnSymmetric{} flexes with $\Cn$ given for instance
    by 3-fold rotation around the center of a yellow triangle,
    blue, green and red \APclasses{} are merged into a single \CnAPclass{}.
    }
    \label{fig:3464-33434}
\end{figure}

\begin{figure}[H]
    \centering
    \tikzsetnextfilename{33434-3464-3446}
    \begin{tikzpicture}[scale=0.4]
            \begin{pgfonlayer}{background}
                \clip[rounded corners=0.4cm] (-15,10)rectangle ++(30,-20);
            \end{pgfonlayer}

            \begin{scope}
            \clip[rounded corners=0.4cm] (-15,10)rectangle ++(30,-20);
            \begin{scope}
            \foreach \y [evaluate=\y as \xs using {Mod(\y,2)/2-4},evaluate=\xs as \xse using \xs+7] in {-6,...,6}
            {
                \foreach \x [count=\xi,
                             evaluate=\x as \ci using {int(Mod(Mod(\xi+1,2)*2+\y,4)+1)},
                             evaluate=\x as \cj using {int(Mod(\ci+1,4)+1)},
                             evaluate=\x as \ck using {int(Mod(\ci+(-1+Mod(\y+1,2)*2)-1,4)+1)}
                             ] in {\xs,...,\xse}
                {
                    \begin{scope}[shift={($\x*2*(30:1)+\x*2*(-30:1)+\x*(0:1)+\y*(90:1)+\y*0.5*(60:1)+\y*0.5*(120:1)$)}]
                        \node[fvertex] (a1) at (-0.5,0) {};
                        \node[fvertex] (a2) at (0.5,0) {};
                        \node[fvertex] (b1) at ($(a1)+(60:1)$) {};
                        \node[fvertex] (b2) at ($(a1)+(-60:1)$) {};
                        \draw[edge,col5]  (a1)edge(a2) (a1)edge(b1) (a1)edge(b2) (a2)edge(b1) (a2)edge(b2);
                        \node[fvertex] (ra1) at ($(a2)+(30:1)$) {};
                        \node[fvertex] (ra2) at ($(a2)+(-30:1)$) {};
                        \node[fvertex] (rb1) at ($(b1)+(30:1)$) {};
                        \node[fvertex] (rb2) at ($(b2)+(-30:1)$) {};
                        \draw[edge,col\ci] (a2)edge(ra1) (a2)edge(ra2) (ra1)edge(ra2) (b1)edge(rb1) (b2)edge(rb2);
                        \draw[edge,col5] (ra1)edge(rb1) (ra2)edge(rb2);
                        \node[fvertex] (la1) at ($(a1)+(150:1)$) {};
                        \node[fvertex] (la2) at ($(a1)+(210:1)$) {};
                        \node[fvertex] (lb1) at ($(b1)+(150:1)$) {};
                        \node[fvertex] (lb2) at ($(b2)+(210:1)$) {};
                        \draw[edge,col\cj] (a1)edge(la1) (a1)edge(la2) (la1)edge(la2) (b1)edge(lb1) (b2)edge(lb2);
                        \begin{pgfonlayer}{background}
                            \fill[col5,face] (a2.center)--(b1.center)--(a1.center)--(b2.center)--cycle;
                            \fill[col\ci,face] (a2.center)--(ra1.center)--(ra2.center)--cycle;
                            \fill[col\cj,face] (a1.center)--(la1.center)--(la2.center)--cycle;
                        \end{pgfonlayer}
                    \end{scope}
                }
            }
            \end{scope}
            \end{scope}
            \fill[white,path fading=east] (-15.1,9)--(-14,9)--(-14,-9)--(-15.1,-9)--cycle;
            \fill[white,path fading=west] (15.1,9)--(14,9)--(14,-9)--(15.1,-9)--cycle;
            \fill[white,path fading=south] (-14,10.1)--(-14,9)--(14,9)--(14,10.1)--cycle;
            \fill[white,path fading=north] (-14,-10.1)--(-14,-9)--(14,-9)--(14,-10.1)--cycle;
            \begin{scope}
                \clip (13.99,9.003)rectangle(15.1,10.1);
                \fill[white,path fading=fade out] (14,9) circle[radius=1.1cm];
            \end{scope}
            \begin{scope}
                \clip (13.99,-9)rectangle(15.1,-10.1);
                \fill[white,path fading=fade out] (14,-9) circle[radius=1.1cm];
            \end{scope}
            \begin{scope}
                \clip (-14.004,9.003)rectangle(-15.1,10.1);
                \fill[white,path fading=fade out] (-14,9) circle[radius=1.1cm];
            \end{scope}
            \begin{scope}
                \clip (-14.004,-9)rectangle(-15.1,-10.1);
                \fill[white,path fading=fade out] (-14,-9) circle[radius=1.1cm];
            \end{scope}
        \end{tikzpicture}
    \caption{Tiling $[3.3.4.3.4;3.4.6.4;3.4.4.6]$ has 5 \APclasses{}.
    If the origin is in the center of a brown edge incident to two brown triangles,
    then merging green \APclass{} with the red one and yellow with the blue one gives
    $\mathcal{C}_2$-symmetric \APclasses{} and flexes.}
    \label{fig:33434-3464-3446}
\end{figure}
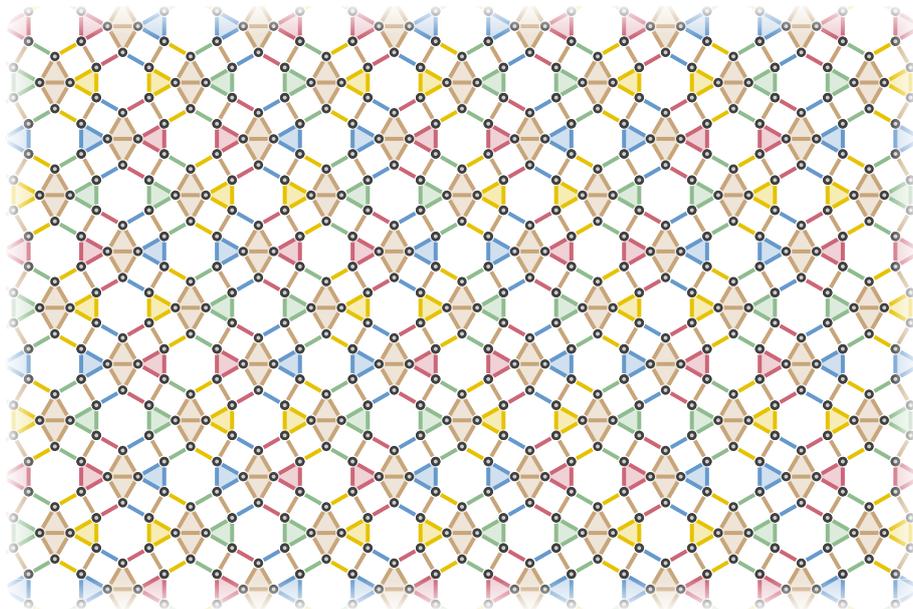

\begin{figure}[H]
    \centering
    \tikzsetnextfilename{3366-3636-666}
    \begin{tikzpicture}[scale=0.4]
            \begin{pgfonlayer}{background}
                \clip[rounded corners=0.4cm] (-15,10)rectangle ++(30,-20);
            \end{pgfonlayer}

            \begin{scope}
            \clip[rounded corners=0.4cm] (-15,10)rectangle ++(30,-20);
            \begin{scope}
            \foreach \y [evaluate=\y as \xs using {Mod(\y,2)/2-2},evaluate=\xs as \xse using \xs+3] in {-4,...,4}
            {
                \foreach \x [count=\xi,
                             evaluate=\x as \ci using {int(Mod(Mod(\xi+1,2)*2+\y,4)+1)},
                             evaluate=\x as \cj using {int(Mod(\ci+1,4)+1)},
                             evaluate=\x as \ck using {int(Mod(\ci+(-1+Mod(\y+1,2)*2)-1,4)+1)}
                             ] in {\xs,...,\xse}
                {
                    \begin{scope}[shift={($\x*5*(30:1)+\x*5*(-30:1)+\y*2.5*(90:1)$)}]
                        \foreach \r [evaluate=\r as \rr using 60*\r-30,evaluate=\r as \cl using {int(5+Mod(Mod(\y,3)+Mod(\r,2),3))},evaluate=\r as \cm using {int(5+Mod(Mod(\y,3)+Mod(\r+1,2),3))}] in {1,2,...,6}
                        {
                            \node[fvertex] (a\r) at (\rr:1) {};
                            \node[fvertex] (b\r) at ($(a\r)+(\rr:1)$) {};
                            \draw[edge,col\cm]  (a\r)edge(b\r);
                            \ifnum\r<4
                                \node[fvertex] (c\r) at ($(b\r)+(\rr:1)$) {};
                                \node[fvertex] (cr\r) at ($(b\r)+(\rr-60:1)$) {};
                                \node[fvertex] (cl\r) at ($(b\r)+(\rr+60:1)$) {};
                                \draw[edge,col\cl]  (b\r)edge(c\r) (b\r)edge(cl\r) (b\r)edge(cr\r) (c\r)edge(cl\r) (c\r)edge(cr\r);
                                \begin{pgfonlayer}{background}
                                    \fill[col\cl,face] (b\r.center)--(cl\r.center)--(c\r.center)--(cr\r.center)--cycle;
                                \end{pgfonlayer}

                            \fi

                        }
                        \draw[edge,col\ci] (a1)edge(a2) (a4)edge(a5);
                        \draw[edge,col\cj] (a2)edge(a3) (a5)edge(a6);
                        \draw[edge,col\ck] (a3)edge(a4) (a6)edge(a1);
                        \node[fvertex] (f1) at ($(cl1)+(90:1)$) {};
                        \draw[edge,col\ci] (cl1)edge(cr2) (cr2)edge(f1) (f1)edge(cl1);
                        \node[fvertex] (f2) at ($(cl2)+(90+60:1)$) {};
                        \draw[edge,col\cj] (cl2)edge(cr3) (cr3)edge(f2) (f2)edge(cl2);
                        \begin{pgfonlayer}{background}
                            \fill[col\ci,face] (cl1.center)--(cr2.center)--(f1.center)--cycle;
                            \fill[col\cj,face] (cl2.center)--(cr3.center)--(f2.center)--cycle;
                        \end{pgfonlayer}
                    \end{scope}
                }
            }
            \end{scope}
            \end{scope}
            \fill[white,path fading=east] (-15.1,9)--(-14,9)--(-14,-9)--(-15.1,-9)--cycle;
            \fill[white,path fading=west] (15.1,9)--(14,9)--(14,-9)--(15.1,-9)--cycle;
            \fill[white,path fading=south] (-14,10.1)--(-14,9)--(14,9)--(14,10.1)--cycle;
            \fill[white,path fading=north] (-14,-10.1)--(-14,-9)--(14,-9)--(14,-10.1)--cycle;
            \begin{scope}
                \clip (13.99,9.003)rectangle(15.1,10.1);
                \fill[white,path fading=fade out] (14,9) circle[radius=1.1cm];
            \end{scope}
            \begin{scope}
                \clip (13.99,-9)rectangle(15.1,-10.1);
                \fill[white,path fading=fade out] (14,-9) circle[radius=1.1cm];
            \end{scope}
            \begin{scope}
                \clip (-14.004,9.003)rectangle(-15.1,10.1);
                \fill[white,path fading=fade out] (-14,9) circle[radius=1.1cm];
            \end{scope}
            \begin{scope}
                \clip (-14.004,-9)rectangle(-15.1,-10.1);
                \fill[white,path fading=fade out] (-14,-9) circle[radius=1.1cm];
            \end{scope}
        \end{tikzpicture}
    \caption{Tiling $[3^2.6^2; 3.6.3.6; 6^3]$ has 7 \APclasses{}.}
    \label{fig:3366-3636-666}
\end{figure}

\begin{figure}[H]
    \centering
    \tikzsetnextfilename{3464-3464-3446}
    \begin{tikzpicture}[scale=0.4]
            \begin{pgfonlayer}{background}
                \clip[rounded corners=0.4cm] (-15,10)rectangle ++(30,-20);
            \end{pgfonlayer}

            \begin{scope}
            \clip[rounded corners=0.4cm] (-15,10)rectangle ++(30,-20);
            \begin{scope}
            \foreach \y [evaluate=\y as \xs using {Mod(\y,2)/2-2},evaluate=\xs as \xse using \xs+3] in {-4,...,4}
            {
                \foreach \x [count=\xi,
                             evaluate=\x as \ci using {int(Mod(Mod(\xi+1,2)*2+\y,4)+1)},
                             evaluate=\x as \cj using {int(Mod(\ci+1,4)+1)},
                             evaluate=\x as \ck using {int(Mod(\ci+(-1+Mod(\y+1,2)*2)-1,4)+1)}
                             ] in {\xs,...,\xse}
                {
                    \begin{scope}[shift={($\x*2*(30:1)+\x*2*(-30:1)+\x*6*(0:1)+\y*(90:1)+\y*(60:1)+\y*(120:1)$)}]
                        \foreach \r [evaluate=\r as \rr using 60*\r-30] in {1,2,...,6}
                        {
                            \node[fvertex] (a\r) at (\rr:1) {};
                            \node[fvertex] (b\r) at ($(a\r)+(-60+\r*60:1)$) {};
                            \node[fvertex] (c\r) at ($(a\r)+(\r*60:1)$) {};
                            \draw[edge,col5]  (a\r)edge(b\r) (b\r)edge(c\r) (c\r)edge(a\r);
                            \node[fvertex] (d\r) at ($(b\r)+(-60+\r*60:1)$) {};
                            \node[fvertex] (e\r) at ($(c\r)+(\r*60:1)$) {};
                            \draw[edge,densely dashed] (b\r)edge(d\r) (c\r)edge(e\r);

                        }
                        \draw[edge,col\ci] (a1)edge(a2) (a4)edge(a5);
                        \draw[edge,col\cj] (a2)edge(a3) (a5)edge(a6);
                        \draw[edge,col\ck] (a3)edge(a4) (a6)edge(a1);
                        \draw[edge,col\ci] (c1)edge(b2) (c4)edge(b5);
                        \draw[edge,col\cj] (c2)edge(b3) (c5)edge(b6);
                        \draw[edge,col\ck] (c3)edge(b4) (c6)edge(b1);
                        \node[fvertex] (f1) at ($(e1)+(90:1)$) {};
                        \draw[edge,col\ci] (e1)edge(d2) (e1)edge(f1) (d2)edge(f1);
                        \node[fvertex] (f2) at ($(e2)+(90+60:1)$) {};
                        \draw[edge,col\cj] (e2)edge(d3) (e2)edge(f2) (d3)edge(f2);
                        \begin{pgfonlayer}{background}
                            \foreach \i in {1,2,...,6}
                            {
                                \fill[col5,face] (a\i.center)--(b\i.center)--(c\i.center)--cycle;
                            }
                            \fill[col\ci,face] (e1.center)--(d2.center)--(f1.center)--cycle;
                            \fill[col\cj,face] (e2.center)--(d3.center)--(f2.center)--cycle;
                        \end{pgfonlayer}
                    \end{scope}
                }
            }
            \end{scope}
            \end{scope}
            \fill[white,path fading=east] (-15.1,9)--(-14,9)--(-14,-9)--(-15.1,-9)--cycle;
            \fill[white,path fading=west] (15.1,9)--(14,9)--(14,-9)--(15.1,-9)--cycle;
            \fill[white,path fading=south] (-14,10.1)--(-14,9)--(14,9)--(14,10.1)--cycle;
            \fill[white,path fading=north] (-14,-10.1)--(-14,-9)--(14,-9)--(14,-10.1)--cycle;
            \begin{scope}
                \clip (13.99,9.003)rectangle(15.1,10.1);
                \fill[white,path fading=fade out] (14,9) circle[radius=1.1cm];
            \end{scope}
            \begin{scope}
                \clip (13.99,-9)rectangle(15.1,-10.1);
                \fill[white,path fading=fade out] (14,-9) circle[radius=1.1cm];
            \end{scope}
            \begin{scope}
                \clip (-14.004,9.003)rectangle(-15.1,10.1);
                \fill[white,path fading=fade out] (-14,9) circle[radius=1.1cm];
            \end{scope}
            \begin{scope}
                \clip (-14.004,-9)rectangle(-15.1,-10.1);
                \fill[white,path fading=fade out] (-14,-9) circle[radius=1.1cm];
            \end{scope}
        \end{tikzpicture}
    \caption{Tiling $[3.4.6.4;3.4.6.4;3.4.4.6]$ has infinitely many \APclasses{} --- the dashed edges do not belong to the same class,
    but each \APclass{} is formed by edges intersected by a line orthogonal to them.}
    \label{fig:infinitelyMany}
\end{figure}
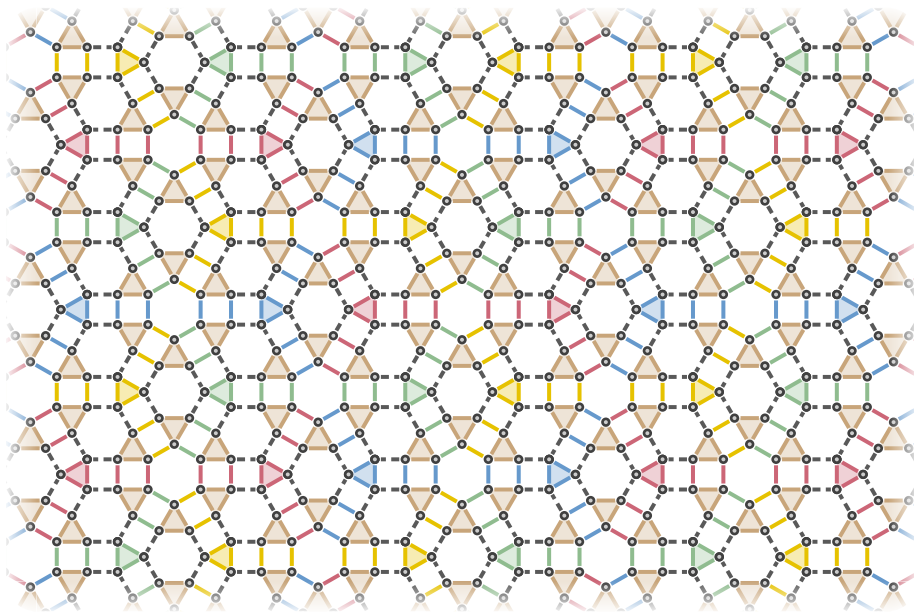

\tikzexternaldisable

\addcontentsline{toc}{section}{Acknowledgments}
\section*{Acknowledgments}
Georg Grasegger was supported by the Austrian Science Fund (FWF): I6233. For the purpose of open access, the authors have applied a CC BY public copyright license to any Author Accepted Manuscript version arising from this submission.
Jan Legerský was funded by the Czech Science Foundation (GAČR) project 22-04381L.
We would like to thank Walter Whiteley for pointing out the poster of his undergraduate students \cite{poster}
and for the discussions on the topic.

\phantomsection
\addcontentsline{toc}{section}{References}
\bibliographystyle{plainurl}
\bibliography{biblio.bib}

\appendix

\section{Verification of walk-independence}
In this section, we show how the walk-independence of a finite framework $(G,\rho)$
can be checked.
Let $T$ be a spanning tree of $G$.
For $e\in E_G\setminus E_T$, we define $C_e^T$ to be the unique cycle in $T+e$.
There are exactly $|E_G|-|V_G|+1$ such cycles.

\begin{proposition}
    \label{prop:zeroSumCheck}
    Let $(G,\rho)$ be a framework such that $\rho$ is a parallelogram placement.
    Let $T$ be a spanning tree of $G$.
    The framework is walk-independent if and only if \Cref{eq:zeroSumAPclass}
    holds for all cycles $C_e^T$.
\end{proposition}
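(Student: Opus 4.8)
The plan is to prove the non-trivial implication, from the condition on the fundamental cycles $C_e^T$ to walk-independence; the converse is immediate, since each $C_e^T$ is in particular a closed walk, so if $(G,\rho)$ is walk-independent then \Cref{eq:zeroSumAPclass} holds for it.

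For the converse direction, fix an \APclass{} $r$ and associate to it the antisymmetric vector-valued function $g_r$ on oriented edges given by $g_r(u,v)=\rho(v)-\rho(u)$ if $uv\in r$ and $g_r(u,v)=(0,0)$ otherwise, so that $g_r(v,u)=-g_r(u,v)$. For any closed walk $C=(u_1,\dots,u_k=u_1)$ the left-hand side of \Cref{eq:zeroSumAPclass} equals $\sum_{i=1}^{k-1} g_r(u_i,u_{i+1})$. By antisymmetry of $g_r$, this value depends only on the integral $1$-chain $[C]:=\sum_{i=1}^{k-1}(u_i,u_{i+1})$ (any backtracking in the walk cancels), and since $C$ is closed, $[C]$ is a cycle, i.e.\ it lies in the integral cycle space $Z_1(G)=\ker\partial_1$. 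Extending $g_r$ additively, we may regard $C\mapsto\sum_{i} g_r(u_i,u_{i+1})$ as the restriction to closed walks of a homomorphism $Z_1(G)\to\RR^2$.

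Now I would invoke the standard fact that the fundamental cycles $\{[C_e^T]:e\in E_G\setminus E_T\}$ form a $\ZZ$-basis of $Z_1(G)$: they are linearly independent because $C_e^T$ is the only one of them using the non-tree edge $e$, and every element of $Z_1(G)$ differs from a suitable integer combination of them by a $1$-chain supported on the tree $T$, which must vanish. Writing $[C]=\sum_{e\notin E_T}\lambda_e[C_e^T]$ with $\lambda_e\in\ZZ$ and using the homomorphism property,
\[
    \sum_{\oriented{u}{v}\in r\cap C}(\rho(v)-\rho(u)) = \sum_{e\notin E_T}\lambda_e \sum_{\oriented{u}{v}\in r\cap C_e^T}(\rho(v)-\rho(u)) = (0,0)
\]
by the assumption on the cycles $C_e^T$. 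As $r$ and $C$ were arbitrary and $\rho$ is a parallelogram placement by hypothesis, $(G,\rho)$ is walk-independent. The only steps needing care are the passage from the edge sum in \Cref{eq:zeroSumAPclass} to the evaluation of $g_r$ on the $1$-chain $[C]$ (which rests exactly on $g_r(v,u)=-g_r(u,v)$) and the basis property of fundamental cycles; neither poses a real difficulty, so I expect no substantial obstacle.
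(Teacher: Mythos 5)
Your proof is correct and takes essentially the same route as the paper's: both reduce the sum in \Cref{eq:zeroSumAPclass} to a linear functional evaluated on the chain/circulation determined by the closed walk, and then expand that object in the fundamental cycle basis $\{C_e^T\}$ to conclude by linearity. The only cosmetic difference is that you work with integral $1$-chains in $Z_1(G)$ and sketch the $\ZZ$-basis property yourself, whereas the paper phrases the same argument with $\QQ$-valued circulations and cites the fundamental cycle basis fact.
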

In order to prove the proposition, we exploit the idea of a \emph{cycle basis}.
We recall the necessary notions as simple as possible.
See for instance~\cite{Kavitha2009} for more details on the topic.
\begin{definition}
    Let $\Goriented=(V_G,A)$ be an orientation of a finite graph $G$.
    A vector $\circulation{C} = (\circulation{C}(u,v))_{(u,v)\in A}\in \QQ^A$ is called a \emph{circulation}
    if for every $v\in V_G$ the \emph{conservation law} holds:
    \begin{equation*}
        \sum_{(u,v)\in A} \circulation{C}(u,v) = \sum_{(v,u)\in A} \circulation{C}(v,u)\,.
    \end{equation*}
    For a closed walk $C=(v_1,\ldots,v_\ell)$ in $G$, we define a circulation
    $\circulation{C}\in \QQ^A$ by
    \begin{equation*}
        \circulation{C}(w,w') = \sum_{\substack{(v_i,v_{i+1})\in C\\(v_i,v_{i+1})=(w,w')}}\!\!1\ - \sum_{\substack{(v_i,v_{i+1})\in C\\(v_i,v_{i+1})=(w',w)}}\!\!1\,.
    \end{equation*}
\end{definition}
The vector $\circulation{C}$ is indeed a circulation:
the conservation law at vertex $w$ is equivalent to
\begin{align*}
    &\sum_{(u,w)\in A} \sum_{\substack{(v_i,v_{i+1})\in C\\(v_i,v_{i+1})=(u,w)}}\!\!1\ + \sum_{(w,u)\in A} \sum_{\substack{(v_i,v_{i+1})\in C\\(v_i,v_{i+1})=(u,w)}}\!\!1\\
    = &\sum_{(u,w)\in A} \sum_{\substack{(v_i,v_{i+1})\in C\\(v_i,v_{i+1})=(w,u)}}\!\!1\ + \sum_{(w,u)\in A} \sum_{\substack{(v_i,v_{i+1})\in C\\(v_i,v_{i+1})=(w,u)}}\!\!1\,,
\end{align*}
where the left, resp.\ right, hand side gives the count on how many times the walk $C$
enters, resp.\ leaves, the vertex $w$.
Obviously, these two numbers are equal as $C$ is closed.

\begin{proof}[Proof of \Cref{prop:zeroSumCheck}]
    The ``only if'' part of the statement is trivial.

    Suppose that \Cref{eq:zeroSumAPclass} holds for all cycles $C_e^T$,
    we denote them by $C_1, \ldots, C_k$, where $k=|E_G|-|V_G|+1$.
    We fix an orientation $\Goriented=(V_G,A)$ of $G$.
    The circulations $\circulation{C}_1, \ldots, \circulation{C}_k$
    form a so called \emph{fundamental cycle basis} of the
    vector space of all circulations of $\Goriented$ with element-wise addition
    and scalar multiplication over $\QQ$ \cite{Kavitha2009}.
    Hence, for a closed walk $C$ in $G$,
    we have $\circulation{C}=\sum_{i=1}^k \alpha_i \circulation{C}_i$
    for some $\alpha_i\in \QQ$.
    Now, for an \APclass{} $r$, we have
    \begin{align*}
		\sum_{\oriented{u}{v} \in r\cap C} (\rho(v)-\rho(u))
		    &= \sum_{\oriented{u}{v} \in r\cap A} \circulation{C}(u,v)(\rho(v)-\rho(u))\\
		    &= \sum_{i=1}^k\alpha_i \sum_{\oriented{u}{v} \in r\cap A} \circulation{C}_i(u,v)(\rho(v)-\rho(u))\\
		    &= \sum_{i=1}^k\alpha_i \sum_{\oriented{u}{v} \in r\cap C_i} (\rho(v)-\rho(u)) = (0,0)\,.\qedhere
    \end{align*}
\end{proof}
Regarding the complexity of checking the walk-independence,
suppose we have $(G,\rho)$ with $n=|V_G|$, $m=|E_G|$, and with edges labeled by \APclasses{} they belong to.
Let $a$ denote the number of \APclasses{}.
A spanning tree $T$ can be found in ${O(m+n)=O(m)}$.
We fix a vertex $w$.
For every \APclass{} $r$ and vertex $w'$,
we compute the vector
\begin{align*}
	z_r(w') = \sum_{\oriented{u}{v} \in r\cap W} (\rho(v)-\rho(u))\,,
\end{align*}
where $W$ is the walk from $w$ to $w'$ in $T$.
These values can be obtained by traversing $T$,
namely, in $O(an)$ operations.
Now, the condition of the walk-independence holds for cycle $C_e^T$,
$e=uv$ belonging to the \APclass{} $r$,
if and only if $z_r(v)-z_r(u) = \rho(v) - \rho(u)$.
There are $O(m)$ cycles to check.
Hence, in total the complexity is $O(an + m)$.
Notice that $a\leq n-1$ if the walk-independence holds:
if there was an edge in $E_G\setminus E_T$ belonging to an \APclass{}
which dose not occur in $T$,
then the walk-independence is violated.
But $T$ has only $n-1$ edges.

\end{document}